\newtheorem{theorem}{Theorem}[section]
\newdefinition{definition}[theorem]{Definition}
\newtheorem{proposition}[theorem]{Proposition}
\newtheorem{corollary}[theorem]{Corollary}
\newtheorem{lemma}[theorem]{Lemma}
\newtheorem{remark}[theorem]{Remark}
\begin{document}

\title{\bf A functional-analytic construction of the stochastic parallel transport in Hermitian bundles over Riemannian manifolds}
\author{Alexandru Mustăţea}
\ead{Alexandru.Mustatea@imar.ro}
\address{"Simion Stoilow" Institute of Mathematics of the Romanian Academy, P.O. Box 1-764, Bucharest, RO-014700, Romania}
\date{}

\begin{abstract}
This article presents a purely functional-analytic construction of the concept of stochastic parallel transport in Hermitian bundles over Riemannian manifolds. As a byproduct, we also obtain a form of the Feynman-Kac formula in vector bundles that is, to our best knowledge, the most general found so far.
\end{abstract}

\begin{keyword}
stochastic calculus \sep parallel transport \sep Hermitian bundle \sep Wiener measure \sep Riemannian manifold
\end{keyword}
\maketitle

\section{Motivation and outline of this article}

The concept of "stochastic parallel transport" in a vector bundle $E$ over a Riemannian manifold $M$ is usually presented as a byproduct of the concept of "stochastic differential equation"; this is the approach taken in most texts, for instance in \cite{IW89} and in \cite{Meyer82}. Nevertheless, K. It\^{o} had originally conceived it differently (\cite{Ito63}, \cite{Ito75a}, \cite{Ito75b}): for every continuous curve $c : [0,t] \to M$, consider the unique geodesic segment joining the consecutive "dyadic" points $c (\frac {jt} {2^k})$ and $c (\frac {(j+1)t} {2^k})$, join these $2^k$ geodesic segments into a single zig-zag piecewise-geodesic line, and parallel-transport the vector $v \in E_{c(0)}$ along this line to $E_{c(t)}$; for Wiener-almost all continuos curves $c$, the limit when $k \to \infty$ will exist and will be called "the stochastic parallel transport of $v$ along $c$". Both approaches are equivalent, as shown in \cite{Meyer82} and \cite{Emery90}, and both are constructed within the framework of probability theory, therefore being accessible mostly to probabilists. The aim of this article is to reconstruct the concept of "stochastic parallel transport" using only functional-analytic tools and concepts, thus opening it up to a much larger class of mathematicians.

Since the constructions in this text will be fairly technical, let us sketch the intuition underpinning them. Let $D_t = \{ \frac {jt} {2^k} \mid k \in \mathbb N, \ j \in \mathbb N \cap [0,2^k] \}$ - the "dyadic" numbers between $0$ and $t$. Following It\^{o}'s idea, the parallel transport of $v \in E_{c(0)}$ along the zig-zag line determined by the points $\{ c(0), c(\frac t {2^k}), \dots, c (\frac {(2^k-1)t} {2^k}), c(t) \}$ is the parallel transport $T_{k,0}$ from $c(0)$ to $c(\frac t {2^k})$, followed by the parallel transport $T_{k,1}$ from $c(\frac t {2^k})$ to $c(\frac {2t} {2^k})$ and so on, ending with the parallel transport $T_{k, 2^k-1}$ from $c(\frac {(2^k-1)t} {2^k})$ to $c(t)$; symbolically, it is $T_{k, 2^k-1} \dots T_{k,0} v$. Now comes the remark that is the backbone of the present work: $T_{k, 2^k-1} \dots T_{k,0} v$ can be viewed as the "contraction" of all the tensor products in
\begin{align*}
T_{k, 2^k-1} \otimes \dots \otimes T_{k,0} \otimes v & \in \left( E_{c(t)} \otimes E_{c(\frac {(2^k-1)t} {2^k})} ^* \right) \otimes \dots \otimes \left( E_{c(\frac t {2^k})} \otimes E_{c(0)} ^* \right) \otimes E_{c(0)} \simeq \\
& \simeq E_{c(t)} \otimes \left( E_{c(\frac {(2^k-1)t} {2^k})} ^* \otimes E_{c(\frac {(2^k-1)t} {2^k})} \right) \otimes \dots \otimes \left( E_{c(0)} ^* \otimes E_{c(0)} \right) \simeq \\
& \simeq E_{c(t)} \otimes \operatorname{End} E_{c(\frac {(2^k-1)t} {2^k})} ^* \otimes \dots \otimes \operatorname{End} E_{c(0)} ^* \ .
\end{align*}
Let us see now what "contraction" means. If $U_1, \dots, U_N$ are finite-dimensional vector spaces, if $u \in U_N$ and $\omega \in U_1 ^*$, and $A_j : U_{j+1} \to U_j$ is a linear operator for all $1 \le j \le N-1$, then
\[ \omega \otimes A_1 \otimes \dots A_{N-1} \otimes u \in U_1 ^* \otimes (U_1 \otimes U_2 ^*) \otimes \dots \otimes (U_{N-1} \otimes U_N ^*) \otimes U_N \simeq \operatorname{End} U_1^* \otimes \dots \otimes \operatorname{End} U_N ^* \ ; \]
if $\operatorname{Id} _{U_j}$ is the identity operator on $U_j$, then $\operatorname{Id} _{U_1} \otimes \dots \otimes \operatorname{Id} _{U_N} \in \operatorname{End} U_1 \otimes \dots \otimes \operatorname{End} U_N$, therefore it makes sense to apply $\omega \otimes A_1 \otimes \dots \otimes A_N \otimes u$ on $\operatorname{Id} _{U_1} \otimes \dots \otimes \operatorname{Id} _{U_N}$, the result being $\omega (A_1 \dots A_N u)$. We see that in order to perform this contraction in the product of parallel transports considered above we need to add in a supplementary factor $E_{c(t)} ^*$ with which to pair the factor $E_{c(t)}$ in order to obtain $\operatorname{End} E_{c(t)} ^*$ and be able to perform the contraction described above. This means that if $\eta_{c(t)} \in E_{c(t)}$, then
\[ \eta_{c(t)} \otimes T_{k, 2^k-1} \otimes \dots \otimes T_{k,0} \otimes v \in \operatorname{End} E_{c(t)} ^* \otimes \dots \otimes \operatorname{End} E_{c(0)} ^* \]
and
\[ \eta_{c(t)} (T_{k, 2^k-1} \dots T_{k,0} v) = (\eta_{c(t)} \otimes T_{k, 2^k-1} \otimes \dots \otimes T_{k,0} \otimes v) (\operatorname{Id} _{E_{c(t)}} \otimes \dots \otimes \operatorname{Id} _{E_{c(0)}}) \ .\]
Following now in the footsteps of Itô, we let $k \to \infty$; what we get, then, will be a contraction between tensor products with infinitely many factors; the rigorous construction of these tensor products will be our first task, but we can say that these tensor product spaces will be $\mathcal E _c = \otimes _{s \in D_t} \operatorname{End} E_{c(s)}$ and its dual. If we denote the space of continuous curves by $\mathcal C_t$, the fact that $\mathcal E_c$ depends on $c \in \mathcal C_t$ suggests that the disjoint union $\coprod _{c \in \mathcal C_t} \mathcal E _c$ will be a (topological) vector bundle of infinite rank over $\mathcal C_t$. Since $\eta_{c(t)} \otimes T_{k, 2^k-1} \otimes \dots \otimes T_{k,0} \otimes v$ takes values in the fiber $\mathcal E_c ^*$ for all $k \in \mathbb N$ and all $c \in \mathcal C_t$, we deduce that these tensor products will all be some kind of sections in $\mathcal E ^*$, whence it is reasonable to assume that their limit for $k \to \infty$ (the stochastic parallel transport, once we get rid of $\eta$) will be a section of the same kind. Indeed, this will turn out to be the case, and in order to obtain this we shall resort to Chernoff's theorem about the approximation of contraction semigroups.

An unexpected byproduct of the construction in this article is a new version of the Feynman-Kac formula in vector bundles: not only will its proof be completely new, but its hypotheses seem to be the most general considered so far in the literature, to the author's best knowledge; more precisely, the potential will be taken to be only locally-integrable and lower-bounded, while no restrictions will be imposed upon the manifold.

The plan of the article is the following, the notations going to be explained as soon as they become necessary:
\begin{itemize}[wide]
\item we shall construct a Hermitian vector bundle $\mathcal E$ over $\mathcal C_t$, the fibers of which will be infinite-dimensional Hilbert spaces;
\item we shall consider spaces of square-integrable sections in $\mathcal E$ and $\mathcal E ^*$ and, in particular, we shall obtain by an abstract argument a specific essentially bounded section $\rho_{t, \omega, \eta}$, which will be the limit of a sequence of sections $(P_{t, \omega, \eta, k}) _{k \in \mathbb N}$ given by explicit formulae;
\item we shall emphasize a conjugate-linear continuous map $\mathcal P _{t,v} ^2 : \Gamma^2 (\mathcal E) \to \Gamma^2 (p_t ^* E)$, which we shall see to enclose a lot of information about both the geometry of the bundle $E \to M$ and the Wiener measure $w_t$ on $\mathcal C_t$;
\item using the map $\mathcal P _{t,v} ^2$ we shall be able to give meaning to the concept of stochastic parallel transport from a functional-analytic point of view;
\item finally, using the same map $\mathcal P _{t,v} ^2$, we shall study an extension of the Feynman-Kac formula in the bundle $E$.
\end{itemize}

Finally, it is a pleasure to thank Mr. Radu Purice of the "Simion Stoilow" Institute of Mathematics of the Romanian Academy for his constant moral and mathematical support offered during the elaboration of the present work. His seemingly infinite patience in reading the successive draft versions of this article has helped finding and eliminating many errors, and the discussions with him helped me understand the correct mathematical setting in which to place the problem discussed here such that its solution ended up emerging naturally.

\section{A Hermitian vector bundle of infinite rank}

In the following, $M$ will be a separable connected Riemannian manifold of dimension $n$, and $x_0 \in M$ some fixed arbitrary point. We shall denote by $d : M \times M \to [0, \infty)$ the distance induced on $M$ by the Riemannian structure.

If $t>0$, we shall repeatedly make use of the space
\[ \mathcal C _t = \{ c:[0,t] \to M \mid c \text{ is continuous, with } c(0) = x_0 \} \ , \]
that we shall endow with the topology given by the distance $D(c_1, c_2) = \max _{s \in [0,t]} d(c_1(s), c_2(s))$ and with the natural Wiener measure $w_t$ (a non-probabilistic, functional-analytic and geometric construction of the latter may be found in \cite{BP11}). It is known that $\mathcal C_t$ endowed with this topology is separable (see \cite{Michael61}).

Since we shall be working with various Banach or Hilbert spaces, the norm and the Hermitian product on each of them will be displayed as a lower index: if $v, w \in X$, then $\| v \| _X$ will be the norm of $v$ and $\langle v, w \rangle _X$ will be the Hermitian product of $v$ and $w$. For bounded linear operators between normed spaces, $\| \cdot \|_{op}$ will denote the operator norm, without us specifying the spaces when they are clear from the context.

Let $E \to M$ be a Hermitian vector bundle of complex rank $r \in \mathbb N$, endowed with a Hermitian connection $\nabla$. The fiber of $E$ over $x \in M$ will be denoted by $E_x$, and the Hermitian product on it will be $\langle \cdot, \cdot \rangle _{E_x}$ (all the Hermitian products used in this text will be linear in the first argument).

Let $D_t = \{ \frac {jt} {2^k} \mid k \in \mathbb N, \ j \in \mathbb N \cap [0,2^k] \}$ - the "dyadic" numbers between $0$ and $t$. Our purpose in this section is to give meaning to the Hermitian vector bundle described intuitively by $\mathcal E = \bigboxtimes _{s \in D_t} \operatorname{End} E \to \mathcal C_t$. If $c \in \mathcal C_t$ we let the fiber $\mathcal E _c$ of $\mathcal E$ over $c$ be $\bigotimes _{s \in D_t} \operatorname{End} E_{c(s)}$. This is a tensor product of countably many factors, the definition of which is not trivial and deserves some clarifications. As such, we endow the space $\operatorname{End} E_x$ with the Hermitian product given by $\langle A, B \rangle _{\operatorname{End} E_x} = \frac 1 r \operatorname{Trace} (A B^*)$ for $A, B \in \operatorname{End} E_x$, for every $x \in M$. Notice that $\langle \cdot , - \rangle _{\operatorname{End} E_x} = \frac 1 r \langle \cdot , - \rangle _{E_x \otimes E_x ^*}$, the Hermitian product on the right-hand side being the natural one on $E_x \otimes E_x ^*$. If $\operatorname{Id} _{E_x} \in \operatorname{End} E_x$ is the identity operator, then $\| \operatorname{Id} _{E_x} \| _{\operatorname{End} E_x} = 1$. This allows us to construct the tensor product $\mathcal E _c$ rigorously as follows. If $D_{t,k} = \{ \frac {jt} {2^k} \mid j \in \mathbb N \cap [0,2^k] \}$ for each $k \in \mathbb N$, then for every $k \le k'$ we identify the tensor monomial $\otimes _{s \in D_{t,k}} e_{c(s)} \in \bigotimes _{s \in D_{t,k}} \operatorname{End} E_{c(s)}$ with the monomial $\otimes _{s \in D_{t,k'}} e'_{c(s)} \in \bigotimes _{s \in D_{t,k'}} \operatorname{End} E_{c(s)}$ in which $e'_{c(s)} = e_{c(s)}$ for $s \in D_{t,k}$ and $e'_{c(s)} = \operatorname{Id} _{E_{c(s)}}$ for $s \in D_{t,k'} \setminus D_{t,k}$. This procedure identifies the space $\bigotimes _{s \in D_{t,k}} \operatorname{End} E_{c(s)}$ with a subspace of $\bigotimes _{s \in D_{t,k'}} \operatorname{End} E_{c(s)}$, which allows us to consider the algebraic inductive limit $\varinjlim _{k \in \mathbb N} \bigotimes _{s \in D_{t,k}} \operatorname{End} E_{c(s)}$. This being the algebraic inductive limit of finite tensor products of finite-dimensional Hilbert spaces, it will carry a natural Hermitian product; one then considers the Hilbert space completion of the algebraic inductive limit with respect to this Hermitian product, the resulting Hilbert space being denoted $\mathcal E _c$. It is important to notice that $\mathcal E _c$ is separable because the index set in the inductive limit is $\mathbb N$ and each space in the inductive limit is finite-dimensional.

We define now the total space of the putative Hermitian vector bundle as $\mathcal E = \bigcup _{c \in \mathcal C_t} \{ c \} \times \mathcal E _c$. The natural projection of $\mathcal E$ onto $\mathcal C_t$ will be $\operatorname{pr}_{\mathcal E} : \mathcal E \to \mathcal C_t$, given by the projection $\operatorname{pr}_{\mathcal E} ((c,e)) = c$. So far, $\mathcal E$ has been constructed fiberwise only as a set; in what follows, we shall endow it with a topology, but in order to do this we shall need an auxiliary result.

\begin{lemma} \label{approximation of continuous curves}
For every continuous curve $c \in \mathcal C_t$ and every $\varepsilon > 0$ there exists a piecewise-smooth curve $c' : [0,t] \to M$ such that $D(c, c') < \varepsilon$.
\end{lemma}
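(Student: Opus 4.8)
The plan is to reduce the statement to a purely local problem inside coordinate charts, replacing $c$ on short subintervals by curves that are straight line segments when read in coordinates. The only genuine subtlety is that $M$ is not assumed to be complete, so one cannot join points by globally defined geodesics and must keep all constructions inside individual charts.

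First I would note that $c([0,t])$ is compact, being the continuous image of the compact interval $[0,t]$, and cover it by finitely many coordinate charts $(U_i, \varphi_i)$, $1 \le i \le m$, chosen so that each $\varphi_i(U_i)$ is an open ball of $\mathbb{R}^n$ (hence convex) and so that, on a slightly smaller closed coordinate ball, the Riemannian metric tensor read in the coordinates $\varphi_i$ is comparable to the Euclidean one. This comparison yields constants $b_i > 0$ such that $d(x,y) \le b_i \, |\varphi_i(x) - \varphi_i(y)|$ for $x,y$ in that smaller ball: indeed, the coordinate straight segment joining $x$ and $y$ stays in the convex ball, so it is an admissible path for the infimum defining the intrinsic distance $d$, and its Riemannian length is at most $b_i$ times its Euclidean length. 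I would then set $b = \max_i b_i$.

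Next, using the Lebesgue number of this finite cover together with the uniform continuity of $c$ on $[0,t]$, I would pick a partition $0 = s_0 < s_1 < \dots < s_N = t$ fine enough that each arc $c([s_{i-1}, s_i])$ has $d$-diameter small enough to lie inside a single smaller coordinate ball and to have coordinate image of Euclidean diameter at most $\varepsilon/(2b)$. On each subinterval I define $c'$ to be the $\varphi_i$-preimage of the straight segment joining $\varphi_i(c(s_{i-1}))$ and $\varphi_i(c(s_i))$; by convexity this segment never leaves the coordinate ball, so $c'$ is well defined and smooth there, consecutive pieces agree at the nodes $c(s_i)$, and $c'$ is therefore continuous, piecewise-smooth on $[0,t]$, and satisfies $c'(0) = c(0) = x_0$.

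Finally I would check the uniform estimate. For $s \in [s_{i-1}, s_i]$, both $\varphi_i(c(s))$ and $\varphi_i(c'(s))$ lie in the convex hull of $\varphi_i(c([s_{i-1}, s_i]))$, whose diameter equals that of the set itself and is thus at most $\varepsilon/(2b)$; hence $d(c(s), c'(s)) \le b \, |\varphi_i(c(s)) - \varphi_i(c'(s))| \le \varepsilon/2$, and taking the maximum over $s \in [0,t]$ gives $D(c, c') \le \varepsilon/2 < \varepsilon$. The step I expect to demand the most care is precisely this distance comparison: because $d$ is an infimum over paths in all of $M$, one must confine attention to compact balls strictly inside each chart and exploit the convexity of the coordinate balls to guarantee that the approximating segments never escape, so that the elementary Euclidean estimates genuinely transfer to the intrinsic distance $d$.
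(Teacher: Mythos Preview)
Your argument is correct and complete in all essentials; the only step you gloss over---controlling the Euclidean diameter of $\varphi_i(c([s_{i-1},s_i]))$ by the mesh of the partition---follows at once from the uniform continuity of each $\varphi_i\circ c$ (finitely many charts), or equivalently from the reverse metric comparison on the smaller closed balls.

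Your route, however, is genuinely different from the paper's. The paper interpolates $c$ by \emph{minimizing geodesics} between consecutive dyadic points $c(jt/2^k)$: since the injectivity radius is continuous and strictly positive, it is bounded away from zero on the compact set $c([0,t])$, so for $k$ large the consecutive points are close enough to be joined by unique minimizing geodesics, and a triangle-inequality estimate gives $D(c,c')\le 2\delta(t/2^k)$ where $\delta$ is a modulus of continuity for $c$. Your worry that non-completeness forbids geodesic interpolation is thus unfounded---only local existence is used---but your chart-based argument is nonetheless valid and arguably more elementary, since it avoids any appeal to the exponential map or injectivity radius. What the paper's approach buys is that it uses precisely the dyadic partitions $D_{t,k}$ that recur throughout the construction of the stochastic parallel transport, so the approximating curves produced here are exactly the zig-zag geodesic lines appearing later; your coordinate-segment curves would serve the lemma equally well but do not feed into the subsequent arguments.
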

\begin{proof}
Let $c \in \mathcal C_t$. The idea of the proof is the following: if $k \in \mathbb N$ is large enough, then the points $c(0), c(\frac t {2^k}), \dots, c(\frac {(2^k-1)t} {2^k}), c(t)$ will be close enough to each other so that any two consecutive of them may be joined by a unique minimizing geodesic; by joining these geodesic segments together, we shall obtain a piecewise-smooth curve $c'$ (a geodesic interpolation of the $2^k+1$ points above) which, for sufficiently large $k$, will be at distance at most $\varepsilon$ from $c$. The rest of the proof formalizes this idea rigourously.

Being defined on a compact interval, $c$ will be uniformly continuous; let $\delta$ be an increasing modulus of continuity for it. Since
\[ d \left( c \left( \frac {jt} {2^k} \right), c \left( \frac {(j+1)t} {2^k} \right) \right) \le \delta \left( \frac t {2^k} \right) \to 0 \ , \]
for every $0 \le j \le 2^k - 1$, we deduce that for large enough $k \in \mathbb N$ the points $c(\frac {jt} {2^k})$ and $c(\frac {(j+1)t} {2^k})$ may be joined by a unique minimizing geodesic $\gamma_{k,j} : [0,1] \to M$ with $\gamma_{k,j} (0) = c(\frac {jt} {2^k})$ and $\gamma_{k,j} (1) = c(\frac {(j+1)t} {2^k})$, for all $0 \le j \le 2^k - 1$. Let us consider the piecewise-geodesic curve $c' : [0,t] \to M$ obtained by gluing these geodesic segments together: on evey interval $[\frac {jt} {2^k}, \frac {(j+1)t} {2^k}]$ it will be given by $c' (s) = \gamma_{k,j} (\frac {2^k} t s - j)$, for all $0 \le j \le 2^k - 1$. Using the triangle inequality in the triangle of vertices $c(\frac {jt} {2^k})$, $c(s)$ and $c'(s)$ for $s \in [\frac {jt} {2^k}, \frac {(j+1)t} {2^k}]$, let us notice that
\begin{align*}
D(c, c') & = \max_{0 \le j \le 2^k - 1} \max_{\frac {jt} {2^k} \le s \le \frac {(j+1)t} {2^k}} d \left( c(s), \gamma_{k,j} \left( \frac {2^k} t s - j \right) \right) \le \\
& \le \max_{0 \le j \le 2^k - 1} \max_{\frac {jt} {2^k} \le s \le \frac {(j+1)t} {2^k}} d \left( c(s), c \left( \frac {jt} {2^k} \right) \right) + d \left( c \left( \frac {jt} {2^k} \right), \gamma_{k,j} \left( \frac {2^k} t s - j \right) \right) \le \\
& \le \max_{0 \le j \le 2^k - 1} \max_{\frac {jt} {2^k} \le s \le \frac {(j+1)t} {2^k}} d \left( c(s), c \left( \frac {jt} {2^k} \right) \right) + d \left( c \left( \frac {jt} {2^k} \right), c \left( \frac {(j+1)t} {2^k} \right) \right) \le \\
& \le \max_{0 \le j \le 2^k - 1} \max_{\frac {jt} {2^k} \le s \le \frac {(j+1)t} {2^k}} \delta \left( s - \frac {jt} {2^k} \right) + \delta \left( \frac t {2^k} \right) \le 2 \delta \left( \frac t {2^k} \right) \, 
\end{align*}
whence it follows that $D(c, c') < \varepsilon$ for large enough $k$.
\end{proof}

We shall construct the topology on $\mathcal E$ first locally, on the restrictions of $\mathcal E$ to open balls $B(c,r)$ centered at each curve $c \in \mathcal C_t$, and then we shall show that all these local topologies are compatible with each other, which will allow us to glue them together into a global topology on $\mathcal E$.

%Pornim de la intuiția că fibra tip a lui $\mathcal E$ este $\bigotimes _{s \in [0,t]} \operatorname{End} E_{x_0} = (\operatorname{End} E_{x_0}) ^{\otimes [0,t]}$. În jurul fiecărei curbe $c \in \mathcal C_t$ vom construi niște bile deschise $B(c, r)$ și niște bijecții $\varphi : \operatorname{pr}_{\mathcal E}^{-1} (B(c,r)) \to B(c, r) \times (\operatorname{End} E_{x_0}) ^{\otimes [0,t]}$ prin inversele cărora vom transporta topologia de pe $B(c, r) \times (\operatorname{End} E_{x_0}) ^{\otimes [0,t]}$ înapoi pe $\operatorname{pr}_{\mathcal E}^{-1} (B(c,r))$. Cum bilele de forma $B(c, r)$ vor acoperi $\mathcal C_t$, vom arăta că pe intersecțiile de două asemenea bile topologiile introduse mai sus pe preimaginile lor prin proiecția $\operatorname{pr}_{\mathcal E}$ coincid. Astfel, toate aceste topologii definite local se vor putea lipi într-o singură topologie pe $\mathcal E$, pentru care aplicațiile $\varphi$ de mai sus vor deveni trivializări locale.

For every $r \in (0, \min_{s \in [0,t]} \operatorname{injrad} (c(s)))$ consider the open metric ball $B(c, r) = \{ \gamma \in \mathcal C_t \mid D(c, \gamma) < r \}$ and a piecewise-smooth curve $c' \in B(c, r)$, the existence of which being guaranteed by lemma \ref{approximation of continuous curves}. If $\gamma \in B(c,r)$ and $s \in [0,t]$ then
\[ d(\gamma(s), c(s)) < D(\gamma, c) < \min_{s \in [0,t]} \operatorname{injrad} (c(s)) < \operatorname{injrad} (c(s)) \ , \]
so there exists a unique minimizing geodesic defined on $[0,1]$ from $\gamma(s)$ to $c(s)$. Next, using the same argument, there exists a unique minimizing geodesic defined on $[0,1]$ from $c(s)$ to $c'(s)$. We may then parallel-transport the vector $e \in E_{\gamma(s)}$ to $c(s)$, and then to $c'(s)$, each time along the geodesics found above; we finally parallel-transport the vector obtained so far from $c'(s)$ to $c'(0) = x_0$ along $c'$, which is piecewise-smooth, thus obtaining a vector in $E_{x_0}$. The procedure just described gives a linear isometry from $E_{\gamma(s)}$ to $E_{x_0}$; it is clear that it may be inverted (by traversing the same curves in the opposite direction and in the inverse order), so this procedure is an isometric isomorphism. We may extend it in the natural way to tensor monomials of the form $e_{\gamma(s_1)} \otimes \dots \otimes e_{\gamma(s_N)} \in \mathcal E_\gamma$ with $N \in \mathbb N \setminus \{0\}$ and $s_1, \dots, s_N \in D_t$, thus obtaining tensor monomials in $(\operatorname{End} E_{x_0}) ^{\otimes \{ s_1, \dots, s_N \}} \subset (\operatorname{End} E_{x_0}) ^{\otimes D_t}$. This extension will still be a surjective isometry between monomials.

Let us now introduce two helpful auxiliary notations: if $x,y \in M$ and if $c$ is a piecewise-smooth curve from $x$ to $y$, and if $e \in E_x$, then we shall denote by $PT_{x \to y, c} (e) \in E_y$ the parallel transport of $e$ from $x$ to $y$ along $c$. The unique minimizing geodesic defined on $[0,1]$ from $x$ to $y$ will be denoted by $\gamma_{x,y}$, whenever it exists.

We may now define a local trivialization $\varphi : \operatorname{pr}_{\mathcal E}^{-1} (B(c,r)) \to B(c,r) \times (\operatorname{End} E_{x_0}) ^{\otimes D_t}$ as follows:
\begin{itemize}[wide]
\item if $\alpha \in B(c,r)$ and $e_{\alpha(s_1)} \otimes \dots \otimes e_{\alpha(s_N)} \in \mathcal E_\alpha$, then
\begin{align*}
\varphi ((\alpha, \, & e_{\alpha(s_1)} \otimes \dots \otimes e_{\alpha(s_N)})) = \\
& = (c, TP_{c'(s_1) \to x_0, c'} \, TP_{c(s_1) \to c'(s_1), \gamma_{c(s_1), c'(s_1)}} \, TP_{\alpha(s_1) \to c(s_1), \gamma_{\alpha(s_1), c(s_1)}} e_{\alpha(s_1)} \otimes \dots \\
& \dots \otimes TP_{c'(s_N) \to x_0, c'} \, TP_{c(s_N) \to c'(s_N), \gamma_{c(s_N), c'(s_N)}} \, TP_{\alpha(s_N) \to c(s_N), \gamma_{\alpha(s_N), c(s_N)}} e_{\alpha(s_N)}) \ ,
\end{align*}
as explained above;
\item on linear combinations of such tensor monomials we extend $\varphi$ by linearity, thus obtaining a linear isometric isomorphism;
\item since $\mathcal E_\alpha$ is the Hilbert completion of an algebraic inductive limit, we define $\varphi$ on limits of elements from the algebraic inductive limit by continuity.
\end{itemize}

The map $\varphi : \mathcal E | _{B(c,r)} \to B(c,r) \times (\operatorname{End} E_{x_0}) ^{\otimes D_t}$ allows us now to define a topology on $\mathcal E | _{B(c,r)}$ by transporting the topology from $B(c,r) \times (\operatorname{End} E_{x_0}) ^{\otimes D_t}$ back under $\varphi ^{-1}$. In particular, since $B(c,r)$ is a metric space and $(\operatorname{End} E_{x_0}) ^{\otimes D_t}$ is a Hilbert space, the topology so constructed on $\mathcal E | _{B(c,r)}$ will be first-countable.

It remains to show that these topologies defined only locally are compatible with each other. More precisely, let us show that if the curves $c_1, c_2 \in \mathcal C_t$ and the numbers $r_1, r_2 > 0$ are such that $B(c_1, r_1) \cap B(c_2, r_2) \ne \emptyset$, and if $\varphi_1, \varphi_2$ are two local trivializations above these two balls constructed as above, then the local topologies induced by $\varphi_1$ and $\varphi_2$ coincide on $\mathcal E | _{B(c_1, r_1) \cap B(c_2, r_2)}$. But this is easy, since the map $\phi_1 \circ \phi_2 ^{-1} : B(c_1, r_1) \cap B(c_2, r_2) \times (\operatorname{End} E_{x_0}) ^{\otimes D_t} \to B(c_1, r_1) \cap B(c_2, r_2) \times (\operatorname{End} E_{x_0}) ^{\otimes D_t}$ is the identity on the first factor, and is a continuous map defined on $B(c_1, r_1) \cap B(c_2, r_2)$ with values in the group of isometries of $(\operatorname{End} E_{x_0}) ^{\otimes D_t}$ on the second factor, whence the conclusion is clear.

Since the local topologies constructed above have turned out to be compatible with each other, they may be glued together into a unique (first-countable) global topology on $\mathcal E$. In this topology, the maps $\varphi$ constructed above become continuous local trivializations.

\begin{remark}
Since $\mathcal C_t$ is separable, it follows from the above considerations that $\mathcal C_t$ may be covered by a countable family of trivialization domains, a fact which will be useful later on.
\end{remark}

Let $\pi_k : \mathcal C_t \to M^{2^k + 1}$ denote the projection given by
\[ \pi_k (c) = \left( c(0), c \left( \frac t {2^k} \right), \dots, c\left( \frac {(2^k-1)t} {2^k} \right), c(t) \right) \ . \]

\begin{proposition}
The projections $\pi_k : \mathcal C_t \to M^{2^k+1}$ and the projection $\operatorname{pr}_{\mathcal E} : \mathcal E \to \mathcal C_t$ are continuous, for all $k \in \mathbb N$.
\end{proposition}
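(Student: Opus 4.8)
The plan is to treat the two projections separately, since each follows almost immediately from the definitions of the respective topologies.

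For $\pi_k$, I would use the elementary fact that a map into a finite product is continuous if and only if each of its component maps is continuous. The components of $\pi_k$ are precisely the evaluation maps $\operatorname{ev}_s : \mathcal C_t \to M$, $\operatorname{ev}_s(c) = c(s)$, indexed by $s \in D_{t,k}$. Each of these is $1$-Lipschitz with respect to the metrics $D$ on $\mathcal C_t$ and $d$ on $M$, since
\[ d(c_1(s), c_2(s)) \le \max_{u \in [0,t]} d(c_1(u), c_2(u)) = D(c_1, c_2) \]
for every $s \in [0,t]$. Hence each $\operatorname{ev}_s$ is continuous, and therefore so is $\pi_k$.

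For $\operatorname{pr}_{\mathcal E}$, I would exploit that continuity is a local property together with the fact that the trivialization domains $\operatorname{pr}_{\mathcal E}^{-1}(B(c,r))$ cover $\mathcal E$. On each such domain the topology of $\mathcal E$ was, by construction, defined as the unique one turning the trivialization $\varphi : \mathcal E |_{B(c,r)} \to B(c,r) \times (\operatorname{End} E_{x_0})^{\otimes D_t}$ into a homeomorphism. Under this identification the map $\operatorname{pr}_{\mathcal E}$ corresponds to the projection onto the first factor of the product, which is continuous; hence $\operatorname{pr}_{\mathcal E}$ is continuous on each trivialization domain. Since these domains form an open cover of $\mathcal E$, the projection $\operatorname{pr}_{\mathcal E}$ is continuous on all of $\mathcal E$.

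I do not expect a genuine obstacle here: both assertions amount to unwinding the definitions of the sup-metric topology on $\mathcal C_t$ and of the locally-trivialized topology on $\mathcal E$. The only point deserving a moment of care is that one is entitled to argue "continuous on each trivialization domain implies continuous on $\mathcal E$", which is legitimate precisely because the global topology on $\mathcal E$ is well-defined — this having been guaranteed by the compatibility of the local trivializations verified just above.
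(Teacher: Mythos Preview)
Your proof is correct and essentially matches the paper's: for $\operatorname{pr}_{\mathcal E}$ the argument is identical, and for $\pi_k$ the paper gives a direct Lipschitz estimate $d_k(\pi_k(c),\pi_k(c')) \le 2^{k/2} D(c,c')$ on the product metric rather than arguing component-wise, but this is a cosmetic difference only.
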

\begin{proof}
If we denote by $d_k$ the distance induced by the Riemannian tensor on $M^{2^k+1}$ for every $k \in \mathbb N$, then
\begin{align*}
d_k (\pi_k(c), \pi_k(c')) & = d\left( \left( c(0), c\left( \frac t {2^k} \right), \dots, c(t) \right) , \left( c'(0), c'\left( \frac t {2^k} \right), \dots, c'(t) \right) \right) = \\
& = \sqrt{ \sum_{j=0} ^{2^k} d \left( c\left( \frac {jt} {2^k} \right), c'\left( \frac {jt} {2^k} \right) \right) ^2 } \le 2 ^{\frac k 2} \sup_{0 \le j \le 2^k} d \left( c\left( \frac {jt} {2^k} \right), c'\left( \frac {jt} {2^k} \right) \right) \le \\
& \le 2 ^{\frac k 2} \sup_{0 \le s \le t} d(c(s), c'(s)) = 2 ^{\frac k 2} D(c, c') \ ,
\end{align*}
so $\pi_k$ is continuous.

Since the local trivializations constructed above are continuous, and since the restriction of the projection $\operatorname{pr}_{\mathcal E}$ to such trivializations has the form $(c, \dots) \mapsto c$, the continuity of this projection is clear.
\end{proof}

\section{Integrable sections in bundles of infinite rank}

If $s$ is a section of $E$, the notation $\| s \|$ (without any indices) will denote the function $M \ni x \mapsto \| s(x) \|_{E_x} \in [0, \infty)$. The space $\Gamma_0 (E)$ will be the space of compactly-supported smooth sections in $E$, the space $\Gamma_c (E)$ will be the space of compactly-supported continuous sections in $E$, and $\Gamma_{cb} (E)$ will be the space of bounded continuous sections in $E$ (i.e. those continuous sections $\eta$ such that $\sup _{x \in M} \| \eta_x \| _{E_x} < \infty$). For each $1 \le p \le \infty$ the space $\Gamma^p (E)$ will be the space of classes of measurable sections that coincide almost everywhere, with the property that $\| s \| \in L^p(M)$. It is known that $\Gamma_0 (E)$ is dense in $\Gamma^p (E)$ in the norm topology if $p \ne \infty$, and in the weak-$*$ topology if $p = \infty$. The corresponding spaces of \textit{locally} $p$-integrable sections will be $\Gamma^p _{loc} (E)$.

The quadratic form $Q_{E, \nabla} : \Gamma_0 (E) \subset \Gamma^2 (E) \to \mathbb R$ defined by $Q_{E, \nabla} (\eta) = \int _M \| (\nabla \eta) _x \| _{E_x} ^2 \, \mathrm d x$ gives rise to a self-adjoint, positive, densely-defined operator $H_\nabla$ in $\Gamma^2 (E)$ (the Friedrichs extension of the connection Laplacean); by functional calculus one may define next the contraction semigroup $(\mathrm e ^{-s H_\nabla}) _{s \ge 0}$ acting in $\Gamma^2 (E)$, which we shall call "the heat semigroup in $E$ corresponding to $\nabla$" (full details can be found in \cite{Davies80}). It is shown then in chapter XI of \cite{Guneysu17} that this semigroup admits a unique integral kernel ("the heat kernel in $E$ corresponding to $\nabla$"), that is a jointly measurable map $(0, \infty) \times M \times M \ni (s,x,y) \mapsto h_\nabla (s,x,y) \in E_x \otimes E_y ^* \subset E \boxtimes E^*$ such that $h_\nabla (s, x, \cdot) \in \Gamma^2 (E^*)$, $h_\nabla (s, \cdot, y) \in \Gamma^2 (E)$, and $(\mathrm e ^{-s H_\nabla} \eta) (x) = \int _M h_\nabla (s, x, y) \, \eta (y) \, \mathrm d y$ for almost all $x, y \in M$, all $s>0$ and all $\eta \in \Gamma^2 (E)$. It is proved in the same chapter that $h_\nabla (s,x,y) ^* = h_\nabla (s,y,x)$ for all $s>0$ and almost all $x,y \in M$, where the star denotes the adjoint with respect to the Hermitian products on the fibers $E_x$ and $E_y$. One then shows that $h_\nabla$ satisfies locally the partial differential equation $(2 \partial_s + H_{\nabla,x} + H_{\nabla,y}) u = 0$ in the distributional sense (where $H_{\nabla,x}$ means the operator $H_\nabla$ acting on the argument $x$), whence it follows that $h_\nabla$ is smooth using theorem 1 in \cite{Mizohata57}. The same conclusions hold if instead of working on $M$ we work on some relatively compact open subset of it with smooth boundary. If $M = \bigcup _{i \in \mathbb N} U_i$ is an exhaustion of $M$ with such subsets, we shall use the notation $H_\nabla ^{(i)}$ for the Friedrichs extension of the connection Laplacean acting in $\Gamma^2 (E | _{U_i})$, and the corresponding heat kernel will be $h _\nabla ^{(i)}$. In the special case when the vector bundle is $M \times \mathbb C$ endowed with the usual Hermitian product and with the trivial connection, the Friedrichs extension of the connection Laplacean will be denoted simply by $H$, and the corresponding heat kernel simply by $h$; when working on a domain $U_i$ as above these will be $H^{(i)}$ and, respectively, $h^{(i)}$. It is known that $h_i \to h$ pointwise and monotonically (theorem 4 in chapter VIII of \cite{Chavel84}). It is shown in subchapter VII.3 of \cite{Guneysu17} that $\| h_\nabla (t,x,y) \| _{op} \le h(t,x,y)$ for all $t>0$ and almost all $x,y \in M$; since both these heat kernels have been seen to be smooth, and since co-null subsets are dense in $M$, it follows that the inequality is in fact true for all $x,y \in M$. A similar inequality holds on domains $U_i$ as above. This result is known as the \textbf{"diamagnetic inequality"} and will turn out to be crucial in our construction below.

\begin{definition}
We shall say that the section $\sigma : \mathcal C_t \to \mathcal E$ is a \textbf{cylindrical section} if and only if there exists a section $s \in \Gamma^\infty \left( (\operatorname{End} E)^{\boxtimes (2^k + 1)} \right)$ such that $\sigma = s \circ \pi_k$.
\end{definition}

\begin{definition}
We define the Lebesgue space $\Gamma^2 (\mathcal E)$ of square-integrable sections as the space of measurable sections $\sigma : \mathcal C_t \to \mathcal E$ identified under equality almost everywhere, with the property that the function $\mathcal C_t \ni c \mapsto \| \sigma(c) \| _{\mathcal E_c} \in [0, \infty)$ is in $L^2 (\mathcal C_t, w_t)$.
\end{definition}

\begin{theorem}
The space $\Gamma^2 (\mathcal E)$ endowed with the scalar product
\[ \langle \sigma_1, \sigma_2 \rangle _{\Gamma^2 (\mathcal E)} = \int _{\mathcal C_t} \langle \sigma_1 (c), \sigma_2 (c) \rangle _{\mathcal E_c} \, \mathrm d w_t (c) \]
is a Hilbert space. Its dual is $\Gamma^2 (\mathcal E ^*)$, where $\mathcal E^*$ is the dual bundle of $\mathcal E$ in which the fiber $\mathcal E_c ^*$ is the dual space of $\mathcal E_c$ for all $c \in \mathcal C_t$.
\end{theorem}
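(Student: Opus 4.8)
The plan is to establish the theorem in two stages: first the completeness of $\Gamma^2(\mathcal E)$ as a Hilbert space, and then the identification of its dual with $\Gamma^2(\mathcal E^*)$. The overall strategy is to reduce each claim to the analogous standard fact about vector-valued $L^2$ spaces by exploiting the countable trivializing cover of $\mathcal C_t$ afforded by the remark above, together with the fact that each model fiber $(\operatorname{End} E_{x_0})^{\otimes D_t}$ is a fixed separable Hilbert space.

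First I would verify that $\langle\cdot,\cdot\rangle_{\Gamma^2(\mathcal E)}$ is well-defined and genuinely a scalar product. Measurability of $c \mapsto \langle \sigma_1(c),\sigma_2(c)\rangle_{\mathcal E_c}$ follows by reading both sections in a common local trivialization $\varphi$, where the fiberwise Hermitian products all become the fixed product on $(\operatorname{End} E_{x_0})^{\otimes D_t}$ (the transition maps are fiberwise isometries, so the product is trivialization-independent); the integrand is then a pointwise inner product of two measurable $(\operatorname{End} E_{x_0})^{\otimes D_t}$-valued maps, hence measurable. Finiteness is immediate from Cauchy--Schwarz on each fiber followed by Cauchy--Schwarz in $L^2(\mathcal C_t, w_t)$, using that $\|\sigma_i(\cdot)\|_{\mathcal E_{(\cdot)}} \in L^2$. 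Sesquilinearity and positive-definiteness are inherited pointwise from the fiber products. For completeness, I would take a Cauchy sequence $(\sigma_n)$ in $\Gamma^2(\mathcal E)$, cover $\mathcal C_t$ by countably many trivialization domains $(B_m)_{m}$, and on each $B_m$ transport the sequence to a Cauchy sequence in the ordinary Bochner space $L^2\bigl(B_m, w_t; (\operatorname{End} E_{x_0})^{\otimes D_t}\bigr)$, which is complete because the target is a Hilbert space. The limits on overlapping domains agree almost everywhere since the transition maps are isometric isomorphisms, so they glue to a global measurable section $\sigma$ with $\sigma_n \to \sigma$ in norm; separability of $\mathcal C_t$ is what makes the countable cover available.

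For the duality statement, the natural pairing sends $\tau \in \Gamma^2(\mathcal E^*)$ to the functional $\sigma \mapsto \int_{\mathcal C_t} \langle \tau(c), \sigma(c)\rangle \, \mathrm d w_t(c)$, where $\langle\tau(c),\sigma(c)\rangle$ denotes the fiberwise evaluation of $\mathcal E_c^*$ on $\mathcal E_c$; boundedness of this functional again follows from fiberwise and then $L^2$ Cauchy--Schwarz. To see the map $\Gamma^2(\mathcal E^*) \to \Gamma^2(\mathcal E)^*$ is a conjugate-linear (or linear, depending on the chosen convention) isometric isomorphism, I would invoke the fiberwise Riesz representation $\mathcal E_c^* \simeq \mathcal E_c$ and the classical fact that the dual of $L^2\bigl(\mathcal C_t, w_t; \mathcal H\bigr)$ for a separable Hilbert space $\mathcal H$ is $L^2(\mathcal C_t, w_t; \mathcal H^*)$. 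Surjectivity is the substantive point: given a bounded functional $\Lambda$ on $\Gamma^2(\mathcal E)$, I would apply the scalar Riesz theorem to produce a representing section $\sigma_\Lambda \in \Gamma^2(\mathcal E)$ and then dualize fiberwise to obtain the required $\tau \in \Gamma^2(\mathcal E^*)$.

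The main obstacle I anticipate is not any single deep step but rather the careful handling of measurability and the gluing across trivializations: one must check that reading a section in different local trivializations produces measurable representatives whose norms and pairings are genuinely trivialization-independent, and that the almost-everywhere gluing of local limits yields a well-defined global section. This hinges entirely on the transition maps being fiberwise \emph{isometries} (established when the topology on $\mathcal E$ was constructed) and on the separability of $\mathcal C_t$, which guarantees a countable trivializing cover; once these are in hand, the completeness and duality reduce cleanly to the standard Hilbert-space-valued $L^2$ theory, and I would take care to make the conjugate-linearity of the duality map explicit so as to match the convention $\mathcal P_{t,v}^2$ will later require.
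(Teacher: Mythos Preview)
Your proposal is correct and the strategy is sound, but it takes a somewhat different route from the paper's own proof. The paper does \emph{not} reduce to the completeness of Bochner $L^2$ spaces on trivialization domains; instead it mimics the classical scalar $L^2$ completeness argument directly in the bundle: from a Cauchy sequence it extracts a rapidly convergent subsequence, uses monotone convergence to produce a dominating function $f \in L^2(\mathcal C_t)$, and then exploits the completeness of each individual fiber $\mathcal E_c$ (rather than of a Bochner space over a patch) to obtain a pointwise limit $\sigma(c)$ for almost every $c$. The countable trivializing cover enters only at the very end, and only to verify that this pointwise-defined limit is a measurable section; convergence in norm then follows by dominated convergence against $f^2$. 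Your approach instead pushes everything through the trivializations from the start, invoking the known completeness of $L^2(B_m, w_t; \mathcal H)$ and then gluing the local limits. What your approach buys is that you can quote standard Bochner-space theorems wholesale; what the paper's approach buys is that it avoids any discussion of compatibility of the local limits on overlaps, since the limit is defined fiberwise once and for all. For the duality statement both arguments are essentially the same (the paper just says ``using the same techniques''), and your remark that the key input throughout is that the transition maps are fiberwise isometries is exactly right.
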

\begin{proof}
That $\Gamma^2 (\mathcal E)$ is an inner product space is easy. The proof of its metric completeness follows faithfully the usual one of the completeness of the space $L^2$. The main ingredients are the fact that each fiber is, in turn, complete (being a Hilbert space), and the fact that $\mathcal C_t$ may be covered by a countable family of trivialization domains (a consequence if its separability). More specifically, assume that $(\sigma_k) _{k \in \mathbb N} \subset \Gamma^2 (\mathcal E)$ is a Cauchy sequence. There exists a subsequence $(\sigma_{k_m}) _{m \in \mathbb N}$ such that
\[ \| \sigma_{k_{m+1}} - \sigma_{k_m} \| _{\Gamma^2 (\mathcal E)} \le \frac 1 {2^{m+1}} \]
for each $m \in \mathbb N$. Define the function
\[ f_{m+1} (x) = \sum _{l=0} ^m \| \sigma_{k_{m+1}} (c) - \sigma_{k_m} (c) \| _{\mathcal E _c} \]
for every $m \in \mathbb N$ and notice that $\| f_m \| _{L^2 (\mathcal C_t)} \le 1$. As a consequence of the monotone convergence theorem, $(f_m) _{m \ge 1}$ has a limit $f \in L^2 (\mathcal C_t)$, finite almost everywhere.

If $m \ge l \ge 0$, then for almost all $c \in \mathcal C_t$ we have
\[ \| \sigma_{k_m} (c) - \sigma_{k_l} (c) \| _{\mathcal E _c} \le \| \sigma_{k_m} (c) - \sigma_{k_m - 1} (c) \| _{\mathcal E _c} + \dots + \| \sigma_{k_l + 1} (c) - \sigma_{k_l} (c) \| _{\mathcal E _c} \le f(c) - f_{k_l} (c) \to 0 \ , \]
therefore for almost all $c \in \mathcal C_t$ the sequence $(\sigma_{k_m} (c)) _{m \in \mathbb N} \subset \mathcal E _c$ is Cauchy. Since the space $\mathcal E _c$ is, by construction, a Hilbert space, hence complete, it follows that for almost all $c \in \mathcal C_t$ there exists a unique element $\sigma(c) \in \mathcal E _c$ such that $\sigma_{k_m} (c) \to \sigma(c)$. We have already noticed that $\mathcal C_t$ may be covered by a countable family of trivialization domains (open balls) of $\mathcal E$; on each of them, $\sigma_{k_m} \to \sigma$ almost everywhere, therefore the restriction of $\sigma$ to each such trivialization domain is measurable. Since this family of trivialization domains is countable, it follows that $\sigma$ is measurable. Furthermore, passing to the limit in the inequality
\[ \| \sigma_{k_m} (c) - \sigma_{k_l} (c) \| _{\mathcal E _c} \le f(c) - f_{k_l} (c) \le f(c) \]
we obtain $\| \sigma(c) - \sigma_{k_l} (c) \| _{\mathcal E _c} \le f(c)$, whence
\[ | \|\sigma\| (c) - \|\sigma_{k_l}\| (c) | \le \| \sigma(c) - \sigma_{k_l} (c) \| _{\mathcal E _c} \le f(c) \ , \]
hence $\| \sigma \| \in L^2(\mathcal C_t)$, which means that $\sigma \in \Gamma^2 (\mathcal E)$. Finally, applying the dominated convergence theorem to the sequence $c \mapsto \| \sigma(c) - \sigma_{k_l} (c) \| _{\mathcal E _c} ^2$ (which is dominated by $f^2$), we conclude that $\sigma_{k_m} \to \sigma$ in $\Gamma^2 (\mathcal E)$, so $\sigma_k \to \sigma$ in $\Gamma^2 (\mathcal E)$.

That the dual of $\Gamma^2 (\mathcal E)$ is $\Gamma^2 (\mathcal E^*)$ is now easy, using the same techniques.
\end{proof}

More generally, and along the same lines of thought, one may introduce the space $\Gamma^p (\mathcal E)$ for every $p \in [1, \infty]$, which will be a Banach space. In particular, $\Gamma^q (\mathcal E) \subseteq \Gamma^p (\mathcal E)$ if $p \le q$, because the Wiener measure is finite. Also, $\Gamma^p (\mathcal E ^*)$ is the dual of $\Gamma^{\frac p {p-1}} (\mathcal E)$ for every $p \in (1, \infty]$. The proofs are analogous to those for the spaces $L^p$, the latter being found, for instance, in chap.4 of \cite{Brezis11}.

\begin{theorem}
The space $\operatorname {Cyl} _t (\mathcal E)$ of continuous and bounded cylindrical sections is dense in $\Gamma^2 (\mathcal E)$.
\end{theorem}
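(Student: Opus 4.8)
The plan is to combine a fiberwise truncation with a conditional-expectation (martingale) argument, reducing everything to two facts already available: that $\bigcup_k \bigotimes_{s \in D_{t,k}} \operatorname{End} E_{c(s)}$ is dense in each fiber $\mathcal E_c$ (by the construction of $\mathcal E_c$ as the completion of an inductive limit), and that $\mathcal C_t$ carries a rich enough cylindrical structure. Write $F_k = (\operatorname{End} E)^{\boxtimes (2^k+1)} \to M^{2^k+1}$, let $\mu_k = (\pi_k)_* w_t$ be the pushforward of the Wiener measure, and set $\mathcal F_k = \pi_k^{-1}(\mathcal B(M^{2^k+1}))$. For each $k$, let $V_k \subseteq \Gamma^2(\mathcal E)$ be the closed subspace of $\mathcal F_k$-measurable sections whose value at $c$ lies in the ``level-$k$'' subspace $\bigotimes_{s \in D_{t,k}} \operatorname{End} E_{c(s)}$ of $\mathcal E_c$. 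Since this subspace embeds isometrically into $\mathcal E_c$ (the identity operators have norm $1$), the map $s \mapsto s \circ \pi_k$ is an isometric isomorphism $L^2(M^{2^k+1}, \mu_k; F_k) \to V_k$, and clearly $V_k \subseteq V_{k+1}$.

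First I would introduce the fiberwise orthogonal projection $P_k$ onto the level-$k$ subspace. In any local trivialization $\varphi$ this is the \emph{fixed} orthogonal projection onto $(\operatorname{End} E_{x_0})^{\otimes D_{t,k}}$ tensored with identities, precisely because parallel transport sends $\operatorname{Id}_{E_{c(s)}}$ to $\operatorname{Id}_{E_{x_0}}$; hence $P_k\sigma$ is measurable for every $\sigma \in \Gamma^2(\mathcal E)$. By the density of $\bigcup_k$ level-$k$ subspaces in each fiber one has $\|P_k\sigma(c) - \sigma(c)\|_{\mathcal E_c} \to 0$ for a.e.\ $c$, dominated by $\|\sigma(c)\|_{\mathcal E_c} \in L^2$, so $P_k\sigma \to \sigma$ in $\Gamma^2(\mathcal E)$ by dominated convergence. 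Next, for any $\xi$ taking values in the level-$k$ subspace and any $k' \ge k$, the fiber $\bigotimes_{s \in D_{t,k}} \operatorname{End} E_{c(s)}$ depends only on $\pi_{k'}(c)$ and is constant along the fibers of $\pi_{k'}$, so the fiberwise conditional expectation $E_{k'}\xi := E[\xi \mid \mathcal F_{k'}]$ is a genuine Hilbert-space-valued conditional expectation; it is a self-adjoint idempotent contraction with range in $V_{k'}$, and since the dyadic evaluations $c \mapsto c(s)$ generate the Borel $\sigma$-algebra of the separable space $\mathcal C_t$, the filtration $\mathcal F_{k'}$ increases to it, whence $E_{k'}\xi \to \xi$ by the martingale convergence theorem.

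With these tools I would show that $\bigcup_k V_k$ is dense by verifying that its orthogonal complement is trivial. Suppose $\tau \in \Gamma^2(\mathcal E)$ is orthogonal to every $V_{k'}$. Fixing $k$ and taking $k' \ge k$, the section $E_{k'}[P_k\tau]$ lies in $V_{k'}$, so using that $P_k\tau$ is level-$k$-valued and that $E_{k'}$ is a self-adjoint idempotent one gets $0 = \langle \tau, E_{k'}[P_k\tau] \rangle_{\Gamma^2(\mathcal E)} = \langle P_k\tau, E_{k'}[P_k\tau] \rangle_{\Gamma^2(\mathcal E)} = \|E_{k'}[P_k\tau]\|_{\Gamma^2(\mathcal E)}^2$, hence $E_{k'}[P_k\tau] = 0$; letting $k' \to \infty$ yields $P_k\tau = 0$, and then $k \to \infty$ yields $\tau = \lim_k P_k\tau = 0$. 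Thus $\overline{\bigcup_k V_k} = \Gamma^2(\mathcal E)$. It then remains only to pass from $L^2$ to continuous bounded coefficients inside each $V_k$: given $\sigma = s \circ \pi_k \in V_k$ with $s \in L^2(\mu_k; F_k)$, I approximate $s$ in $L^2(\mu_k; F_k)$ by a continuous bounded section $s'$ (continuous bounded sections are dense in the $L^2$ space of the finite Borel measure $\mu_k$); then $s' \circ \pi_k$ is a continuous bounded cylindrical section (continuity following from that of $\pi_k$ and of the local trivializations) and $\|s'\circ\pi_k - \sigma\|_{\Gamma^2(\mathcal E)} = \|s' - s\|_{L^2(\mu_k; F_k)}$ is as small as desired.

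I expect the main obstacle to be making the conditional-expectation step rigorous, the difficulty being that the fiber $\mathcal E_c$ depends on the \emph{entire} curve $c$, so $E[\cdot \mid \mathcal F_{k'}]$ cannot be applied to an arbitrary section of $\mathcal E$. This is exactly what the preliminary truncation $P_k$ remedies: once a section is valued in the level-$k$ subbundle its fiber depends only on the finitely many points $\pi_{k'}(c)$ and is constant along the fibers of $\pi_{k'}$, so the conditional expectation reduces to the standard vector-valued one (coordinatized via a measurable global frame obtained from the countable trivializing cover noted earlier) and classical martingale convergence applies. The two remaining points — the measurability of $P_k\sigma$ and the identification of $\sigma\big(\bigcup_k \mathcal F_k\big)$ with the Borel $\sigma$-algebra of $\mathcal C_t$ — are comparatively routine.
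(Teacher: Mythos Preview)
Your argument is correct and takes a genuinely different route from the paper's proof. The paper also argues via the orthogonal complement, but rather than introducing the fiberwise truncations $P_k$ and the conditional expectations $E_{k'}$, it proceeds in two explicit steps: first it multiplies a putative $\sigma'\in\operatorname{Cyl}_t(\mathcal E)^\perp$ by cylindrical \emph{functions} and invokes the density of those in $L^2(\mathcal C_t, w_t)$ (an external result) to conclude that $\langle\sigma(c),\sigma'(c)\rangle_{\mathcal E_c}=0$ for $w_t$-a.e.\ $c$ and every cylindrical $\sigma$; then it builds, from a global measurable orthonormal frame $\{\eta^1,\dots,\eta^{r^2}\}$ in $\operatorname{End}E$ with $\eta^1=\operatorname{Id}$, the countable family $\sigma_{j_0\dots j_{2^k}}(c)=\eta^{j_0}(c(0))\otimes\cdots\otimes\eta^{j_{2^k}}(c(t))$, observes that these form an orthonormal basis of each fiber $\mathcal E_c$, and intersects the countably many co-null sets on which $\sigma'$ is orthogonal to each basis vector to force $\sigma'=0$ almost everywhere.

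Your approach trades these two ingredients for the martingale machinery: the role of the density of cylindrical functions is absorbed into the statement that $\bigcup_{k'}\mathcal F_{k'}$ generates the Borel $\sigma$-algebra of $\mathcal C_t$ (whence $E_{k'}\to\operatorname{Id}$ strongly on level-$k$-valued $L^2$ sections), and the explicit fiberwise orthonormal basis is replaced by the single projection $P_k$ and dominated convergence. What this buys you is a cleaner, more structural decomposition (``truncate fiberwise, then condition on the base'') that is agnostic to the particular frame and does not rely on the cited density result for cylindrical scalar functions; what the paper's approach buys is a more elementary argument that avoids martingale convergence and makes the countable basis of each $\mathcal E_c$ completely explicit. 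Both proofs ultimately use the same global measurable frame (you need it to coordinatize the vector-valued conditional expectation, the paper needs it to write down the basis), and both finish with the same routine $L^2$-to-continuous-bounded approximation on the finite-dimensional base.
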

\begin{proof}
The inclusion $\operatorname {Cyl} _t (\mathcal E) \subset \Gamma^2 (\mathcal E)$ is trivial: if $s : M^{2^k + 1} \to (\operatorname{End} E)^{\boxtimes (2^k + 1)}$ is essentially bounded, then
\[ \int _{\mathcal C_t} \| (s \circ \pi_k) (c) \| _{\mathcal E_c} \, \mathrm d w_t (c) \le \sup_{c \in \mathcal C_t} \| (s \circ \pi_k) (c) \| \, w_t (\mathcal C_t) < \infty \ . \]

Let now $\sigma' \in \operatorname {Cyl} _t (\mathcal E) ^\perp$; we shall show that $\sigma' = 0$. If $f \in \operatorname{Cyl} (\mathcal C_t)$ is a cylindrical function (the definition of which is given in \cite{Mustatea22}) and $\sigma \in \operatorname{Cyl}_t (\mathcal E)$ is a cylindrical section, then it is easy to show that $f \sigma \in \operatorname{Cyl}_t (\mathcal E)$ and, since $\sigma' \in \operatorname {Cyl} _t (\mathcal E) ^\perp$, we shall have in particular that
\[ 0 = \langle f \sigma, \sigma' \rangle _{\Gamma^2 (\mathcal E)} = \int _{\mathcal C_t} f(c) \, \langle \sigma(c), \sigma'(c) \rangle _{\mathcal E_c} \, \mathrm d w_t (c) \ . \]
Using theorem 2.1 in \cite{Mustatea22}, the cylindrical functions are dense in $L^2 (\mathcal C_t)$, so
\[ \int _{\mathcal C_t} f(c) \, \langle \sigma(c), \sigma'(c) \rangle _{\mathcal E_c} \, \mathrm d w_t (c) = 0 \]
for all $f \in L^2 (\mathcal C_t)$, whence we deduce that $\langle \sigma(c), \sigma'(c) \rangle _{\mathcal E_c} = 0$ for all $c$ in some co-null subset $C_\sigma \subseteq \mathcal C_t$.

Let $M = \bigcup _{i \in \mathbb N} V_i '$ be a cover of $M$ with open trivialization domains for $E$. Let $V_0 = V_0 '$ and $V_i = V_i ' \setminus (V_0 \cup \dots \cup V_{i-1})$ for $i \ge 1$; these subsets will be measurable, pairwise disjoint, trivialization domains. Let $\{ \eta _i ^1, \dots, \eta _i ^{r^2} \}$ be a measurable orthonormal frame in $\operatorname{End} E | _{V_i}$ in which $\eta _i ^1 (x) = \operatorname{Id} _{E_x}$ for all $x \in V_i$. Defining $\eta ^l$ by $\eta ^l | _{V_i} = \eta _i ^l$ for all $1 \le l \le r^2$ and $i \in \mathbb N$, we obtain a global measurable orthonormal frame $\{ \eta ^1, \dots, \eta ^{r^2} \}$ in $\operatorname{End} E$ made of sections from $\Gamma^\infty (\operatorname {End} E)$, in which $\eta ^1 (x) = \operatorname{Id} _{E_x}$ for all $x \in M$.

For every $k \in \mathbb N$ and $1 \le j_0, \dots, j_{2^k} \le r^2$ define
\[ \sigma _{j_0 \dots j_{2^k}} (c) = \eta ^{j_0} (c(0)) \otimes \eta ^{j_1} (c(\frac t {2^k})) \otimes \dots \otimes \eta ^{j_{2^k}} (c(t)) \]
and notice that $\sigma _{j_0 \dots j_{2^k}} \in \operatorname{Cyl}_t (\mathcal E)$ and that the subset $\{ \sigma _{j_0 \dots j_{2^k-1}} (c) \mid k \in \mathbb N, \ 1 \le j_0, \dots, j_{2^k} \le r^2 \}$ is a countable orthonormal basis in the fiber $\mathcal E_c$ for all $c \in \mathcal C_t$. We then deduce that there exists a co-null subset $C_{j_0 \dots j_{2^k}} \subseteq \mathcal C_t$ such that
\[ \langle \sigma _{j_0 \dots j_{2^k-1}} (c), \sigma' (c) \rangle _{\mathcal E_c} = 0 \]
for all $c \in C_{j_0 \dots j_{2^k}}$, all $k \in \mathbb N$ and $1 \le j_0, \dots, j_{2^k} \le r^2$. If
\[ C = \bigcap _{k \in \mathbb N} \bigcap _{1 \le j_1, \dots, j_{2^k} \le r^2} C_{j_0 \dots j_{2^k}} \]
then $C$ will be co-null and
\[ \langle \sigma _{j_0 \dots j_{2^k-1}} (c), \sigma' (c) \rangle _{\mathcal E_c} = 0 \]
for all $c \in C$, for all $k \in \mathbb N$ and $1 \le j_0, \dots, j_{2^k} \le r^2$, whence $\langle u, \sigma' (c) \rangle _{\mathcal E_c} = 0$ for all $u \in \mathcal E_c$, hence $\sigma' (c) = 0$ for all $c \in C$, so $\sigma'=0$ in $\Gamma^2 (\mathcal E)$, so $\operatorname {Cyl} _t (\mathcal E) ^\perp = 0$, meaning that $\operatorname {Cyl} _t (\mathcal E)$ is dense in $\Gamma^2 (\mathcal E)$.
\end{proof}

In what follows, the main technical result (theorem \ref{application of Chernoff's theorem}) will be based upon the use of Chernoff's approximation theorem for $1$-parameter semigroups. This, in turn, will require us to work on compact subsets of $M$ in order to be able to guarantee the boundedness of certain complicated continuous functions. For this reason we shall consider an exhaustion $M = \bigcup _{i \in \mathbb N} U_i$ of $M$ with relatively compact connected domains with smooth boundary, such that $x_0 \in U_0$. In particular, these domains will be Riemannian manifolds, therefore all the above considerations will apply to them, too. All the mathematical objects on $U_i$ obtained as restrictions of some extrinsic objects will be represented visually by the restriction symbol (such as in, for instance, the bundle $E | _{U_i}$), and all the objects intrinsically associated to $U_i$ will carry the index $(i)$ (for instance: the heat kernel associated to the connection $\nabla$ in $E | _{U_i}$ will be $h_\nabla ^{(i)}$, the Laplacean understood as the generator of the heat semigroup acting in $C (\overline {U_i})$ will be $L^{(i)}$ etc.).

For each $i \in \mathbb N$ we shall consider the space
\[ \mathcal C_t (\overline{U_i}) = \{ c \in \mathcal C_t \mid c([0,t]) \subseteq \overline {U_i} \} \]
endowed with the restriction of the distance $D$ introduced on $\mathcal C_t$. The natural measure on $\mathcal C_t (\overline{U_i})$ will \textit{not} be the restriction of the Wiener measure $w_t$, but rather the intrinsic Wiener measure $w_t ^{(i)}$ obtained from the intrinsic heat kernel $h ^{(i)}$ on $\overline {U_i}$. It is elementary that $\mathcal C_t (\overline{U_i})$ is closed (and therefore Borel) in $\mathcal C_t$: the evaluation map $\operatorname{ev} : [0,t] \times \mathcal C_t \to M$ defined by $\operatorname{ev} (s, \gamma) = \gamma(s)$ is obviously continuous, whence
\[ \mathcal C_t (\overline {U_i}) = \{ \gamma \in C_t \mid \gamma(s) \in \overline {U_i} \ \forall s \in [0,t] \} = \bigcap _{s \in [0,t]} \operatorname{ev} (s, \cdot) ^{-1} (\overline {U_i}) \]
is obviously closed. One shows similarly that, if $i \le j$, then $\mathcal C_t (\overline {U_i})$ is closed in $\mathcal C_t (\overline {U_j})$. It is also known that $w_t ^{(i)} \le w_t | _{\mathcal C_t (\overline{U_i})}$. For details about the Wiener measure, the article \cite{BP11} contains all the necessary constructions and explanations; note that the constructions therein are not probabilistic, but functional-analytic, therefore our project of a purely functional-analytic construction of the stochastic parallel transport is not compromised.

In the following, we shall define a continuous linear functional on $\Gamma^2 (\mathcal E | _{\mathcal C_t (\overline {U_i})})$ to which, by Riesz's representation theorem, there will correspond a section from $\Gamma^2 (\mathcal E^* | _{\mathcal C_t (\overline {U_i})})$ which will be seen to be intimately linked to the stochastic parallel transport. Fix $\omega \in E_{x_0} ^*$ and $\eta \in \Gamma_{cb} (E)$, and define the functional $W_{t, \omega, \eta} ^{(i)}$ on continuous and bounded cylindrical sections as follows: if $s : \overline {U_i} ^{2^k + 1} \to (\operatorname{End} E)^{\boxtimes (2^k + 1)} | _{U^{2^k + 1}}$ is a continuous and bounded section, define
\begin{align*}
W_{t, \omega, \eta} ^{(i)} (s \circ \pi_k) = & \int _{U_i} \mathrm d x_1 \dots \int _{U_i} \mathrm d x_{2^k} \left[ \omega \otimes h_\nabla ^{(i)} \left( \frac t {2^k}, x_0, x_1 \right) \otimes \dots \right. \\
& \left. \dots \otimes h_\nabla ^{(i)} \left( \frac t {2^k}, x_{2^k-1}, x_{2^k} \right) \otimes \eta(x_{2^k}) \right] \cdot s(x_0, x_1, \dots, s_{x_{2^k}}) \ .
\end{align*}
The dot inside the integral denotes not a scalar product but a tensor contraction which, in order to be understood, requires a brief discussion. The term $\omega \otimes h_\nabla \left( \frac t {2^k}, x_0, x_1 \right) \otimes \dots \otimes h_\nabla \left( \frac t {2^k}, x_{2^k-1}, x_{2^k} \right) \otimes \eta(x_{2^k})$ belongs to the space $E_{x_0} ^* \otimes (E_{x_0} \otimes E_{x_1} ^*) \otimes \dots \otimes (E_{x_{2^k - 1}} \otimes E_{x_{2^k}} ^*) \otimes E_{x_{2^k}}$ which is naturally isomorphic to $(E_{x_0} ^* \otimes E_{x_0}) \otimes \dots \otimes (E_{x_{2^k}} ^* \otimes E_{x_{2^k}})$, which in turn is isomorphic to $(\operatorname{End} E^*) _{x_0} \otimes \dots \otimes (\operatorname{End} E^*) _{x_{2^k}}$ (notice that the latter isomorphism is not the natural one, but the natural one multiplied by a normalization factor, because the scalar product of two endomorphisms has been defined such that the identity should have norm $1$). In turn, $s(x_0, \dots, x_{2^k})$ belongs to the space $(\operatorname{End} E) _{x_0} \otimes \dots \otimes (\operatorname{End} E) _{x_{2^k}}$, therefore the term on the left of the dot may be naturally applied to the one on the right of the dot, this being the meaning of the tensor contraction inside the integral.

Let us show that, indeed, the functional is well defined. First, if $l > k$ then there exists a projection $\pi_{kl} : M^{2^l+1} \to M^{2^k+1}$ given by $\pi_{kl} (x_0, \dots, x_{2^l}) = (x_{j 2^{l-k}})_{0 \le j \le 2^k}$, so that $s \circ \pi_k = (s \circ \pi_{kl}) \circ \pi_l$. This shows that a cylindrical section may have multiple writings of the form $s \circ \pi_k$. This fact is fortunately compensated inside the integral by the convolution property of the kernel $h_\nabla ^{(i)}$, which insures that the formula of definition of $W_{t, \omega, \eta} ^{(i)}$ does not depend on the writing of the cylindrical sections.

In order to show that the integral in the definition of $W_{t, \omega, \eta} ^{(i)}$ exists, let us notice that
\begin{gather*}
\left| \left[ \omega \otimes h_\nabla ^{(i)} \left( \frac t {2^k}, x_0, x_1 \right) \otimes \dots \otimes h_\nabla ^{(i)} \left( \frac t {2^k}, x_{2^k-1}, x_{2^k} \right) \otimes \eta(x_{2^k}) \right] \cdot s(x_0, x_1, \dots, s_{x_{2^k}}) \right| \le \\
\left\| \omega \otimes h_\nabla ^{(i)} \left( \frac t {2^k}, x_0, x_1 \right) \otimes \dots \otimes h_\nabla ^{(i)} \left( \frac t {2^k}, x_{2^k-1}, x_{2^k} \right) \otimes \eta(x_{2^k}) \right\| \ \| s(x_0, x_1, \dots, s_{x_{2^k}}) \| \ ,
\end{gather*}
each of the two norms being considered in the appropriate space. We shall leave the second as it is, but we shall work on the first one. Let us consider the orthonormal basis $\{ e^i _1, \dots, e^i _r \}$ in each fiber $E_{x_i}$, and the dual basis $\{ f_i ^1, \dots, f_i ^r \}$ in each fiber $E_{x_i} ^*$. In these bases we have (using Einstein's summation convention) $\omega = \omega_{i'_0} \, f_0 ^{i'_0}$, $h_\nabla ^{(i)} \left( \frac t {2^k}, x_{j-1}, x_j \right) = h^{(i), i_{j-1}} _{i'_j} \, e^{j-1} _{i_{j-1}} \otimes f_j ^{i'_j}$ (for $1 \le j \le 2^k$) and $\eta(x_{2^k}) = \eta^{i_{2^k}} \, e^{2^k} _{i_{2^k}}$, hence
\begin{gather*}
\left\| \omega \otimes h_\nabla ^{(i)} \left( \frac t {2^k}, x_0, x_1 \right) \otimes \dots \otimes h_\nabla ^{(i)} \left( \frac t {2^k}, x_{2^k-1}, x_{2^k} \right) \otimes \eta(x_{2^k}) \right\| ^2 _{(\operatorname {End} E^*) _{x_0} \otimes \dots \otimes (\operatorname {End} E^*) _{x_{2^k}}} = \\
= \omega_{i'_0} \, h^{(i), i_0} _{i'_1} \dots h^{(i), i_{2^k-1}} _{i'_{2^k}} \, \eta^{i_{2^k}} \, \overline {\omega_{j'_0} \, h^{(i), j_0} _{j'_1} \dots h^{(i), j_{2^k-1}} _{j'_{2^k}} \, \eta^{j_{2^k}}} \cdot \\
\cdot \langle f_0 ^{i'_0} \otimes e^0 _{i_0} \otimes \dots \otimes f_{2^k} ^{i'_{2^k}} \otimes e^{2^k} _{i_{2^k}}, f_0 ^{j'_0} \otimes e^0 _{j_0} \otimes \dots \otimes f_{2^k} ^{j'_{2^k}} \otimes e^{2^k} _{j_{2^k}} \rangle _{(\operatorname {End} E^*) _{x_0} \otimes \dots \otimes (\operatorname {End} E^*) _{x_{2^k}}} = \\
= \omega_{i'_0} \, h^{(i), i_0} _{i'_1} \dots h^{(i), i_{2^k-1}} _{i'_{2^k}} \, \eta^{i_{2^k}} \, \overline {\omega_{j'_0} \, h^{(i), j_0} _{j'_1} \dots h^{(i), j_{2^k-1}} _{j'_{2^k}} \, \eta^{j_{2^k}}} \cdot \\
\cdot \langle f_0 ^{i'_0} \otimes e^0 _{i_0}, f_0 ^{j'_0} \otimes e^0 _{j_0} \rangle _{(\operatorname{End} E^*) _{x_0}} \dots \langle f_{2^k} ^{i'_{2^k}} \otimes e^{2^k} _{i_{2^k}}, f_{2^k} ^{j'_{2^k}} \otimes e^{2^k} _{j_{2^k}} \rangle _{(\operatorname{End} E^*) _{x_{2^k}}} = \\
= \left( \frac 1 r \right) ^{2^k + 1} \omega_{i'_0} \, h^{(i), i_0} _{i'_1} \dots h^{(i), i_{2^k-1}} _{i'_{2^k}} \, \eta^{i_{2^k}} \, \overline {\omega_{j'_0} \, h^{(i), j_0} _{j'_1} \dots h^{(i), j_{2^k-1}} _{j'_{2^k}} \, \eta^{j_{2^k}}} \cdot \\
\cdot \langle f_0 ^{i'_0} \otimes e^0 _{i_0}, f_0 ^{j'_0} \otimes e^0 _{j_0} \rangle _{E_{x_0} ^* \otimes E_{x_0}} \dots \langle f_{2^k} ^{i'_{2^k}} \otimes e^{2^k} _{i_{2^k}}, f_{2^k} ^{j'_{2^k}} \otimes e^{2^k} _{j_{2^k}} \rangle _{E_{x_{2^k}} ^* \otimes E_{x_{2^k}}} = \\
= \left( \frac 1 r \right) ^{2^k + 1} \sum_{i'_0} |\omega_{i'_0}|^2 \sum_{i_0, i'_1} |h^{(i), i_0} _{i'_1}|^2 \dots \sum_{i_{2^k-1}, i'_{2^k}} |h^{i_{2^k-1}} _{i'_{2^k}}|^2 \sum_{i_{2^k}} |\eta^{i_{2^k}}|^2 = \\
= \left( \frac 1 r \right) ^{2^k + 1} \| \omega \| ^2 _{E_{x_0} ^*} \left\| h_\nabla ^{(i)} \left( \frac t {2^k}, x_0, x_1 \right) \right\| ^2 _{E_{x_0} \otimes E_{x_1}^*} \dots \\
\dots \left\| h_\nabla ^{(i)} \left( \frac t {2^k}, x_{2^k - 1}, x_{2^k} \right) \right\| ^2 _{E_{x_{2^k-1}} \otimes E_{x_{2^k}}^*} \| \eta (x_{2^k}) \| ^2 _{E_{x_{2^k}}} \le \\
\le \frac 1 r \| \omega \| ^2 _{E_{x_0} ^*} \left\| h_\nabla ^{(i)} \left( \frac t {2^k}, x_0, x_1 \right) \right\| ^2 _{op} \dots \left\| h_\nabla ^{(i)} \left( \frac t {2^k}, x_{2^k - 1}, x_{2^k} \right) \right\| ^2 _{op} \| \eta (x_{2^k}) \| ^2 _{E_{x_{2^k}}} \le \\
\le \frac 1 r \| \omega \| ^2 _{E_{x_0} ^*} h ^{(i)} \left( \frac t {2^k}, x_0, x_1 \right) ^2 \dots h ^{(i)} \left( \frac t {2^k}, x_{2^k - 1}, x_{2^k} \right) ^2 \| \eta (x_{2^k}) \| ^2 _{E_{x_{2^k}}} \ ,
\end{gather*}
where we have used that $\| A \| _{V \otimes U^*} \le \sqrt r \| A \| _{op}$ for any linear map $A : U \to V$ between vector spaces of dimension $r$. We have also used the diamagnetic inequality $\| h_\nabla ^{(i)} (s, x, y) \| _{op} \le h^{(i)} (s,x,y)$.

We conclude that
\begin{gather*}
\left| \left[ \omega \otimes h_\nabla ^{(i)} \left( \frac t {2^k}, x_0, x_1 \right) \otimes \dots \otimes h_\nabla ^{(i)} \left( \frac t {2^k}, x_{2^k-1}, x_{2^k} \right) \otimes \eta(x_{2^k}) \right] \cdot s(x_0, x_1, \dots, s_{x_{2^k}}) \right| \le \\
\le \frac 1 {\sqrt r} \| \omega \| _{E_{x_0} ^*} h ^{(i)} \left( \frac t {2^k}, x_0, x_1 \right) \dots h ^{(i)} \left( \frac t {2^k}, x_{2^k - 1}, x_{2^k} \right) \| \eta (x_{2^k}) \| _{E_{x_{2^k}}} \| s(x_0, x_1, \dots, s_{x_{2^k}}) \| \ ,
\end{gather*}
hence that $W_{t, \omega, \eta} ^{(i)}$ is well defined, trivially linear, and that
\begin{align*}
|W_{t, \omega, \eta} ^{(i)} (s \circ \pi_k)| & \le \frac 1 {\sqrt r} \| \omega \| _{E_{x_0} ^*} \int _{\mathcal C_t (\overline {U_i})} \| (\eta (c(t)) \| (s \circ \pi_k) (c) \| \, \mathrm d w_t ^{(i)} (c) \le \\
& \le \frac 1 {\sqrt r} \| \omega \| _{E_{x_0} ^*} [(\mathrm e ^{-t L^{(i)}} \| \eta \| ^2) (x_0)] ^{\frac 1 2} \, \| (s \circ \pi_k) (c) \| _{\Gamma^2 (\mathcal E | _{\mathcal C_t (\overline {U_i})})} \ ,
\end{align*}
where $\| \eta \|$ denotes the function $M \ni x \mapsto \| \eta(x) \| _{E_x} \in [0, \infty)$.

Since the continuous and bounded cylindrical sections are dense in $\Gamma^2 (\mathcal E | _{\mathcal C_t (\overline {U_i})})$, it follows that $W_{t, \omega, \eta} ^{(i)}$ extends uniquely to a continuous linear functional on this space, therefore there exists a unique $\rho_{t, \omega, \eta} ^{(i)} \in \Gamma^2 (\mathcal E ^* | _{\mathcal C_t (\overline {U_i})})$ such that
\[ W_{t, \omega, \eta} ^{(i)} (\sigma) = \int _{\mathcal C_t (\overline {U_i})} \rho_{t, \omega, \eta} ^{(i)} (c) (\sigma (c)) \, \mathrm d w_t ^{(i)} (c) \]
for every $\sigma \in \Gamma^2 (\mathcal E | _{\mathcal C_t (\overline {U_i})})$. Furthermore, $\| \rho_{t, \omega, \eta} ^{(i)} \| _{\Gamma^2 (\mathcal E ^* | _{\mathcal C_t (\overline {U_i})})} \le \frac 1 {\sqrt r} \| \omega \| _{E_{x_0} ^*} [(\mathrm e ^{-t L^{(i)}} \| \eta \| ^2) (x_0)] ^{\frac 1 2}$.

In the following we shall try to uncover some of the geometrical properties of $\rho_{t, \omega, \eta} ^{(i)}$; more precisely, we shall investigate its connection with the parallel transport in $E$. To this end, let us define the "cut-off parallel transport" $P(x,y) : E_y \to E_x$ for every $(x,y) \in M \times M$ by:
\begin{itemize}[wide]
\item $P(x,y) = $ the parallel transport in $E$ from $y$ to $x$, whenever there exists a unique minimizing geodesic in $M$ defined on $[0,1]$ between $x$ and $y$,
\item $P(x,y) = 0$ otherwise.
\end{itemize}
Let us notice that $P$ so defined is a section in the external tensor product bundle $E \boxtimes E^* \to M \times M$. Since the subset
\[ \{ (x,y) \in M \times M \mid \text{there exists a unique minimizing geodesic between } x \text{ and } y \text{ defined on } [0,1] \} \]
is open in $M \times M$, it will be Borel measurable. Since the section $(x,y) \mapsto P(x,y)$ in $E \boxtimes E^*$ is continuous on this subset, $P$ will be a measurable section in this bundle.

With $P$ so defined, define
\[ P_{t, \omega, \eta, k} (c) = \omega \otimes P \left( c(0), c \left( \frac t {2^k} \right) \right) \otimes \dots \otimes P \left( c \left( \frac {(2^k-1) t} {2^k} \right), c(t) \right) \otimes \eta (c(t)) \]
for every curve $c \in \mathcal C_t$ and every $k \in \mathbb N$. Since $P$ is measurable, with operator norm bounded by $1$ at every point of $M \times M$, we conclude that $P_{t, \omega, \eta, k}$ is a measurable and bounded cylindrical section in the bundle $\mathcal E ^*$. We shall show that $\rho_{t, \omega, \eta}$ is the limit of the sequence $(P_{t, \omega, \eta, k}) _{k \in \mathbb N}$ in the norm topology of $\Gamma^2 (\mathcal E ^*)$.

We shall need to use $P$ (which is not smooth) in contexts requiring differential calculus methods; in order to do this, we shall now introduce some smooth cut-off functions. Let $\kappa : [0, \infty) \to [0,1]$ be a smooth function such that $\kappa | _{[0, \frac 1 3]} = 1$ and $\kappa | _{[\frac 1 2, \infty)} = 0$. Let $\operatorname{injrad}_U : U \to (0, \infty)$ be the injectivity radius function on $U$; we emphasize that this is not the restriction of $\operatorname{injrad}_M$ to $U$, but rather is is computed intrinsically, using the restriction of the Riemannian structure on $U$ (for basic details about the injectivity radius, see p.118 of \cite{Chavel06}). Being continuous and strictly positive, we may find a smooth function $\operatorname{rad} : U \to (0, \infty)$ such that $\operatorname{rad}(x) < \operatorname{injrad}_U (x)$. In particular, $\operatorname{rad}(x) \le d_U (x, \partial U)$ (the distance up to the boundary of $U$, computed using the intrinsic distance $d_U$ of $U$, not using the distance $d$ restricted to $U$). We may now finally define the desired cut-off function $\chi : U \times U \to [0,1]$ by $\chi (x,y) = \kappa \left( \frac {d_U (x,y)^2} {\operatorname{rad}(x) ^2} \right)$. Notice that $\chi$ is smooth (the square is necessary in order to guarantee the smoothness close to the points with $y=x$). We shall also define the cut-off function $\chi_k : \mathcal C_t (\overline U) \to [0,1]$ by
\[ \chi_k (c) = \chi \left( c(0), c\left( \frac t {2^k} \right) \right) \chi \left( c \left( \frac t {2^k} \right), c \left( \frac {2t} {2^k} \right) \right) \dots \chi \left( c \left( \frac {(2^k - 1)t} {2^k} \right), c(t) \right) \]
for every $k \ge 0$.

If $h^{(i)}$ is the intrinsic heat kernel of $\overline {U_i}$, the operators defined by
\[ C(\overline {U_i}) \ni f \mapsto \int _{U_i} h^{(i)} (t, \cdot ,y) \, f(y) \, \mathrm d y \in C(\overline {U_i}) \]
together with the identity operator form a strongly continuous one-parameter semigroup in $C(\overline {U_i})$. This will have a generator (closed operator) that we shall denote by $L^{(i)}$, densely defined, with the domain given by (see \cite{Davies80}, chap. 1)
\[ \operatorname{Dom} (L^{(i)}) = \left\{ f \in C(\overline {U_i}) ; \lim _{t \to 0} \frac 1 t \left( \int _U h^{(i)} (t, \cdot ,y) \, f(y) \, \mathrm d y - f \right) \in C(\overline {U_i}) \right\} \ . \]
We shall denote this semigroup by $(\mathrm e ^{-s L^{(i)}}) _{s \ge 0}$. Integrating twice by parts, it is obvious that $C_0 ^\infty (U_i) \subset \operatorname{Dom} (L^{(i)})$. An essential domain for $L^{(i)}$ is
\[ \mathcal E = \bigcup _{s > 0} \mathrm e ^{-s L^{(i)}} (C(\overline {U_i})) \ . \]
Since the heat semigroup is smoothing (again, one may use \cite{Mizohata57}, or one's favourite Sobolev spaces techniques, to see this), the functions in $\mathcal E$ will be smooth. Since $h^{(i)}$ vanishes on the boundary $\partial {U_i}$, the functions in $\mathcal E$ will also vanish on $\partial {U_i}$.

With exactly the same arguments, but using now the integral kernel $h_\nabla ^{(i)}$ instead of $h ^{(i)}$, we shall obtain a semigroup acting on $\Gamma_c (\overline {U_i})$, the generator of which will be denoted by $L_\nabla ^{(i)}$, and the domain of which will contain $\Gamma_0 ^\infty (E | _{U_i})$. This semigroup will be denoted $(\mathrm e ^{-s L^{(i)} _\nabla}) _{s \ge 0}$.

The crucial tool to be used in the following will be Chernoff's theorem (lemma 3.28 in \cite{Davies80}). For the reader's convenience, we shall give its statement here.

\begin{theorem}[Chernoff]
Assume that $(R_t) _{t \ge 0}$ is a family of contractions in a Banach space $X$, with $R_0 = \operatorname{Id}_X$. Let $\mathcal E \subseteq X$ be an essential domain for the generator $L$ of a strongly continuous one-parameter semigroup $(\mathrm e ^{-t L}) _{t \ge 0}$ on $X$. If $\lim _{t \to 0} \frac 1 t (R_t f - f) = -L f$ for every $f \in \mathcal E$, then $\mathrm e ^{-t L} = \lim _{k \to \infty} \big( R_{\frac t k} \big) ^k$ strongly for every $t \ge 0$. Furthermore, the convergence is uniform with respect to $t$ on bounded subsets of $[0, \infty)$.
\end{theorem}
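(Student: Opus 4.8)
The plan is to prove Chernoff's theorem by isolating a purely algebraic estimate comparing the $n$-th power of a contraction with its ``Poissonized'' exponential, and then feeding this estimate into a Trotter--Kato convergence argument for the associated semigroups. The first and decisive ingredient is the \emph{square-root lemma}: for any contraction $C$ on a Banach space $X$ and any $f \in X$,
\[ \| C^n f - \mathrm e^{n(C-I)} f \| _X \le \sqrt n \, \| (C-I) f \| _X \ . \]
To obtain it I would expand $\mathrm e^{n(C-I)} = \mathrm e^{-n} \sum_{k \ge 0} \frac{n^k}{k!} C^k$, so that $C^n - \mathrm e^{n(C-I)} = \mathrm e^{-n} \sum_{k \ge 0} \frac{n^k}{k!} (C^n - C^k)$; since $C$ is a contraction, a telescoping estimate gives $\| (C^n - C^k) f \| _X \le |n-k| \, \| (C-I) f \| _X$, and applying Cauchy--Schwarz against the Poisson weights $p_k = \mathrm e^{-n} n^k / k!$, whose law has mean $n$ and variance $n$, bounds $\sum_k p_k |n-k|$ by $\sqrt n$.

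Next I would set up the reduction. Writing $s = t/n$ and $L_s = \frac1s (I - R_s)$, one checks that $-L_s$ is a bounded operator generating the uniformly continuous \emph{contraction} semigroup $\mathrm e^{-\tau L_s} = \mathrm e^{-\tau / s} \sum_{k \ge 0} \frac{(\tau/s)^k}{k!} R_s^k$, and that $\mathrm e^{n(R_{t/n} - I)} = \mathrm e^{-t L_{t/n}}$. Applying the square-root lemma with $C = R_{t/n}$ yields
\[ \| R_{t/n}^n f - \mathrm e^{-t L_{t/n}} f \| _X \le \sqrt n \, \| (R_{t/n} - I) f \| _X = \frac{t}{\sqrt n} \, \| L_{t/n} f \| _X \ . \]
For $f$ in the essential domain (core) $\mathcal E$ the hypothesis gives $L_s f \to L f$ as $s \to 0$, so $\| L_{t/n} f \| _X$ stays bounded for $t$ in a fixed interval $[0,T]$ and the right-hand side tends to $0$ uniformly on $[0,T]$. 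It therefore remains to show that $\mathrm e^{-t L_{t/n}} f \to \mathrm e^{-t L} f$, uniformly on compact $t$-intervals.

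This last point is the genuine difficulty, and I expect it to be the main obstacle: the convergence $L_s f \to L f$ is known only on the core $\mathcal E$, while the $L_s$ are bounded and $L$ is unbounded, so the semigroups cannot be compared by a naive domination of $(L_s - L) \mathrm e^{-\tau L} f$. The remedy is to pass to resolvents, which restores uniform bounds. For $\lambda > 0$ both $-L_s$ and $-L$ generate contraction semigroups, so $(\lambda + L_s)^{-1}$ and $(\lambda + L)^{-1}$ exist with norm $\le 1/\lambda$; on the \emph{dense} set $(\lambda + L) \mathcal E$ (dense because $\mathcal E$ is a core, whence $(\lambda + L) \mathcal E$ is dense in $(\lambda + L) \operatorname{Dom}(L) = X$), writing $g = (\lambda + L) f$ with $f \in \mathcal E$, the identity
\[ (\lambda + L_s)^{-1} (\lambda + L) f - f = (\lambda + L_s)^{-1} (L - L_s) f \]
together with $\| (\lambda + L_s)^{-1} \| _{op} \le 1/\lambda$ shows $(\lambda + L_s)^{-1} g \to (\lambda + L)^{-1} g$ there; the uniform bound then extends this to strong resolvent convergence on all of $X$ by a $3\varepsilon$ argument. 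Strong resolvent convergence, with $-L$ generating a $C_0$ contraction semigroup, is exactly the input of the Trotter--Kato approximation theorem, giving $\mathrm e^{-\tau L_s} \to \mathrm e^{-\tau L}$ strongly, uniformly for $\tau$ in compacts; a short diagonal argument (for $t \in [0,T]$ one has $s = t/n \le T/n$, all small) upgrades this to $\sup_{t \in [0,T]} \| \mathrm e^{-t L_{t/n}} f - \mathrm e^{-t L} f \| _X \to 0$.

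Finally I would assemble the pieces. Combining the two preceding steps gives $R_{t/n}^n f \to \mathrm e^{-t L} f$ uniformly on $[0,T]$ for every $f \in \mathcal E$. Since $\| R_{t/n}^n \| _{op} \le 1$ and $\| \mathrm e^{-t L} \| _{op} \le 1$ and $\mathcal E$ is dense in $X$, a concluding $3\varepsilon$ argument extends the convergence to every $f \in X$ with the same uniformity on bounded subsets of $[0, \infty)$, which is the assertion of the theorem. Everything outside the resolvent/Trotter--Kato step is elementary once the square-root lemma is in hand, so the conversion of generator convergence on a mere core into convergence of the semigroups is where I would concentrate the effort.
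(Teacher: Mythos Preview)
The paper does not prove this theorem: it quotes the statement verbatim and cites lemma~3.28 of \cite{Davies80} for the proof. Your argument is correct and is in fact the standard one---the ``square-root lemma'' $\|C^n f - \mathrm e^{n(C-I)} f\| \le \sqrt n\,\|(C-I)f\|$ via the Poisson variance, followed by strong resolvent convergence feeding into Trotter--Kato---which is precisely the route taken in Davies' book. There is nothing to compare against in the paper itself; your proposal simply fills in what the paper outsources to the reference.
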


With all these preparations, we are ready now for the main technical result of this work, from which all the developments announced in the introduction will unravel.

\begin{theorem} \label{application of Chernoff's theorem}
If $R_t ^{(i)} : C(\overline {U_i}) \to C(\overline {U_i})$, with $t \ge 0$, is the family of operators given by $R_0 ^{(i)} f = f$ and
\[ (R_t ^{(i)} f) (x) = \frac 1 r \int _{U_i} \langle h_\nabla ^{(i)} (t, x, y), \chi(x,y) P(x,y) \rangle _{E_x \otimes E_y ^*} f(y) \, \mathrm d y \]
for $f \in C(\overline {U_i})$, then $\lim _{k \to \infty} \left( R_ {\frac t k} ^{(i)} \right) ^k f = \mathrm e^{-t L^{(i)}} f$ for every $f \in C(\overline {U_i})$, uniformly with respect to $t$ from compact subsets of $[0, \infty)$.
\end{theorem}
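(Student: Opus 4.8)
The plan is to verify the two hypotheses of Chernoff's theorem for the family $(R_t^{(i)})_{t \ge 0}$, namely that each $R_t^{(i)}$ is a contraction on $C(\overline{U_i})$ with $R_0^{(i)} = \operatorname{Id}$, and that $\lim_{t \to 0} \frac{1}{t}(R_t^{(i)} f - f) = -L^{(i)} f$ for every $f$ in the essential domain $\mathcal{E} = \bigcup_{s>0} \mathrm{e}^{-s L^{(i)}}(C(\overline{U_i}))$. Once both are in place, Chernoff's theorem delivers the conclusion, with the claimed uniform convergence on compacts, for free.

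For the contraction property I would first rewrite the kernel of $R_t^{(i)}$ in a more transparent form. Using $\langle A, B \rangle_{E_x \otimes E_y^*} = \operatorname{Trace}(A B^*)$ and the fact that $h_\nabla^{(i)}(t,x,y)^* = h_\nabla^{(i)}(t,y,x)$, I expect $\frac{1}{r}\langle h_\nabla^{(i)}(t,x,y), \chi(x,y) P(x,y)\rangle_{E_x \otimes E_y^*}$ to behave like a scalar comparable to the scalar heat kernel $h^{(i)}(t,x,y)$. Indeed, combining Cauchy--Schwarz in $E_x \otimes E_y^*$, the diamagnetic inequality $\|h_\nabla^{(i)}(t,x,y)\|_{op} \le h^{(i)}(t,x,y)$, and $\|P(x,y)\|_{op} \le 1$ together with $0 \le \chi \le 1$, one should obtain a pointwise bound of the integrand by $h^{(i)}(t,x,y)\,|f(y)|$ (up to the normalization that makes the identity have norm one); integrating and using $\int_{U_i} h^{(i)}(t,x,y)\,\mathrm{d}y \le 1$ (since $(\mathrm{e}^{-sL^{(i)}})$ is a contraction semigroup on $C(\overline{U_i})$ applied to the constant function) yields $\|R_t^{(i)} f\|_\infty \le \|f\|_\infty$. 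The continuity of $R_t^{(i)} f$ follows from joint continuity of $h_\nabla^{(i)}$, $\chi$, and $P$ on the open set where the minimizing geodesic is unique, together with dominated convergence.

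The heart of the argument is the generator computation, and this is where I expect the main obstacle. The point is that the extra factors $\frac{1}{r}\langle h_\nabla^{(i)}(t,x,y), \chi(x,y)P(x,y)\rangle$ must, to first order in $t$, collapse back onto the scalar heat kernel $h^{(i)}(t,x,y)$ so that $\frac{1}{t}(R_t^{(i)} f - f) \to -L^{(i)} f$. The mechanism is that as $t \to 0$ the kernel $h_\nabla^{(i)}(t,x,y)$ concentrates near the diagonal $y = x$; there $\chi(x,y) \to 1$ (since $\chi \equiv 1$ for $d_U(x,y)$ small relative to $\operatorname{rad}(x)$) and $P(x,y) \to \operatorname{Id}_{E_x}$ (parallel transport along a vanishing geodesic is the identity to leading order), so $\frac{1}{r}\langle h_\nabla^{(i)}(t,x,y), P(x,y)\rangle_{E_x \otimes E_y^*} \approx \frac{1}{r}\operatorname{Trace}(h_\nabla^{(i)}(t,x,y)) \approx h^{(i)}(t,x,y)$ near the diagonal. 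The corrections coming from the connection and the holonomy contribute at order $O(d_U(x,y)^2)$, which is the same order the Gaussian weight integrates against to produce the order-$t$ term; these corrections must be shown to vanish in the $t \to 0$ limit after division by $t$. Concretely, I would work in normal coordinates centered at $x$, use a small-time Gaussian-type upper bound for $h_\nabla^{(i)}$ and $h^{(i)}$ to truncate the $y$-integral to a geodesic ball of radius $O(\sqrt{t}\,\log(1/t))$ where $\chi \equiv 1$, Taylor-expand $P(x,y) - \operatorname{Id}_{E_x}$ and the relevant kernel entries, and verify that the off-diagonal and holonomy contributions are $o(t)$ uniformly in $x$ on $\overline{U_i}$. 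The uniformity over the compact $\overline{U_i}$ is exactly what allows the convergence to take place in the sup-norm of $C(\overline{U_i})$, which is what Chernoff's theorem requires; securing this uniform control, rather than merely pointwise-in-$x$ asymptotics, is the delicate part. It suffices to carry this out for $f \in \mathcal{E}$, which are smooth and vanish on $\partial U_i$, so no boundary difficulties intrude and one may freely integrate by parts.
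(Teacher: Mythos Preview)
Your overall strategy---verify the two hypotheses of Chernoff's theorem---matches the paper exactly, and your contraction argument is essentially the paper's: Cauchy--Schwarz in the fiber, the inequality $\|A\|_{E_x\otimes E_y^*}\le\sqrt r\,\|A\|_{op}$, the diamagnetic inequality, $\|P\|_{op}\le 1$ and $0\le\chi\le1$, then sub-Markovianity of $h^{(i)}$.

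Where you diverge is the generator computation, and here there is a genuine gap. You propose to establish $\tfrac1t(R_t^{(i)}f-f)\to -L^{(i)}f$ in $C(\overline{U_i})$ via short-time Gaussian asymptotics of $h_\nabla^{(i)}$ in normal coordinates, truncating to balls of radius $O(\sqrt t\,\log(1/t))$ and Taylor-expanding $P(x,y)$. You yourself flag the uniformity in $x$ as ``the delicate part,'' but you do not resolve it, and on a domain with boundary it is a real obstruction: the Dirichlet kernels $h^{(i)}$ and $h_\nabla^{(i)}$ vanish on $\partial U_i$, so the interior Gaussian asymptotics that drive your truncation and expansion degenerate there, and vanishing of $f$ on $\partial U_i$ does not by itself give sup-norm control of the quotient. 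Nor do you explain why the order-$t$ corrections in $h_\nabla^{(i)}$ against $P$ conspire to produce exactly the scalar $-L^{(i)}f$ rather than a curvature-contaminated operator.

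The paper sidesteps all of this with one observation you are missing: for each fixed $x$, the map $y\mapsto\chi(x,y)P(y,x)f(y)$ is a smooth compactly-supported section of $E|_{\overline{U_i}}\otimes E_x^*$, hence lies in the domain of every power of $L_\nabla^{(i)}$, and
\[
(R_t^{(i)}f)(x)=\frac1r\operatorname{Trace}\bigl\{\mathrm e^{-tL_\nabla^{(i)}}[\chi(x,\cdot)P(\cdot,x)f]\bigr\}(x).
\]
This exact identity makes $t\mapsto(R_t^{(i)}f)(x)$ smooth on $[0,\infty)$; one then Taylor-expands to second order with integral remainder. The first derivative at $t=0$ is computed \emph{exactly} by moving $L_\nabla^{(i)}$ onto the smooth section and evaluating at $y=x$, where $\chi\equiv1$ locally and the parallel nature of $P(\cdot,x)$ kills the connection terms, leaving $-(L^{(i)}f)(x)$. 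The second-order remainder is handled by moving two $t$-derivatives (i.e.\ $(L_\nabla^{(i)})^2$) onto $\chi(x,\cdot)P(\cdot,x)f$ and bounding the resulting smooth function of $(x,y)$ by its supremum on the compact $\overline{U_i}\times\overline{U_i}$. No heat-kernel asymptotics, no normal coordinates, no boundary analysis: the uniformity you could not secure comes for free from compactness and the exact semigroup rewrite.
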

\begin{proof}
The proof reduces to the verification of the hypotheses in Chernoff's theorem. To begin with, notice that for $t>0$
\begin{align*}
|(R_t ^{(i)} f) (x)| & \le \int _{U_i} \chi(x,y) \, \frac 1 {\sqrt r} \| h_\nabla ^{(i)} (t,x,y) \| _{E_x \otimes E_y ^*} \, \frac 1 {\sqrt r} \| P(x,y) \| _{E_x \otimes E_y ^*} \, |f(y)| \, \mathrm d y \le \\
& \le \int _{U_i} \chi(x,y) \, \| h_\nabla ^{(i)} (t,x,y) \| _{op} \, \| P(x,y) \| _{op} \, |f(y)| \, \mathrm d y \le \\
& \le \int _{U_i} h ^{(i)} (t,x,y) |f(y)| \, \mathrm d y \ , \le \| f \| _{C(\overline{U_i})} \ ,
\end{align*}
so $R_t ^{(i)}$ is a contraction for every $t \ge 0$ (we have used again the fact that $\| A \| _{V \otimes U^*} \le \sqrt r \| A \| _{op}$, the diamagnetic inequality and the obvious inequalities $\chi \le 1$ și $\| P(x,y) \| _{op} \le 1$). It remains to show that $\lim _{t \to 0} \| \frac 1 t (R_t ^{(i)} f - f) + L ^{(i)} f \| _{C(\overline{U_i})} = 0$ for every $f \in \mathcal E$; to this end, let us show first that $(R_t ^{(i)} f) (x)$ is smooth with respect to $t$ for every $x \in U$. If $\operatorname{Trace}$ denotes the trace in $\operatorname{End} E_x$, notice that
\begin{align*}
(R_t ^{(i)} f) (x) & = \frac 1 r \int _{U_i} \langle h_\nabla ^{(i)} (t, x, y), \chi(x,y) P(x,y) \rangle _{E_x \otimes E_y ^*} f(y) \, \mathrm d y = \\
& = \frac 1 r \int _{U_i} \operatorname{Trace} [h_\nabla ^{(i)} (t, x, y) \chi(x,y) P(x,y) ^* ] f(y) \, \mathrm d y = \\
& = \frac 1 r \int _{U_i} \operatorname{Trace} [h_\nabla ^{(i)} (t, x, y) \chi(x,y) P(y,x) ] f(y) \, \mathrm d y = \\
& = \frac 1 r \operatorname{Trace} \{ \mathrm e ^{-t L^{(i)} _\nabla} [\chi(x, \cdot) \, P(\cdot, x) \, f] \} (x) \ .
\end{align*}
Examining the construction of $\chi$, it is clear that $\chi(x, \cdot) \, P(\cdot, x)$ is a smooth section in $E | _{\overline {U_i}}$ with compact support, the possible singularities of $P(\cdot, x)$ being away from the support of $\chi(x, \cdot)$; since $f$ is smooth, being from $\mathcal E$, their product is a smooth section in $E | _{\overline {U_i}}$ with compact support, therefore in the domain of every power of $L^{(i)} _\nabla$. Under these conditions, we know from the general theory of $1$-parameter $C_0$-semigroups in Banach spaces that the map
\[ [0, \infty) \ni t \mapsto \mathrm e ^{-t L^{(i)} _\nabla} [\chi(x, \cdot) \, P(x, \cdot) \, f] \in \Gamma_c (E | _{\overline{U_i}}) \]
is smooth. If $\{e_1, \dots, e_r\}$ is an orthonormal basis in $E_x$, and if $\delta_x$ is the Dirac measure concentrated at $x$, then $\delta_x \otimes e_i$ is easily seen to be a continuous linear functional on $\Gamma_c (E | _{\overline{U_i}})$ for each $1 \le i \le r$; since
\[ \{ \mathrm e ^{-t L^{(i)} _\nabla} [\chi(x, \cdot) \, P(x, \cdot) \, f] \} (x) = \sum _{i=1} ^r (\delta_x \otimes e_i) \left( \mathrm e ^{-t L^{(i)} _\nabla} [\chi(x, \cdot) \, P(x, \cdot) \, f] \right) e_i \ , \]
the smoothness of the map $[0, \infty) \ni t \mapsto \{ \mathrm e ^{-t L^{(i)} _\nabla} [\chi(x, \cdot) \, P(x, \cdot) \, f] \} (x) \in E_x$ is clear, whence the smoothness of the function $[0, \infty) \ni t \mapsto (R_t ^{(i)} f) (x) \in \mathbb C$ follows immediately.

Expanding with respect to $t$ we have, for every $x \in \overline{U_i}$,
\begin{equation}
(R_t ^{(i)} f) (x) = f(x) + \partial_t |_{t=0} (R_t ^{(i)} f) (x) \, t + \int _0 ^t (t-s) \partial _s ^2 (R_s ^{(i)} f) (x) \, \mathrm d s \ . \label{Taylor expansion}
\end{equation}

For the calculation of the first derivative of $(R_t ^{(i)} f) (x)$ at $t=0$ we have
\begin{align*}
\partial_t |_{t=0} (R_t ^{(i)} f) (x) & = \partial_t |_{t=0} \int _{U_i} \frac 1 r \, \operatorname{Trace} [h_\nabla ^{(i)} (t,x,y) \, \chi(x,y) \, P(x,y)^* ] \, f(y) \, \mathrm d y = \\
& = \frac 1 r \lim _{t \to 0} \int _{U_i} \operatorname{Trace} \{[-L_{\nabla, (y)} ^{(i)} h_\nabla ^{(i)} (t,x,y)] \, \chi(x,y) \, P(y,x) \, f(y) \} \, \mathrm d y = \\
& = \frac 1 r \operatorname{Trace} \left\{ \lim _{t \to 0} \int _{U_i} h_\nabla ^{(i)} (t,x,y) \{ -L_{\nabla, (y)} ^{(i)} [\chi(x,y) \, P(y,x) \, f(y)] \} \, \mathrm d y \right\} = \\
& = \frac 1 r \operatorname{Trace} \left\{ \{ -L_{\nabla, (y)} ^{(i)} [\chi(x,y) \, P(y,x) \, f(y)] \} _{y=x} \right\} .
\end{align*}
Some clarifications about the above calculations are in order. First, the notation $L_{\nabla, y} ^{(i)}$ means that the Laplacian acts with respect to $y \in \overline{U_i}$. Second, we have been able to move the Laplacian from acting on $h_{\nabla} ^{(i)} (t, x, \cdot)$ over to acting on the product $\chi(x, \cdot) \, P(\cdot, x) \, f$ because $\chi (x, \cdot)$ is smooth with compact support, and the other two factors are also smooth inside this support; this is, in fact, the only reason for which the introduction of $\chi$ in our reasoning was necessary.

Since $L_\nabla ^{(i)}$ is a local operator, since $\chi(x, \cdot) = 1$ near $x$, and since $\chi(x, \cdot) \, P(\cdot, x) \, f$ is smooth near $x$, we may replace $L_\nabla ^{(i)}$ with $\nabla^* \nabla$ and we may also drop $\chi$ in order to obtain the simpler formula
\[ \partial_t |_{t=0} (R_t ^{(i)} f) (x) = \frac 1 r \operatorname{Trace} \{ \nabla^* \nabla \, [P(\cdot, x) \, f] \} (x) . \]

Choosing an orthonormal basis $\{e_1, \dots, e_r\}$ in $E_x$, the above formula becomes
\[ \partial_t |_{t=0} (R_t ^{(i)} f) (x) = \frac 1 r \sum _{k=1} ^r \langle \nabla^* \nabla \, [P(\cdot, x) e_k \, f] (x), e_k \rangle _{E_x} . \]

Since $P$ is the parallel transport with respect to $\nabla$, its covariant derivative will be $0$, so
\[ \partial_t |_{t=0} (R_t ^{(i)} f) (x) = \frac 1 r \sum _{k=1} ^r \langle \nabla^* \, [P(\cdot, x) e_k \otimes \mathrm d f] (x), e_k \rangle _{E_x} . \]

It is a known result in Riemannian geometry that $\nabla ^* (\eta \otimes \alpha) = - \nabla _{\alpha^\sharp} \eta - (\mathrm d ^* \alpha) \eta$ for every real smooth $1$-form $\alpha$ and every smooth section $\eta$ in $E$, where $\alpha^\sharp$ is the tangent field dual to the $1$-form $\alpha$ under the usual "musical" isomorphisms. In particular, if $f$ is real
\[ \nabla ^* (\eta \otimes \mathrm d f) = - \nabla _{\operatorname{grad} f} \eta + (\Delta f) \eta \ , \]
whence it follows that
\begin{align*}
\partial_t |_{t=0} (R_t ^{(i)} f) (x) & = \frac 1 r \sum _{k=1} ^r \langle [-\nabla_{\operatorname{grad} f} P(\cdot, x) e_k] (x) + (\Delta f) (x) \, [P(\cdot, x) e_k] (x), e_k \rangle _{E_x} = \\
& = \frac 1 r \sum _{k=1} ^r \langle (\Delta f) (x) \, [P(x, x) e_k] (x), e_k \rangle _{E_x} = (\Delta f) (x) = -(L^{(i)} f) (x) \ ,
\end{align*}
where we have used again that $\nabla_{\operatorname{grad} f} P(\cdot, x) e_k = 0$ for the very same geometrical reasons as above. The result, obtained for real $f$, extends now trivially to complex $f$.

Returning to formula (\ref{Taylor expansion}), we have
\begin{align*}
\| (R_t ^{(i)} f - f) & + t \, L^{(i)} f \| _{C (\overline{U_i})} \le \sup _{x \in \overline{U_i}} \left| \int _0 ^t (t-s) \partial _s ^2 (R_s ^{(i)} f) (x) \, \mathrm d s \right| \le \frac {t^2} 2 \sup _{x \in \overline{U_i}} \sup_{s \in [0,t]} |\partial_s ^2 (R_s ^{(i)} f) (x)| \le \\
& \le \frac {t^2} 2 \sup _{x \in \overline{U_i}} \sup_{s \in [0,t]} \left| \frac 1 r \int _{U_i} \langle \partial_s ^2 h_\nabla ^{(i)} (s, x, y), \chi(x,y) P(x,y) \rangle _{E_x \otimes E_y ^*} f(y) \, \mathrm d y \right| \le \\
& \le \frac 1 r \frac {t^2} 2 \sup _{x \in \overline{U_i}} \sup_{s \in [0,t]} \left| \int _{U_i} \langle h_\nabla ^{(i)} (s, x, y), (L ^{(i)})^2 [\chi(x,y) P(x,y) \overline{f(y)}] \rangle _{E_x \otimes E_y ^*} \, \mathrm d y \right| \le \\
& \le \frac 1 r \frac {t^2} 2 \sup _{x \in \overline{U_i}} \sup_{y \in \overline{U_i}} |(L ^{(i)})^2 [\chi(x,y) P(x,y) \overline{f(y)}]| \ , 
\end{align*}
where in the last inequality we have used the diamagnetic inequality and the sub-markovianity of $h^{(i)}$. Since the function
\[ \overline{U_i} \times \overline{U_i} \ni (x,y) \mapsto |(L ^{(i)})^2 [\chi(x,y) P(x,y) \overline{f(y)}]| \in [0, \infty) \]
is smooth, the double supremum obtained in the last inequality will have a finite value $C \in [0, \infty)$, so
\[ \left\| \frac 1 t (R_t ^{(i)} f - f) + \, L^{(i)} f \right\| _{C (\overline{U_i})} \le \frac C {2r} \, t \to 0 \ , \]
which checks the last hypothesis in Chernoff's theorem, which we may now apply in order to obtain the conclusion of our theorem.
\end{proof}

The following corollary is essentially the above theorem in the trivial bundle $U_i \times \mathbb C$ endowed with the trivial connection given by differentiation (a situation in which the cut-off parallel transport $P$ may be replaced by the constant function $1$). The proof is essentially the same, but in an even simpler context, so we shall omit it.

\begin{corollary} \label{second application of Chernoff's theorem}
If $S_t ^{(i)} : C(\overline {U_i}) \to C(\overline {U_i})$, with $t \ge 0$, is the family of operators given by $S_0 ^{(i)} f = f$ and
\[ (S_t ^{(i)} f) (x) = \int _{U_i} h^{(i)} (t, x, y) \chi(x,y) f(y) \, \mathrm d y \]
for $f \in C(\overline {U_i})$, then $\lim _{k \to \infty} \left( S_ {\frac t k} ^{(i)} \right) ^k f = \mathrm e^{-t L^{(i)}} f$ for every $f \in C(\overline {U_i})$, uniformly with respect to $t$ from compact subsets of $[0, \infty)$.
\end{corollary}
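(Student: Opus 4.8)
The plan is to verify the two hypotheses of Chernoff's theorem for the family $(S_t^{(i)})_{t \ge 0}$, following step by step the proof of Theorem \ref{application of Chernoff's theorem} but specialized to the trivial line bundle $U_i \times \mathbb C$ with $r = 1$, trivial connection, and $P \equiv 1$; in this setting the entire differential-geometric computation that occupied the bulk of the previous proof collapses, and the argument becomes purely scalar. First I would check that each $S_t^{(i)}$ is a contraction on $C(\overline{U_i})$. Since $0 \le \chi \le 1$ and the scalar heat kernel $h^{(i)}$ is non-negative and sub-markovian, one obtains immediately
\[ |(S_t^{(i)} f)(x)| \le \int_{U_i} h^{(i)}(t,x,y)\, |f(y)| \, \mathrm d y \le \| f \|_{C(\overline{U_i})} \ , \]
so that $(S_t^{(i)})_{t \ge 0}$ is a family of contractions with $S_0^{(i)} = \operatorname{Id}$, which is the first hypothesis.

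The core of the proof is to show that $\lim_{t \to 0} \frac 1 t (S_t^{(i)} f - f) = -L^{(i)} f$ for every $f$ in the essential domain $\mathcal E$. The key observation is that $(S_t^{(i)} f)(x) = \{ \mathrm e^{-t L^{(i)}} [\chi(x, \cdot)\, f] \}(x)$, where the semigroup acts on the second variable. Since $f \in \mathcal E$ is smooth and $\chi(x, \cdot)$ is smooth with compact support, the function $\chi(x, \cdot)\, f$ lies in the domain of every power of $L^{(i)}$, hence $t \mapsto (S_t^{(i)} f)(x)$ is smooth and may be Taylor-expanded to first order with integral remainder exactly as in formula (\ref{Taylor expansion}). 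Computing the first derivative at $t = 0$, because $\chi(x, \cdot) = 1$ near $x$ and $L^{(i)}$ is local, I would drop the cut-off to obtain $\partial_t|_{t=0}(S_t^{(i)} f)(x) = -(L^{(i)} f)(x)$. This is precisely the step that in Theorem \ref{application of Chernoff's theorem} required the identity $\nabla^*(\eta \otimes \mathrm d f) = -\nabla_{\operatorname{grad} f} \eta + (\Delta f)\eta$ and the vanishing $\nabla P = 0$; here, with $P \equiv 1$ and a trivial connection, there is simply nothing to differentiate, and the first derivative equals the scalar Laplacian outright.

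It then remains to bound the remainder uniformly. Writing $\partial_s^2 (S_s^{(i)} f)(x)$ by moving $(L^{(i)})^2$ onto the smooth compactly-supported factor $\chi(x, \cdot)\, f$, the remainder is controlled by $\frac{t^2}{2} \sup_{x,y \in \overline{U_i}} |(L^{(i)})^2 [\chi(x,y)\, f(y)]|$; since $(x,y) \mapsto (L^{(i)})^2[\chi(x,y)\, f(y)]$ is smooth on the compact set $\overline{U_i} \times \overline{U_i}$, this supremum is a finite constant $C$, giving
\[ \left\| \frac 1 t (S_t^{(i)} f - f) + L^{(i)} f \right\|_{C(\overline{U_i})} \le \frac C 2\, t \to 0 \ . \]
With both hypotheses verified, Chernoff's theorem yields the claimed convergence $\lim_{k \to \infty} (S_{t/k}^{(i)})^k f = \mathrm e^{-t L^{(i)}} f$, uniformly for $t$ in compact subsets of $[0, \infty)$.

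I expect no genuine obstacle in this corollary, precisely because it is the degenerate case of the main theorem: the only point demanding a little care is the interchange of limit and integral in the first-derivative computation, which is justified, as in the previous proof, by the compact support and smoothness of $\chi(x, \cdot)\, f$ together with the fact that the singularities of $P$ are irrelevant once $P$ is replaced by the constant $1$.
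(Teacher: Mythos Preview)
Your proposal is correct and follows exactly the approach the paper itself indicates: the corollary is the specialization of Theorem \ref{application of Chernoff's theorem} to the trivial line bundle with $r=1$, trivial connection, and $P\equiv 1$, and the paper explicitly omits the proof as being ``essentially the same, but in an even simpler context.'' Your write-up faithfully carries out that specialization, with the contraction estimate, the first-derivative computation, and the remainder bound all matching the structure of the main theorem's proof.
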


\begin{remark}
Before going any further, let us pause for a moment and examine where in the above proof we have used the compactness of $\overline {U_i}$, and whether this compactness assumption is essential or not. It turns out that the only step in the proof where this assumption was used was in the bounding of the function
\[ \overline{U_i} \times \overline{U_i} \ni (x,y) \mapsto |(L ^{(i)})^2 [\chi(x,y) P(x,y) \overline{f(y)}]| \in [0, \infty) \ . \]
If instead of working on $\overline {U_i}$ we had worked on $M$, we would have needed to choose $f$ with the properties that:
\begin{itemize}[wide]
\item $f$ should be in the domain of $L^2$, where $L$ is the Friedrichs extension of the Laplace-Beltrami operator $-\Delta$ of $M$;
\item the product of $f$ with any compactly-supported smooth function should again be in the domain of $L^2$;
\item $f$ should have compact essential support, in order to guarantee the desired boundedness.
\end{itemize}
If $M$ had been metrically complete, then an essential domain for $L$ made of such functions would have been the space of compactly-supported smooth functions (see theorem 11.5 in \cite{Grigor'yan09}). For arbitrary Riemannian manifolds, though, no essential domain satisfying the above three conditions is known to the author, hence the need to treat the problem on relatively compact domains - a technical restriction that we shall see later on how to get rid of.
\end{remark}

The above results allow us to finally approach the statement that we were after, namely to prove that the sequence $(P_{t, \omega, \eta, k}) _{k \in \mathbb N}$ approximates $\rho_{t, \omega, \eta} ^{(i)}$.

\begin{theorem} \label{approximation on regular domains}
The sequence $(P_{t, \omega, \eta, k} | _{\mathcal C_t (\overline {U_i})}) _{k \in \mathbb N}$ converges to $\rho_{t, \omega, \eta} ^{(i)}$ in $\Gamma^2 (\mathcal E ^* | _{\mathcal C_t (\overline {U_i})})$, uniformly with respect to $t$ from bounded subsets of $(0, \infty)$.
\end{theorem}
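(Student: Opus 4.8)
The plan is to exploit the Hilbert-space structure of $\Gamma^2(\mathcal E^* | _{\mathcal C_t(\overline{U_i})})$ together with the two Chernoff approximations already established, never identifying $\rho^{(i)}_{t,\omega,\eta}$ directly. Writing $L_i(t) = \frac 1 r \|\omega\|^2_{E_{x_0}^*}\,(\mathrm e^{-tL^{(i)}}\|\eta\|^2)(x_0)$ for the square of the bound already obtained on $\|\rho^{(i)}_{t,\omega,\eta}\|$, I would expand
\[ \| P_{t,\omega,\eta,k} - \rho^{(i)}_{t,\omega,\eta} \|^2 = \| P_{t,\omega,\eta,k} \|^2 - 2\operatorname{Re}\langle P_{t,\omega,\eta,k}, \rho^{(i)}_{t,\omega,\eta}\rangle + \|\rho^{(i)}_{t,\omega,\eta}\|^2 \]
and show that the first two terms each converge to $L_i(t)$. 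Since the estimate proved above reads $\|\rho^{(i)}_{t,\omega,\eta}\|^2 \le L_i(t)$, the right-hand side then has a non-positive limit; being non-negative, it must vanish, which simultaneously forces $\|\rho^{(i)}_{t,\omega,\eta}\|^2 = L_i(t)$ and the desired convergence $\|P_{t,\omega,\eta,k}-\rho^{(i)}_{t,\omega,\eta}\| \to 0$.

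For the norm term I would compute the fibre norm $\|P_{t,\omega,\eta,k}(c)\|^2_{\mathcal E_c^*}$ exactly as in the well-definedness computation for $W^{(i)}_{t,\omega,\eta}$. Because $P(x,y)$ has $\operatorname{End}$-norm equal to $1$ wherever its defining minimizing geodesic exists and vanishes otherwise, the factors $(\tfrac 1 r)^{2^k+1}$ cancel against the Hilbert–Schmidt norms of the transports and leave $\frac 1 r\|\omega\|^2\,\|\eta(c(t))\|^2\,\mathbf 1[\text{all }2^k\text{ consecutive dyadic points admit a unique minimizing geodesic}]$. Integrating against the finite-dimensional distributions of $w_t^{(i)}$ (products of the scalar kernel $h^{(i)}(\tfrac t{2^k},\cdot,\cdot)$) exhibits $\|P_{t,\omega,\eta,k}\|^2$ as $\frac 1 r\|\omega\|^2$ times the $2^k$-fold iterate at $x_0$, applied to $\|\eta\|^2$, of the operator $g \mapsto \int_{U_i} h^{(i)}(\tfrac t{2^k},\cdot,y)\,\mathbf 1[\exists!\,\text{geod}]\,g(y)\,\mathrm dy$.

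For the cross term I would use Riesz duality: as $\rho^{(i)}_{t,\omega,\eta}$ represents $W^{(i)}_{t,\omega,\eta}$, the fibrewise conjugate-linear sharp map gives $\langle P_{t,\omega,\eta,k}, \rho^{(i)}_{t,\omega,\eta}\rangle = \overline{W^{(i)}_{t,\omega,\eta}(P_{t,\omega,\eta,k}^\sharp)}$, where $P_{t,\omega,\eta,k}^\sharp \in \Gamma^2(\mathcal E|_{\mathcal C_t(\overline{U_i})})$ is a bounded cylindrical section of order $k$ (only the conjugation placement is convention-dependent, and only the real part enters the expansion). Evaluating $W^{(i)}_{t,\omega,\eta}$ on it by its defining formula pairs each factor $h_\nabla^{(i)}(\tfrac t{2^k},x_{j-1},x_j)$ with the corresponding $P(x_{j-1},x_j)$, producing precisely the integrand $\langle h_\nabla^{(i)}(\tfrac t{2^k},x_{j-1},x_j), P(x_{j-1},x_j)\rangle_{E_{x_{j-1}}\otimes E_{x_j}^*}$ defining the operators $R^{(i)}$. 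Thus the cross term equals $\frac 1 r\|\omega\|^2$ times the $2^k$-fold iterate at $x_0$, applied to $\|\eta\|^2$, of essentially $R^{(i)}_{t/2^k}$.

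The only discrepancy between these two iterates and the hypotheses of Corollary \ref{second application of Chernoff's theorem} and Theorem \ref{application of Chernoff's theorem} is that the operators there carry the smooth cut-off $\chi$, whereas $P$ carries the hard cut-off $\mathbf 1[\exists!\,\text{geod}]$; I expect this to be the main obstacle. I would close it by an off-diagonal estimate: on the compact manifold $\overline{U_i}$ the two cut-offs agree on a neighbourhood of the diagonal and differ only where $d(x,y)\ge\delta$ for some $\delta>0$, so Gaussian upper bounds on $h^{(i)}$ give a single-step operator-norm difference bounded by $C\mathrm e^{-\delta^2 2^k/(ct)}$; telescoping over the $2^k$ factors (each operator being a contraction) costs only a factor $2^k$, which the exponential absorbs, so both iterates share the limit of their $\chi$-versions. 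Chernoff's theorem then gives that each converges to $\frac 1 r\|\omega\|^2(\mathrm e^{-tL^{(i)}}\|\eta\|^2)(x_0)=L_i(t)$, uniformly for $t$ in bounded subsets of $(0,\infty)$; substituting into the expansion yields the claim with the stated uniformity.
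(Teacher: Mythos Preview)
Your overall architecture is exactly the paper's: expand $\|P_{t,\omega,\eta,k}-\rho^{(i)}_{t,\omega,\eta}\|^2$, use the a priori bound $\|\rho^{(i)}_{t,\omega,\eta}\|^2\le L_i(t)$, drive the cross term to $L_i(t)$, and conclude that the $\limsup$ is $\le 0$. Two differences are worth noting. First, the paper does \emph{not} compute $\lim_k\|P_{t,\omega,\eta,k}\|^2$; it only uses the trivial upper bound $\|P_{t,\omega,\eta,k}\|^2\le L_i(t)$ (since $\|P(x,y)\|_{op}\le 1$), so only the cross term needs an actual limit. This saves you one of your two Chernoff-type computations. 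Second---and this is the substantive point---the paper handles the mismatch between the hard cut-off built into $P$ and the smooth cut-off $\chi$ in $R^{(i)}$ without any Gaussian off-diagonal estimate. It simply writes $P_{t,\omega,\eta,k}^\ast=\chi_k P_{t,\omega,\eta,k}^\ast+(1-\chi_k)P_{t,\omega,\eta,k}^\ast$; the first piece gives precisely $\tfrac1r\|\omega\|^2\big[(R^{(i)}_{t/2^k})^{2^k}\|\eta\|^2\big](x_0)$, while the second piece is bounded via the diamagnetic inequality by $\tfrac1r\|\omega\|^2\big[\mathrm e^{-tL^{(i)}}\|\eta\|^2-(S^{(i)}_{t/2^k})^{2^k}\|\eta\|^2\big](x_0)$, which tends to $0$ by Corollary~\ref{second application of Chernoff's theorem}. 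Your telescoping argument with Gaussian bounds on $h^{(i)}$ would also work, but it imports an off-diagonal heat-kernel estimate that the paper never states or needs; the $\chi_k$ splitting is entirely self-contained, using only the two Chernoff results already in hand.
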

\begin{proof}
In order to simplify the notations, we shall no longer indicate visually the restriction of functions or sections to $\mathcal C_t (\overline {U_i})$ where this is obvious. In the equality
\begin{align*}
\| \rho_{t, \omega, \eta} ^{(i)} - P_{t, \omega, \eta, k} \| _{\Gamma^2 (\mathcal E ^* | _{\mathcal C_t (\overline {U_i})})} ^2 & = \| \rho_{t, \omega, \eta} ^{(i)} \| _{\Gamma^2 (\mathcal E ^* | _{\mathcal C_t (\overline {U_i})})} ^2 - \langle \rho_{t, \omega, \eta} ^{(i)}, P_{t, \omega, \eta, k} \rangle _{\Gamma^2 (\mathcal E ^* | _{\mathcal C_t (\overline {U_i})})} - \\
& - \langle P_{t, \omega, \eta, k}, \rho_{t, \omega, \eta} ^{(i)} \rangle _{\Gamma^2 (\mathcal E ^* | _{\mathcal C_t (\overline {U_i})})} + \| P_{t, \omega, \eta, k} \| _{\Gamma^2 (\mathcal E ^* | _{\mathcal C_t (\overline {U_i})})} ^2
\end{align*}
the first term is less or equal than $\frac 1 r \| \omega \| _{E_{x_0} ^*} ^2 (\mathrm e ^{-t L^{(i)}} \| \eta \| ^2) (x_0)$. Performing majorizations similar to the ones made when we showed that $W_{t, \omega, \eta} ^{(i)}$ is well defined, in which we only replace $h_\nabla ^{(i)} \left( \frac t {2^k}, x_{j-1}, x_j \right)$ with $P(x_{j-1}, x_j)$ (the operator norm of which is less or equal than $1$), we obtain that
\[ \| P_{t, \omega, \eta, k} (c) \| _{\mathcal E ^* _c} ^2 \le \frac 1 r \| \omega \| _{E_{x_0}^*} ^2 \| \eta (c(t)) \| _{E_{c(t)}} ^2 \ , \]
whence we may bound the last term in the above right-hand side by
\begin{align*}
\| P & _{t, \omega, \eta, k} \| _{\Gamma^2 (\mathcal E ^* | _{\mathcal C_t (\overline {U_i})})} ^2 = \int _{\mathcal C_t (\overline {U_i})} \left\| \omega \otimes P \left( c(0), c \left( \frac t {2^k} \right) \right) \otimes \dots \right. \\
& \left. \dots \otimes P \left( c \left( \frac {(2^k-1) t} {2^k} \right), c(t) \right) \otimes \eta (c(t)) \right\| _{\mathcal E ^* _c} ^2 \, \mathrm d w_t ^{(i)} (c) \le \frac 1 r \| \omega \| _{E_{x_0} ^*} ^2 (\mathrm e ^{-t L^{(i)}} \| \eta \| ^2) (x_0) \ .
\end{align*}

We shall now show that $\lim _{k \to \infty} \langle \rho_{t, \omega, \eta} ^{(i)}, P_{t, \omega, \eta, k} \rangle _{\Gamma^2 (\mathcal E ^* | _{\mathcal C_t (\overline {U_i})})} = \frac 1 r \| \omega \| _{E_{x_0} ^*} ^2 (\mathrm e ^{-t L^{(i)}} \| \eta \| ^2) (x_0)$. If $P_{t, \omega, \eta, k} ^* \in \Gamma^2 (\mathcal E ^*)^* \simeq \Gamma^2 (\mathcal E)$ denotes the element dual to $P_{t, \omega, \eta, k}$ with respect to the Hermitian product on $\Gamma^2 (\mathcal E ^*)$, we have that
\begin{align*}
\langle \rho_{t, \omega, \eta} ^{(i)}, & P_{t, \omega, \eta, k} \rangle _{\Gamma^2 (\mathcal E ^* | _{\mathcal C_t (\overline {U_i})})} = \int _{\mathcal C_t (\overline {U_i})} \rho_{t, \omega, \eta} ^{(i)} (c) [P_{t, \omega, \eta, k} ^* (c)] \, \mathrm d w_t ^{(i)} (c) = W_{t, \omega, \eta} ^{(i)} (P_{t, \omega, \eta, k} ^*) = \\
& = W_{t, \omega, \eta} ^{(i)} (\chi_k P_{t, \omega, \eta, k} ^*) + W_{t, \omega, \eta} ^{(i)} ((1 - \chi_k) P_{t, \omega, \eta, k} ^*) \ .
\end{align*}

The first term in the right-hand side is
\begin{gather*}
\int _{U_i} \mathrm d x_1 \dots \int _{U_i} \mathrm d x_{2^k} \Bigg\langle \omega \otimes h_\nabla ^{(i)} \left( \frac t {2^k}, x_0, x_1 \right) \otimes \dots \otimes h_\nabla ^{(i)} \left( \frac t {2^k}, x_{2^k-1}, x_{2^k} \right) \otimes \eta(x_{2^k}) , \\
\omega \otimes \chi(x_0, x_1) P(x_0, x_1) \otimes \dots \otimes \chi(x_{2^k-1}, x_{2^k}) P(x_{2^k-1}, x_{2^k}) \otimes \eta (x_{2^k}) \Bigg\rangle = \\
= \left( \frac 1 r \right) ^{2^k + 1} \| \omega \| ^2 _{E_{x_0} ^*} \int _{U_i} \mathrm d x_1 \dots \int _{U_i} \mathrm d x_{2^k} \left\langle h_\nabla ^{(i)} \left( \frac t {2^k}, x_0, x_1 \right), \chi(x_0, x_1) P(x_0, x_1) \right\rangle _{E_{x_0} \otimes E_{x_1}^*} \dots \\
\dots \left\langle h_\nabla ^{(i)} \left( \frac t {2^k}, x_{2^k - 1}, x_{2^k} \right), \chi(x_{2^k - 1}, x_{2^k}) P(x_{2^k - 1}, x_{2^k}) \right\rangle _{E_{x_{2^k-1}} \otimes E_{x_{2^k}}^*} \| \eta (x_{2^k}) \| ^2 _{E_{x_{2^k}}} = \\
= \frac 1 r \| \omega \| ^2 _{E_{x_0} ^*} \left[ \left( R_{\frac t {2^k}} ^{(i)} \right) ^{2^k} \| \eta \| ^2 \right] (x_0) \ ,
\end{gather*}
which converges to $\frac 1 r \| \omega \| ^2 _{E_{x_0} ^*} (\mathrm e ^{-t L^{(i)}} \| \eta \| ^2) (x_0)$ uniformly with respect to $t$ from bounded subsets of $(0, \infty)$ according to theorem \ref{application of Chernoff's theorem}.

Using first the diamagnetic inequality, then the Cauchy-Schwarz inequality in the fiber $\mathcal E_c ^*$, the second term in the above right-hand side may be bounded as such:
\begin{gather*}
|W_{t, \omega, \eta} ^{(i)} ((1 - \chi_k) P_{t, \omega, \eta, k} ^*)| = \left| \int _{\mathcal C_t (\overline {U_i})} \rho_{t, \omega, \eta} ^{(i)} (c) [(1 - \chi_k (c)) \, P_{t, \omega, \eta, k} ^* (c)] \, \mathrm d w_t ^{(i)} (c) \right| = \\
= \left| \int _{U_i} \mathrm d x_1 \dots \int _{U_i} \mathrm d x _{2^k} \, [1 - \chi (x_0, x_1) \dots \chi (x_{2^k - 1}, x_{2^k})] \left\langle \omega \otimes h_\nabla ^{(i)} \left( \frac t {2^k}, x_0, x_1 \right) \otimes \dots \right. \right. \\
\left. \left. \dots \otimes h_\nabla ^{(i)} \left( \frac t {2^k}, x_{2^k - 1}, x_{2^k} \right) \otimes \eta(x_{2^k}), \, \omega \otimes P(x_0, x_1) \otimes \dots \otimes P(x_{2^k - 1}, x_{2^k}) \otimes \eta(x_{2^k}) \right\rangle \right| \le \\
\le \frac 1 r \int _{U_i} \mathrm d x_1 \, h^{(i)} \left( \frac t {2^k}, x_0, x_1 \right) \dots \int _{U_i} \mathrm d x _{2^k} \, h^{(i)} \left( \frac t {2^k}, x_{2^k - 1}, x_{2^k} \right) \\
[1 - \chi (x_0, x_1) \dots \chi (x_{2^k - 1}, x_{2^k})] \| \omega \| _{E_{x_0} ^*} ^2   \| \eta(x_{2^k}) \| _{E_{x_{2^k}}} ^2 = \\
= \frac 1 r \| \omega \| _{E_{x_0} ^*} ^2 \int _{\mathcal C_t (\overline {U_i})} (1 - \chi_k (c)) \| \eta(c(t)) \| _{E_{c(t)}} ^2 \, \mathrm d w_t ^{(i)} (c) = \frac 1 r \| \omega \| _{E_{x_0} ^*} ^2 (\mathrm e ^{-tL^{(i)}} \| \eta \| ^2) (x_0) - \\
- \frac 1 r \| \omega \| _{E_{x_0} ^*} ^2 \int _{U_i} \mathrm d x_1 \, h^{(i)} \left( \frac t {2^k}, x_0, x_1 \right) \chi(x_0, x_1) \dots \\
\dots \int _{U_i} \mathrm d x_{2^k} \, h^{(i)} \left( \frac t {2^k}, x_{2^k - 1}, x_{2^k} \right) \chi(x_{2^k - 1}, x_{2^k}) \| \eta (x_{2^k}) \| ^2 _{E_{x_{2^k}}} = \\
= \frac 1 r \| \omega \| _{E_{x_0} ^*} ^2 (\mathrm e ^{-tL^{(i)}} \| \eta \| ^2) (x_0) - \frac 1 r \| \omega \| _{E_{x_0} ^*} ^2 \left[ \left( S_{\frac t {2^k}} ^{(i)} \right) ^{2^k} \| \eta \| ^2 \right] (x_0) \ ,
\end{gather*}
which converges to $0$ uniformly with respect to $t$ from bounded subsets of $(0, \infty)$ according to corollary \ref{second application of Chernoff's theorem}.

Passing to the limit, we obtain that
\[ \lim _{k \to \infty} \langle \rho_{t, \omega, \eta} ^{(i)}, P_{t, \omega, \eta, k} \rangle _{\Gamma^2 (\mathcal E ^* | _{\mathcal C_t (\overline {U_i})})} = \frac 1 r \| \omega \| ^2 _{E_{x_0} ^*} \lim _{k \to \infty} \left\langle \delta_{x_0}, \left( R_{\frac t {2^k}} ^{(i)} \right) ^{2^k} \| \eta \| ^2 \right\rangle = \frac 1 r \| \omega \| ^2 _{E_{x_0} ^*} (\mathrm e ^{-t L^{(i)}} \| \eta \| ^2) (x_0) \]
uniformly with respect to $t$ from bounded subsets of $(0, \infty)$, whence it follows that
\[ \varlimsup _{k \to \infty} \| \rho_{t, \omega, \eta} ^{(i)} - P_{t, \omega, \eta, k} \| _{\Gamma^2 (\mathcal E ^* | _{\mathcal C_t (\overline {U_i})})} ^2 \le 0 \]
uniformly with respect to $t$ from bounded subsets of $(0, \infty)$, the conclusion being now immediate.
\end{proof}

We already knew that $\rho_{t, \omega, \eta} ^{(i)} \in \Gamma^2 (\mathcal E ^* | _{\mathcal C_t (\overline {U_i})})$, but the convergence that we have just proved allows us to obtain an even stronger conclusion, which will be useful later on, in particular in proving the Feynman-Kac formula in fiber bundles.

\begin{corollary} \label{pointwise norm above regular domains}
$\rho_{t, \omega, \eta} ^{(i)} \in \Gamma^\infty (\mathcal E ^* | _{\mathcal C_t (\overline {U_i})})$ și $\| \rho_{t, \omega, \eta} ^{(i)} (c) \| _{\mathcal E ^* _c} = \frac 1 {\sqrt r} \, \| \omega \| _{E_{x_0}^*} \, \| \eta (c(t)) \| _{E_{c(t)}}$ for almost every $c \in \mathcal C_t (\overline {U_i})$ (with respect to the measure $w_t ^{(i)}$).
\end{corollary}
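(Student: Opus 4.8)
The plan is to transfer the explicit $\Gamma^2$-convergence $P_{t, \omega, \eta, k} \to \rho_{t, \omega, \eta}^{(i)}$ supplied by theorem \ref{approximation on regular domains} down to the level of fibers: from this convergence I would extract a subsequence converging in $\mathcal E_c^*$ for almost every $c$, and then compute the fiberwise norms of the approximants \emph{exactly}, rather than merely estimating them. The first step is therefore to pin down $\| P_{t, \omega, \eta, k}(c) \|_{\mathcal E_c^*}$. Repeating the fiberwise computation already carried out when $W_{t, \omega, \eta}^{(i)}$ was shown to be well defined, but with each factor $h_\nabla^{(i)}(\cdot)$ replaced by the corresponding cut-off parallel transport $P(\cdot,\cdot)$ — the computation being identical since it uses only orthonormality of the chosen bases and the tensor structure — one obtains
\[
\left\| P_{t, \omega, \eta, k}(c) \right\|_{\mathcal E_c^*}^2 = \left( \frac{1}{r} \right)^{2^k+1} \| \omega \|_{E_{x_0}^*}^2 \prod_{j=0}^{2^k-1} \left\| P \left( c \left( \frac{jt}{2^k} \right), c \left( \frac{(j+1)t}{2^k} \right) \right) \right\|_{E \otimes E^*}^2 \| \eta(c(t)) \|_{E_{c(t)}}^2 \ .
\]

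The decisive observation is that whenever $P(x,y)$ is an honest parallel transport it is an isometry $E_y \to E_x$, so as an element of $E_x \otimes E_y^*$ it has Hilbert-Schmidt norm exactly $\sqrt r$, since $\operatorname{Trace}(P(x,y)^* P(x,y)) = \operatorname{Trace}(\operatorname{Id}_{E_y}) = r$; when no unique minimizing geodesic between $x$ and $y$ exists, the corresponding factor is $0$. Next I would show that for \emph{every} curve $c \in \mathcal C_t (\overline{U_i})$ all $2^k$ factors are isometries once $k$ is large enough. Since $\overline{U_i}$ is compact, the injectivity radius attains a strictly positive minimum on it, so there is $\rho_0 > 0$ such that any two points of $\overline{U_i}$ at distance less than $\rho_0$ are joined by a unique minimizing geodesic; since $c$ is uniformly continuous, for all sufficiently large $k$ every pair of consecutive dyadic points satisfies $d(c(\frac{jt}{2^k}), c(\frac{(j+1)t}{2^k})) < \rho_0$. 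For such $k$ the product above equals $r^{2^k}$, and the prefactor $(\frac 1 r)^{2^k+1}$ collapses it to $\frac 1 r$, whence $\| P_{t, \omega, \eta, k}(c) \|_{\mathcal E_c^*} = \frac{1}{\sqrt r} \| \omega \|_{E_{x_0}^*} \| \eta(c(t)) \|_{E_{c(t)}}$. Thus this quantity is the pointwise limit — in fact the eventual constant value — of $\| P_{t, \omega, \eta, k}(c) \|_{\mathcal E_c^*}$.

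On the other hand, the $\Gamma^2$-convergence of theorem \ref{approximation on regular domains}, by the standard argument used in the completeness proof for $\Gamma^2(\mathcal E)$, yields a subsequence $(P_{t, \omega, \eta, k_m})_m$ converging to $\rho_{t, \omega, \eta}^{(i)}$ in the fiber $\mathcal E_c^*$ for almost every $c$. Taking fiber norms and using their continuity, I would match the two limits and conclude that
\[
\| \rho_{t, \omega, \eta}^{(i)}(c) \|_{\mathcal E_c^*} = \frac{1}{\sqrt r} \| \omega \|_{E_{x_0}^*} \| \eta(c(t)) \|_{E_{c(t)}}
\]
for almost every $c \in \mathcal C_t(\overline{U_i})$. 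Essential boundedness is then immediate: since $\eta \in \Gamma_{cb}(E)$ is bounded, the right-hand side is dominated by $\frac{1}{\sqrt r} \| \omega \|_{E_{x_0}^*} \sup_{x \in M} \| \eta(x) \|_{E_x} < \infty$, so $\rho_{t, \omega, \eta}^{(i)} \in \Gamma^\infty(\mathcal E^* | _{\mathcal C_t(\overline{U_i})})$.

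The main subtlety I anticipate is that the fiberwise norm of the approximants must be computed \emph{exactly} rather than merely bounded: the earlier well-definedness estimate relied on the operator-norm inequality $\| P(x,y) \|_{op} \le 1$, whereas here one needs the sharp Hilbert-Schmidt value $\sqrt r$ of an isometry, together with the geometric fact that for large $k$ none of the factors degenerates to $0$. It is precisely the exact cancellation of the normalization constant $(\frac 1 r)^{2^k+1}$ against $r^{2^k}$ that produces the clean constant $\frac{1}{\sqrt r}$, and thus turns the a priori inequality $\| \rho_{t, \omega, \eta}^{(i)} \|_{\Gamma^2} \le \frac 1 {\sqrt r} \| \omega \| [(\mathrm e^{-t L^{(i)}} \| \eta \|^2)(x_0)]^{1/2}$ into an exact pointwise identity.
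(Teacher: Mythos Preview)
Your proposal is correct and follows essentially the same approach as the paper: extract from the $\Gamma^2$-convergence of theorem \ref{approximation on regular domains} a subsequence converging fiberwise almost everywhere, observe that for each curve the cut-off parallel transports are honest isometries once $k$ is large (the paper invokes the argument of lemma \ref{approximation of continuous curves}, you invoke compactness of $\overline{U_i}$ to bound the injectivity radius below — same idea), compute the resulting fiber norm exactly, and pass to the limit. Your write-up is in fact more explicit than the paper's about the Hilbert--Schmidt norm $\sqrt r$ of an isometry and the cancellation of $(\tfrac 1 r)^{2^k+1}$ against $r^{2^k}$, which the paper simply asserts.
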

\begin{proof}
We know that $P_{t, \omega, \eta, k} | _{\mathcal C_t (\overline {U_i})} \to \rho_{t, \omega, \eta} ^{(i)}$ in $\Gamma^2 (\mathcal E ^* | _{\mathcal C_t (\overline {U_i})})$. After choosing a measurable representative of $\rho_{t, \omega, \eta} ^{(i)}$, which we shall denote $\rho_{t, \omega, \eta} ^{(i)}$ again, for simplicity, there exists a subsequence $(k_l) _{l \in \mathbb N} \subseteq \mathbb N$ and a co-null subset $C \subseteq \mathcal C_t (\overline{U_i})$ such that $P_{t, \omega, \eta, k_l} (c) \to \rho_{t, \omega, \eta} ^{(i)} (c)$ for every $c \in C$ (the proof of this fact is almost identical to the proof of the completeness of $\Gamma^2 (\mathcal E)$). Reusing the argument in lemma \ref{approximation of continuous curves}, for each curve $c \in C$ there exists $k_c \in \mathbb N$ such that $P(c(\frac {jt} {2^k}), c(\frac {(j+1)t} {2^k})) \ne 0$ (therefore it is precisely the parallel transport between the two points on $c$) for all $k \ge k_c$ and $0 \le j \le 2^k-1$. It follows that
\[ \| P_{t, \omega, \eta, k} (c) \| _{\mathcal E ^* _c} = \frac 1 {\sqrt r} \, \| \omega \| _{E_{x_0}^*} \, \| \eta (c(t)) \| _{E_{c(t)}} \]
for $c \in C$ and $k \ge k_c$, whence it follows that
\[ \| \rho_{t, \omega, \eta} ^{(i)} (c) \| _{\mathcal E ^* _c} = \lim _{l \to \infty} \| P_{t, \omega, \eta, k_l} (c) \| _{\mathcal E ^* _c} = \frac 1 {\sqrt r} \, \| \omega \| _{E_{x_0}^*} \, \| \eta (c(t)) \| _{E_{c(t)}} \le \frac 1 {\sqrt r} \, \| \omega \| _{E_{x_0}^*} \, \| \eta \| _{\Gamma_{cb} (E)} \ . \]
\end{proof}

So far we have worked on the spaces of curves $\mathcal C_t (\overline{U_i})$ associated to the relatively compact domains $U_i$ that exhaust $M$. We have done this purely for technical reasons, the compactness having to do with the need to bound certain continuous functions in the proof of theorem \ref{application of Chernoff's theorem} that were difficult to control otherwise. It is the appropriate moment now to get rid of this exhaustion and obtain global geometrical objects and global pieces of relationship among these objects.

\begin{theorem}
If $i \le j$ then $\rho_{t, \omega, \eta} ^{(j)} | _{\mathcal C_t (\overline{U_i})} = \rho_{t, \omega, \eta} ^{(i)}$.
\end{theorem}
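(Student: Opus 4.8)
The plan is to exploit the fact that the approximating sequence $(P_{t,\omega,\eta,k})_{k \in \mathbb N}$ is a \emph{global} object, defined on all of $\mathcal C_t$ by means of the cut-off parallel transport $P$ on $M \times M$, and hence does not depend on the exhausting domain. Since $i \le j$ implies $\mathcal C_t(\overline{U_i}) \subseteq \mathcal C_t(\overline{U_j})$, one and the same sequence of sections is involved in both approximations: by theorem \ref{approximation on regular domains} applied once with index $j$ and once with index $i$, we have simultaneously
\[ P_{t,\omega,\eta,k}|_{\mathcal C_t(\overline{U_j})} \to \rho_{t,\omega,\eta}^{(j)} \text{ in } \Gamma^2(\mathcal E^*|_{\mathcal C_t(\overline{U_j})}) \]
(with respect to $w_t^{(j)}$) and
\[ P_{t,\omega,\eta,k}|_{\mathcal C_t(\overline{U_i})} \to \rho_{t,\omega,\eta}^{(i)} \text{ in } \Gamma^2(\mathcal E^*|_{\mathcal C_t(\overline{U_i})}) \]
(with respect to $w_t^{(i)}$). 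The idea is then to extract from this single sequence a subsequence that converges pointwise almost everywhere for \emph{both} measures at once; on the resulting common co-null set the two pointwise limits $\rho_{t,\omega,\eta}^{(i)}(c)$ and $\rho_{t,\omega,\eta}^{(j)}(c)$ must coincide, which is exactly the asserted equality.

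To carry this out, first extract (as in the completeness proof of $\Gamma^2(\mathcal E)$, or as in corollary \ref{pointwise norm above regular domains}) a subsequence $(k_l)_{l \in \mathbb N}$ such that $P_{t,\omega,\eta,k_l} \to \rho_{t,\omega,\eta}^{(j)}$ holds $w_t^{(j)}$-almost everywhere on $\mathcal C_t(\overline{U_j})$. This subsequence still converges to $\rho_{t,\omega,\eta}^{(i)}$ in $\Gamma^2(\mathcal E^*|_{\mathcal C_t(\overline{U_i})})$, so from it one extracts a further subsequence $(k_{l_m})_{m \in \mathbb N}$ converging to $\rho_{t,\omega,\eta}^{(i)}$ $w_t^{(i)}$-almost everywhere on $\mathcal C_t(\overline{U_i})$. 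The decisive point is the domination $w_t^{(i)} \le w_t^{(j)}|_{\mathcal C_t(\overline{U_i})}$, which is the exact analogue of the inequality $w_t^{(i)} \le w_t|_{\mathcal C_t(\overline{U_i})}$ recorded earlier and which follows in the same manner from the monotonicity $h^{(i)} \le h^{(j)}$ of the Dirichlet heat kernels under the inclusion $U_i \subseteq U_j$. Because of this domination, the $w_t^{(j)}$-null set off which $(k_{l_m})_m$ fails to converge to $\rho_{t,\omega,\eta}^{(j)}$, once intersected with $\mathcal C_t(\overline{U_i})$, is also $w_t^{(i)}$-null; discarding it together with the $w_t^{(i)}$-null set coming from the second extraction leaves a $w_t^{(i)}$-co-null subset of $\mathcal C_t(\overline{U_i})$ on which $\rho_{t,\omega,\eta}^{(i)}(c) = \lim_m P_{t,\omega,\eta,k_{l_m}}(c) = \rho_{t,\omega,\eta}^{(j)}(c)$ in $\mathcal E^*_c$. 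Hence $\rho_{t,\omega,\eta}^{(j)}|_{\mathcal C_t(\overline{U_i})} = \rho_{t,\omega,\eta}^{(i)}$ as elements of $\Gamma^2(\mathcal E^*|_{\mathcal C_t(\overline{U_i})})$.

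The only genuinely delicate step is the transfer of null sets between the two measures, that is, the inequality $w_t^{(i)} \le w_t^{(j)}|_{\mathcal C_t(\overline{U_i})}$; everything else is a routine double extraction of almost-everywhere-convergent subsequences. I expect this measure-domination to be the main obstacle to a fully rigorous write-up, since it must be traced back carefully to the monotone dependence of the Dirichlet heat kernels (and hence of the cylindrical finite-dimensional distributions defining $w_t^{(i)}$ and $w_t^{(j)}$) on the underlying relatively compact domain.
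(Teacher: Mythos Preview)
Your argument is correct. Both your proof and the paper's rely on the same two ingredients: the fact that the approximating sequence $(P_{t,\omega,\eta,k})_k$ is globally defined and independent of the domain, and the measure domination $w_t^{(i)} \le w_t^{(j)}|_{\mathcal C_t(\overline{U_i})}$ (which indeed follows from $h^{(i)} \le h^{(j)}$ exactly as $w_t^{(i)} \le w_t$ follows from $h^{(i)} \le h$). The difference is purely tactical: the paper stays in $\Gamma^2$ throughout and writes a single triangle inequality
\[
\|\rho^{(j)}|_{\mathcal C_t(\overline{U_i})} - \rho^{(i)}\| \le \|\rho^{(j)}|_{\mathcal C_t(\overline{U_i})} - P_k|_{\mathcal C_t(\overline{U_i})}\| + \|P_k|_{\mathcal C_t(\overline{U_i})} - \rho^{(i)}\|,
\]
then uses the measure domination to bound the first term by the corresponding norm on $\mathcal C_t(\overline{U_j})$, after which both terms vanish by theorem~\ref{approximation on regular domains}. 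Your route via a double extraction of almost-everywhere convergent subsequences reaches the same conclusion and is equally valid; the paper's version is a little shorter because it avoids subsequences altogether, while yours has the minor advantage of making the pointwise identification explicit. The ``delicate step'' you flag is exactly the one the paper also uses, so you have correctly identified the crux.
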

\begin{proof}
For all $k \in \mathbb N$ we have
\begin{align*}
& \| \rho_{t, \omega, \eta} ^{(j)} | _{\mathcal C_t (\overline{U_i})} - \rho_{t, \omega, \eta} ^{(i)} \| _{\Gamma^2 (\mathcal E ^* | _{\mathcal C_t (\overline{U_i})})} \le \\
& \le \| \rho_{t, \omega, \eta} ^{(j)} | _{\mathcal C_t (\overline{U_i})} - P_{t, \omega, \eta, k} | _{\mathcal C_t (\overline{U_i})} \| _{\Gamma^2 (\mathcal E ^* | _{\mathcal C_t (\overline{U_i})})} + \| P_{t, \omega, \eta, k} | _{\mathcal C_t (\overline{U_i})} - \rho_{t, \omega, \eta} ^{(i)} \| _{\Gamma^2 (\mathcal E ^* | _{\mathcal C_t (\overline{U_i})})} = \\
& = \sqrt{ \int _{\mathcal C_t (\overline{U_i})} \| \rho_{t, \omega, \eta} ^{(j)} (c) - P_{t, \omega, \eta, k} (c) \| _{\mathcal E ^* _c} ^2 \, \mathrm d w_t ^{(i)} (c) } + \| P_{t, \omega, \eta, k} | _{\mathcal C_t (\overline{U_i})} - \rho_{t, \omega, \eta} ^{(i)} \| _{\Gamma^2 (\mathcal E ^* | _{\mathcal C_t (\overline{U_i})})} \le \\
& \le \sqrt{ \int _{\mathcal C_t (\overline{U_i})} \| \rho_{t, \omega, \eta} ^{(j)} (c) - P_{t, \omega, \eta, k} (c) \| _{\mathcal E ^* _c} ^2 \, \mathrm d w_t ^{(j)} | _{\mathcal C_t (\overline{U_i})} (c) } + \| P_{t, \omega, \eta, k} | _{\mathcal C_t (\overline{U_i})} - \rho_{t, \omega, \eta} ^{(i)} \| _{\Gamma^2 (\mathcal E ^* | _{\mathcal C_t (\overline{U_i})})} \le \\
& \le \sqrt{ \int _{\mathcal C_t (\overline{U_j})} \| \rho_{t, \omega, \eta} ^{(j)} (c) - P_{t, \omega, \eta, k} (c) \| _{\mathcal E ^* _c} ^2 \, \mathrm d w_t ^{(j)} (c) } + \| P_{t, \omega, \eta, k} | _{\mathcal C_t (\overline{U_i})} - \rho_{t, \omega, \eta} ^{(i)} \| _{\Gamma^2 (\mathcal E ^* | _{\mathcal C_t (\overline{U_i})})} = \\
& = \| \rho_{t, \omega, \eta} ^{(j)} - P_{t, \omega, \eta, k} | _{\mathcal C_t (\overline{U_j})} \| _{\Gamma^2 (\mathcal E ^* | _{\mathcal C_t (\overline{U_j})})} + \| P_{t, \omega, \eta, k} | _{\mathcal C_t (\overline{U_i})} - \rho_{t, \omega, \eta} ^{(i)} \| _{\Gamma^2 (\mathcal E ^* | _{\mathcal C_t (\overline{U_i})})} \ ,
\end{align*}
whence the conclusion is clear with the aid of theorem \ref{approximation on regular domains}.
\end{proof}

This compatibility relationship among the sections $(\rho _{t, \omega, \eta} ^{(j)}) _{j \in \mathbb N}$ insures that the global section defined by $\rho _{t, \omega, \eta} = \lim _{j \to \infty} \rho _{t, \omega, \eta} ^{(j)}$ is well defined, and that $\rho _{t, \omega, \eta} | _{\mathcal C_t (\overline {U_j})} = \rho _{t, \omega, \eta} ^{(j)}$. In defining $\rho _{t, \omega, \eta}$ as we have done it is understood that we work with measurable representatives of the equivalence classes $\rho _{t, \omega, \eta} ^{(j)} \in \Gamma^\infty (\mathcal E ^* | _{\mathcal C_t (\overline{U_j})}) \subseteq \Gamma^\infty (\mathcal E ^*)$, and that changing these representatives in turn changes the limit only on some null subset, therefore its equivalence class stays the same.

\begin{theorem} \label{pointwise norm}
The section $\rho _{t, \omega, \eta}$ so defined is measurable and essentially bounded. Furthermore, $\| \rho _{t, \omega, \eta} (c) \| _{\mathcal E^* _c} = \frac 1 {\sqrt r} \| \omega \| _{E_{x_0} ^*} \| \eta_{c(t)} \| _{E_{c(t)}}$ for almost all $c \in \mathcal C_t$.
\end{theorem}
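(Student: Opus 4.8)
The plan is to bootstrap the two facts already available on the regular domains $\mathcal C_t(\overline{U_j})$ — the identity $\rho_{t,\omega,\eta}|_{\mathcal C_t(\overline{U_j})} = \rho_{t,\omega,\eta}^{(j)}$ recorded just above, and the conclusion of Corollary \ref{pointwise norm above regular domains} that $\rho_{t,\omega,\eta}^{(j)} \in \Gamma^\infty(\mathcal E^*|_{\mathcal C_t(\overline{U_j})})$ with $\|\rho_{t,\omega,\eta}^{(j)}(c)\|_{\mathcal E^*_c} = \frac 1{\sqrt r}\|\omega\|_{E_{x_0}^*}\|\eta(c(t))\|_{E_{c(t)}}$ for $w_t^{(j)}$-almost every $c$ — up to global statements on $(\mathcal C_t, w_t)$. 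The single genuinely new ingredient, and the step I expect to be the main obstacle, is the passage from the intrinsic measures $w_t^{(j)}$ to the global Wiener measure $w_t$: the inequality $w_t^{(j)} \le w_t|_{\mathcal C_t(\overline{U_j})}$ points the wrong way, so a $w_t^{(j)}$-null set need not be $w_t$-null. I would resolve this using the monotone convergence $w_t^{(j)} \uparrow w_t$, which follows from $h^{(j)} \uparrow h$: on the $\pi$-system of cylinder sets the cylinder measures are integrals of products of heat-kernel values, so $w_t^{(j)}$ increases to $w_t$ there by the monotone convergence theorem, and since $\sup_j w_t^{(j)}$ is again a finite measure agreeing with $w_t$ on this generating $\pi$-system, the two coincide on all Borel sets.

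For measurability I would first note that $\mathcal C_t = \bigcup_j \mathcal C_t(\overline{U_j})$, since any $c \in \mathcal C_t$ has compact image $c([0,t])$, hence contained in some exhausting $U_j$. Each $\mathcal C_t(\overline{U_j})$ being Borel (indeed closed), and $\rho_{t,\omega,\eta}$ coinciding there with a measurable representative of $\rho_{t,\omega,\eta}^{(j)}$, it follows by decomposing $\mathcal C_t$ into the countable disjoint Borel family $\mathcal C_t(\overline{U_0})$, $\mathcal C_t(\overline{U_j}) \setminus \mathcal C_t(\overline{U_{j-1}})$ that $\rho_{t,\omega,\eta}$ is measurable on all of $\mathcal C_t$.

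For the norm identity, set $N = \{ c \in \mathcal C_t : \|\rho_{t,\omega,\eta}(c)\|_{\mathcal E^*_c} \neq \frac 1{\sqrt r}\|\omega\|_{E_{x_0}^*}\|\eta(c(t))\|_{E_{c(t)}} \}$; the goal is $w_t(N)=0$. I would fix $j$ and prove $w_t^{(k)}(N \cap \mathcal C_t(\overline{U_j})) = 0$ for every $k \ge j$: on $\mathcal C_t(\overline{U_j}) \subseteq \mathcal C_t(\overline{U_k})$ one has, $w_t^{(k)}$-almost everywhere, both $\rho_{t,\omega,\eta} = \rho_{t,\omega,\eta}^{(k)}$ and the norm formula for $\rho_{t,\omega,\eta}^{(k)}$ from Corollary \ref{pointwise norm above regular domains}, hence the formula for $\rho_{t,\omega,\eta}$ itself. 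Letting $k \to \infty$ and using $w_t^{(k)} \uparrow w_t$ yields $w_t(N \cap \mathcal C_t(\overline{U_j})) = \lim_k w_t^{(k)}(N \cap \mathcal C_t(\overline{U_j})) = 0$, and since $N = \bigcup_j (N \cap \mathcal C_t(\overline{U_j}))$ is a countable union of $w_t$-null sets, $w_t(N) = 0$, which is exactly the asserted pointwise norm identity.

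Finally, essential boundedness drops out of the identity: as $\eta \in \Gamma_{cb}(E)$ is bounded, the identity gives $\|\rho_{t,\omega,\eta}(c)\|_{\mathcal E^*_c} \le \frac 1{\sqrt r}\|\omega\|_{E_{x_0}^*}\|\eta\|_{\Gamma_{cb}(E)}$ for $w_t$-almost all $c$, so $\rho_{t,\omega,\eta} \in \Gamma^\infty(\mathcal E^*)$. The only delicate point throughout remains the upward transfer of null sets from $w_t^{(k)}$ to $w_t$, which is why the argument is organized around the tail indices $k \ge j$ rather than the single index $j$.
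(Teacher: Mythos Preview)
Your argument is correct and follows the same skeleton as the paper: measurability via the countable Borel cover $\mathcal C_t = \bigcup_j \mathcal C_t(\overline{U_j})$ together with $\rho_{t,\omega,\eta}|_{\mathcal C_t(\overline{U_j})} = \rho_{t,\omega,\eta}^{(j)}$, and the norm identity by appeal to Corollary~\ref{pointwise norm above regular domains}. Where you go further is in the passage from ``$w_t^{(j)}$-almost everywhere'' to ``$w_t$-almost everywhere'': the paper's proof dispatches the norm identity in a single sentence (``a consequence of corollary~\ref{pointwise norm above regular domains}''), leaving this transfer implicit, whereas you have correctly isolated it as the only nontrivial step and handled it via the monotone convergence $w_t^{(k)} \uparrow w_t$ on the generating $\pi$-system of cylinders. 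Your organization around tail indices $k \ge j$ is exactly what is needed, and the $\pi$-$\lambda$ argument for $\sup_k w_t^{(k)} = w_t$ is sound (the monotonicity $w_t^{(i)} \le w_t^{(j)}$ on all Borel sets for $i \le j$, needed to ensure $\sup_k w_t^{(k)}$ is a measure, follows from the same domain-monotonicity used for $w_t^{(j)} \le w_t$). In short, you have not taken a different route so much as filled in a step the paper elides.
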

\begin{proof}
Since $M = \bigcup _{j \in \mathbb N} U_j$, it follows that $\mathcal C_t = \bigcup _{j \in \mathbb N} \mathcal C_t (\overline {U_j})$. Since $\rho _{t, \omega, \eta} | _{\mathcal C_t (\overline {U_j})} = \rho _{t, \omega, \eta} ^{(j)}$, it follows that if $S \subseteq \mathcal E^*$ is a measurable subset then
\begin{align*}
\rho _{t, \omega, \eta} ^{-1} (S) = \bigcup _{j \in \mathbb N} [\rho _{t, \omega, \eta} ^{-1} (S) \cap \mathcal C_t (\overline {U_j})] = \bigcup _{j \in \mathbb N} [\rho _{t, \omega, \eta} ^{(j)}]^{-1} (S)
\end{align*}
is measurable because each $\rho _{t, \omega, \eta} ^{(j)}$ is measurable.

The value of the pointwise norm of $\rho_{t, \omega, \eta}$ is a consequence of corollary \ref{pointwise norm above regular domains}.
\end{proof}

We have seen in theorem \ref{approximation on regular domains} that $P_{t, \omega, \eta, k} | _{\mathcal C_t (\overline {U_j})} \to \rho _{t, \omega, \eta} | _{\mathcal C_t (\overline {U_j})}$ in $\Gamma^2 (\mathcal E ^* | _{\mathcal C_t (\overline {U_i})})$, for all $j \in \mathbb N$. We shall now prove that it is possible to remove the restriction to $\mathcal C_t (\overline {U_j})$ and obtain the convergence globally, on the whole $\mathcal C_t$.

\begin{theorem} \label{approximation of rho}
The sequence $(P_{t, \omega, \eta, k}) _{k \in \mathbb N}$ converges to $\rho_{t, \omega, \eta}$ în $\Gamma^2 (\mathcal E ^*)$, uniformly with respect to $t \in (0,T]$ for all $T>0$.
\end{theorem}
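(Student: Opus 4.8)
The plan is to reduce the global statement to the local convergence of Theorem \ref{approximation on regular domains} by splitting the integral over $\mathcal C_t$ into its part over a regular domain $\mathcal C_t (\overline {U_i})$ and the complementary tail, and then to recover uniformity in $t$ by a Dini-type argument for the scalar heat semigroups. Throughout I would write $g = \| \eta \| ^2$ for the bounded continuous function $M \ni x \mapsto \| \eta(x) \| _{E_x} ^2$ and use the marginal identity $\int _{\mathcal C_t} g(c(t)) \, \mathrm d w_t (c) = (\mathrm e ^{-t L} g)(x_0)$, where $L$ is the Friedrichs extension of $-\Delta$ on $M$. Two facts drive everything. The first is the pointwise domination
\[ \| P_{t, \omega, \eta, k} (c) - \rho_{t, \omega, \eta} (c) \| _{\mathcal E ^* _c} ^2 \le \frac 4 r \| \omega \| _{E_{x_0} ^*} ^2 \, g(c(t)) \qquad (w_t\text{-a.e.}) \ , \]
which follows from the bound on $\| P_{t, \omega, \eta, k} (c) \| _{\mathcal E ^* _c}$ established earlier together with the pointwise norm identity of Theorem \ref{pointwise norm}. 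The second is the squeeze
\[ (\mathrm e ^{-t L^{(i)}} g)(x_0) \le \int _{\mathcal C_t (\overline {U_i})} g(c(t)) \, \mathrm d w_t (c) \le (\mathrm e ^{-t L} g)(x_0) \ , \]
a consequence of $w_t ^{(i)} \le w_t | _{\mathcal C_t (\overline {U_i})}$ and of $\mathcal C_t (\overline {U_i}) \subseteq \mathcal C_t$. It is convenient to set $D_i (t) = (\mathrm e ^{-t L} g)(x_0) - (\mathrm e ^{-t L^{(i)}} g)(x_0) \ge 0$.

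For the tail, the domination, the marginal identity, and the left half of the squeeze give
\begin{align*}
\int _{\mathcal C_t \setminus \mathcal C_t (\overline {U_i})} \| P_{t, \omega, \eta, k} (c) - \rho_{t, \omega, \eta} (c) \| _{\mathcal E ^* _c} ^2 \, \mathrm d w_t (c) & \le \frac 4 r \| \omega \| _{E_{x_0} ^*} ^2 \left[ (\mathrm e ^{-t L} g)(x_0) - \int _{\mathcal C_t (\overline {U_i})} g(c(t)) \, \mathrm d w_t (c) \right] \le \\
& \le \frac 4 r \| \omega \| _{E_{x_0} ^*} ^2 D_i (t) \ ,
\end{align*}
a bound that is independent of $k$.

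For the interior I would split the $w_t$-integral against the non-negative measure $w_t - w_t ^{(i)}$,
\begin{align*}
\int _{\mathcal C_t (\overline {U_i})} \| P_{t, \omega, \eta, k} - \rho_{t, \omega, \eta} \| _{\mathcal E ^* _c} ^2 \, \mathrm d w_t = {} & \int _{\mathcal C_t (\overline {U_i})} \| P_{t, \omega, \eta, k} - \rho_{t, \omega, \eta} ^{(i)} \| _{\mathcal E ^* _c} ^2 \, \mathrm d w_t ^{(i)} + {} \\
& + \int _{\mathcal C_t (\overline {U_i})} \| P_{t, \omega, \eta, k} - \rho_{t, \omega, \eta} \| _{\mathcal E ^* _c} ^2 \, \mathrm d (w_t - w_t ^{(i)}) \ ,
\end{align*}
where in the first term I have used $\rho_{t, \omega, \eta} | _{\mathcal C_t (\overline {U_i})} = \rho_{t, \omega, \eta} ^{(i)}$ ($w_t ^{(i)}$-a.e.). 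That first term is exactly $\| P_{t, \omega, \eta, k} - \rho_{t, \omega, \eta} ^{(i)} \| _{\Gamma^2 (\mathcal E ^* | _{\mathcal C_t (\overline {U_i})})} ^2$, which tends to $0$ as $k \to \infty$ uniformly for $t$ in bounded subsets of $(0, \infty)$ by Theorem \ref{approximation on regular domains}; the second term is again bounded by $\frac 4 r \| \omega \| _{E_{x_0} ^*} ^2 D_i (t)$, using the domination and then the squeeze to control $\int _{\mathcal C_t (\overline {U_i})} g(c(t)) \, \mathrm d (w_t - w_t ^{(i)})$.

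It remains to make the $D_i$-terms uniformly small in $t$, and here lies the only genuine difficulty, namely the mismatch between $w_t$ and $w_t ^{(i)}$; I expect it to be resolved by Dini's theorem. Since $h^{(i)} \uparrow h$ pointwise and monotonically and $g \ge 0$, for each fixed $t$ one has $(\mathrm e ^{-t L^{(i)}} g)(x_0) \uparrow (\mathrm e ^{-t L} g)(x_0)$, so $D_i (t) \downarrow 0$; moreover $D_i$ is non-negative, decreasing in $i$, and continuous on the compact interval $[0, T]$ (both semigroup evaluations are continuous in $t$, sharing the value $g(x_0)$ at $t = 0$ because $x_0$ is an interior point of every $U_i$). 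Dini's theorem then yields $\sup _{t \in [0, T]} D_i (t) \to 0$ as $i \to \infty$. Assembling the three estimates,
\[ \sup _{t \in (0, T]} \| P_{t, \omega, \eta, k} - \rho_{t, \omega, \eta} \| _{\Gamma^2 (\mathcal E ^*)} ^2 \le \sup _{t \in (0, T]} \| P_{t, \omega, \eta, k} - \rho_{t, \omega, \eta} ^{(i)} \| _{\Gamma^2 (\mathcal E ^* | _{\mathcal C_t (\overline {U_i})})} ^2 + \frac 8 r \| \omega \| _{E_{x_0} ^*} ^2 \sup _{t \in (0, T]} D_i (t) \ ; \]
given $\varepsilon > 0$ I would first fix $i$ so large (using Dini) that the last term is below $\frac \varepsilon 2$, and then, with $i$ fixed, choose $k$ so large (using Theorem \ref{approximation on regular domains}) that the first term is below $\frac \varepsilon 2$, which establishes the convergence uniformly in $t \in (0, T]$ for every $T > 0$.
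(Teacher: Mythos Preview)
Your proof is correct and follows essentially the same decomposition as the paper's: split $\mathcal C_t$ into $\mathcal C_t(\overline{U_i})$ and its complement, split the interior $w_t$-integral against $w_t^{(i)}$ and $w_t - w_t^{(i)}$, and reduce everything to the local result plus the scalar defect $D_i(t)$. The only difference is that you invoke Dini's theorem to make $D_i$ uniformly small on $[0,T]$, whereas the paper writes out the underlying finite-subcover argument by hand (continuity of $t\mapsto D_i(t)$, local uniformity on neighbourhoods $V_{\varepsilon,t}$, compactness of $[0,T]$); this is a stylistic packaging of the same idea, not a different route.
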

\begin{proof}
If $\omega = 0$ the result is trivially true; we shall assume then that $\omega \ne 0$. Let $\varepsilon > 0$. Using the fact that $\rho _{t, \omega, \eta} | _{\mathcal C_t (\overline {U_j})} = \rho _{t, \omega, \eta} ^{(j)}$ for all $j \in \mathbb N$, we may write that
\begin{align*}
\| P_{t, \omega, \eta, k} - \rho_{t, \omega, \eta} \| _{\Gamma^2 (\mathcal E ^*)} ^2 & = \| P_{t, \omega, \eta, k} - \rho_{t, \omega, \eta} \| _{\Gamma^2 (\mathcal E ^* | _{\mathcal C_t (\overline {U_j})})} ^2 + \| P_{t, \omega, \eta, k} - \rho_{t, \omega, \eta} \| _{\Gamma^2 (\mathcal E ^* | _{\mathcal C_t \setminus \mathcal C_t (\overline {U_j})})} ^2 \le \\
& \le \| P_{t, \omega, \eta, k} - \rho_{t, \omega, \eta} \| _{\Gamma^2 (\mathcal E ^* | _{\mathcal C_t (\overline {U_j})})} ^2 + \frac 4 r \| \omega \| _{E_{x_0} ^*} ^2 \int \limits _{\mathcal C_t \setminus \mathcal C_t (\overline {U_j})} \| \eta(c(t)) \| _{E_{c(t)}} ^2 \, \mathrm d w_t (c)
\end{align*}
and the integral on the right-hand side is
\begin{align*}
\int _{\mathcal C_t \setminus \mathcal C_t (\overline {U_j})} \| \eta(c(t)) \| _{E_{c(t)}} ^2 \, \mathrm d w_t (c) & = \int _{\mathcal C_t} \| \eta(c(t)) \| _{E_{c(t)}} ^2 \, \mathrm d w_t (c) - \int _{\mathcal C_t (\overline {U_j})} \| \eta(c(t)) \| _{E_{c(t)}} ^2 \, \mathrm d w_t (c) \le \\
& \le \int _{\mathcal C_t} \| \eta(c(t)) \| _{E_{c(t)}} ^2 \, \mathrm d w_t (c) - \int _{\mathcal C_t (\overline {U_j})} \| \eta(c(t)) \| _{E_{c(t)}} ^2 \, \mathrm d w_t ^{(j)} (c) \ .
\end{align*}

On the other hand,
\begin{align*}
\| P_{t, \omega, \eta, k} & - \rho_{t, \omega, \eta} \| _{\Gamma^2 (\mathcal E ^* | _{\mathcal C_t (\overline {U_j})})} ^2 = \\
& = \int _{\mathcal C_t (\overline {U_j})} \| P_{t, \omega, \eta, k} (c) - \rho_{t, \omega, \eta} (c) \| _{\mathcal E ^* _c} ^2 \, \mathrm d w_t ^{(j)} (c) + \\
& + \int _{\mathcal C_t (\overline {U_j})} \| P_{t, \omega, \eta, k} (c) - \rho_{t, \omega, \eta} (c) \| _{\mathcal E ^* _c} ^2 \, \mathrm d (w_t - w_t ^{(j)}) (c) \le \\
& \le \| P_{t, \omega, \eta, k} - \rho_{t, \omega, \eta} \| _{\Gamma^2 (\mathcal E ^* | _{\mathcal C_t (\overline {U_j}), w_t ^{(j)}})} ^2 + \frac 4 r \| \omega \| _{E_{x_0} ^*} ^2 \int _{\mathcal C_t (\overline {U_j})} \| \eta(c(t)) \| _{E_{c(t)}} ^2 \, \mathrm d (w_t - w_t ^{(j)}) (c) \le \\
& \le \| P_{t, \omega, \eta, k} - \rho_{t, \omega, \eta} ^{(j)} \| _{\Gamma^2 (\mathcal E ^* | _{\mathcal C_t (\overline {U_j}), w_t ^{(j)}})} ^2 + \\
& + \frac 4 r \| \omega \| _{E_{x_0} ^*} ^2 \int _{\mathcal C_t} \| \eta(c(t)) \| _{E_{c(t)}} ^2 \, \mathrm d w_t (c) - \frac 4 r \| \omega \| _{E_{x_0} ^*} ^2 \int _{\mathcal C_t (\overline {U_j})} \| \eta(c(t)) \| _{E_{c(t)}} ^2 \, \mathrm d w_t ^{(j)} (c) \ .
\end{align*}

We conclude that
\begin{align*}
\| P_{t, \omega, \eta, k} - \rho_{t, \omega, \eta} \| _{\Gamma^2 (\mathcal E ^*)} ^2 & \le \| P_{t, \omega, \eta, k} - \rho_{t, \omega, \eta} ^{(j)} \| _{\Gamma^2 (\mathcal E ^* | _{\mathcal C_t (\overline {U_j}), w_t ^{(j)}})} ^2 + \\
& + \frac 8 r \| \omega \| _{E_{x_0} ^*} ^2 \left( \int \limits _{\mathcal C_t} \| \eta(c(t)) \| _{E_{c(t)}} ^2 \, \mathrm d w_t (c) - \int \limits _{\mathcal C_t (\overline {U_j})} \| \eta(c(t)) \| _{E_{c(t)}} ^2 \, \mathrm d w_t ^{(j)} (c) \right) .
\end{align*}

We shall choose $j$ by a careful examination of the difference between these two latter integrals:
\begin{gather*}
\int _{\mathcal C_t} \| \eta(c(t)) \| _{E_{c(t)}} ^2 \, \mathrm d w_t (c) - \int _{\mathcal C_t (\overline {U_j})} \| \eta(c(t)) \| _{E_{c(t)}} ^2 \, \mathrm d w_t ^{(j)} (c) = \\
= \int _M h(t, x_0, x) \, \| \eta(x) \| _{E_x} ^2 \, \mathrm d x - \int _{\overline{U_j}} h^{(j)} (t, x_0, x) \, \| \eta(x) \| _{E_x} ^2 \, \mathrm d x \ ,
\end{gather*}
and since $h^{(j)} \to h$ pointwise and monotonically, for each $t>0$ there exists an $j_{\varepsilon, t} \in \mathbb N$ such that
\begin{gather*}
\left| \int _M h(t, x_0, x) \, \| \eta(x) \| _{E_x} ^2 \, \mathrm d x - \int _{U_j} h^{(j)} (t, x_0, x) \, \| \eta(x) \| _{E_x} ^2 \, \mathrm d x \right| = \\
= \left| \langle \delta_{x_0}, \mathrm e ^{-t L} \| \eta \| ^2 \rangle - \langle \delta_{x_0}, \mathrm e ^{-t L^{(j)}} (\| \eta \| | _{\overline {U_j}} ^2) \rangle \right| < \frac {r \varepsilon} {16 \| \omega \| _{E_{x_0} ^*} ^2}
\end{gather*}
for every $j \ge j_{\varepsilon, t}$, where $\langle \cdot, - \rangle$ denotes the duality pairing between the space of bounded continuous functions and its dual (to which $\delta_{x_0}$ belongs). Since the two heat semigroups seen above are strongly continuous, the above expression that contains them is continuous with respect to $t \in [0,T]$, therefore every $t$ from $[0,T]$ admits an open neighbourhood $V _{\varepsilon, t} \subseteq [0,T]$ such that
\[ \left| \langle \delta_{x_0}, \mathrm e ^{-s L} \| \eta \| ^2 \rangle - \langle \delta_{x_0}, \mathrm e ^{-s L^{(j)}} (\| \eta \| | _{\overline {U_j}} ^2) \rangle \right| < \frac {r \varepsilon} {16 \| \omega \| _{E_{x_0} ^*} ^2} \]
for every $j \ge j_{\varepsilon, t}$ uniformly with respect to $s \in V _{\varepsilon, t}$. Since $[0,T]$ is compact, we may cover it with a finite number of such neighbourhoods, $[0,T] = \bigcup _{i=1} ^{N_{\varepsilon}} V_{\varepsilon, t_i}$. Choosing $j_\varepsilon = \max \{ j_{\varepsilon, t_1}, \dots, j_{\varepsilon, t_N} \}$ we have
\[ \left| \langle \delta_{x_0}, \mathrm e ^{-t L} \| \eta \| ^2 \rangle - \langle \delta_{x_0}, \mathrm e ^{-t L^{(j_\varepsilon)}} (\| \eta \| | _{\overline {U_j}} ^2) \rangle \right| < \frac {r \varepsilon} {16 \| \omega \| _{E_{x_0} ^*} ^2} \]
for all $t \in (0,T]$.

Using now theorem \ref {approximation on regular domains} on $U_{j_\varepsilon}$, we may find a $k_\varepsilon \in \mathbb N$ such that
\[ \| P_{t, \omega, \eta, k} - \rho_{t, \omega, \eta} \| _{\Gamma^2 (\mathcal E ^* | _{\mathcal C_t (\overline {U_j}), w_t ^{(j)}})} ^2 < \frac \varepsilon 2 \]
for all $k \ge k_\varepsilon$, uniformly with respect to $t \in (0,T]$.

Combining all these upper bounds we obtain that
\[ \| P_{t, \omega, \eta, k} - \rho_{t, \omega, \eta} \| _{\Gamma^2 (\mathcal E ^*)} ^2 < \varepsilon \]
for all $k \ge k_\varepsilon$, uniformly with respect to $t \in (0,T]$, whence the conclusion is clear.
\end{proof}

\begin{remark}
The section $\rho_{t, \omega, \eta}$ does not depend on the exhaustion with regular domains used to construct it, because it is the limit of the sequence of sections $(P_{t, \omega, \eta, k}) _{k \in \mathbb N}$ that does not depend on any exhaustion.
\end{remark}

\section{Getting rid of the auxiliary section}

Let us remember now that one the aims of this article is to give a new construction of the stochastic parallel transport. Whatever this object may be, it is clear that the stochastic parallel transport of a vector $v \in E_{x_0}$ should not depend on any section $\eta \in \Gamma_{cb} (E)$. Indeed, if one looks back at the proof of theorem \ref{approximation on regular domains}, one sees that $\eta$ was needed only for technical reasons, in order for us to be able to use theorem \ref{application of Chernoff's theorem} and corollary \ref{second application of Chernoff's theorem}, which in turn were based upon Chernoff's theorem. Since $\eta$ is seen to play an exclusively auxiliary role, in the following we shall concentrate our efforts on eliminating it from our results. In order to achieve this, we shall need a number of useful auxiliary results.

Let us begin by showing that $\rho_{t, \omega, \eta}$, which so far has been constructed under the hypothesis that $\eta \in \Gamma_{cb} (E)$, can be extended to a significantly larger class of sections $\eta$. More precisely, let
\[ \Gamma _t ' = \left\{ \eta : M \to E \text{ measurable section } \mid \int _M h(t, x_0, x) \, \| \eta_x \| _{E_x} ^2 \, \mathrm d x < \infty \right\} \]
and let $\Gamma_t$ be the quotient of $\Gamma_t '$ under equality almost everywhere. It is easy to show that $\Gamma_t$ is a Hilbert space, the Hermitian product being
\[ \langle \eta, \eta' \rangle _{\Gamma_t} = \int _M h(t, x_0, x) \, \langle \eta_x, \eta'_x \rangle _{E_x} \, \mathrm d x = \int _{\mathcal C_t} \langle \eta_{c(t)}, \eta'_{c(t)} \rangle _{E_{c(t)}} \, \mathrm d w_t (c) \ . \]
It is also clear that $\Gamma^\infty (E) \subseteq \Gamma_t$.

\begin{theorem}
The space $\Gamma_{cb} (E)$ is dense in $\Gamma_t$.
\end{theorem}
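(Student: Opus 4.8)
The plan is to recognize $\Gamma_t$ as the space $L^2(M, \mu_t; E)$ of square-integrable sections for the finite Borel measure $\mathrm d\mu_t = h(t, x_0, \cdot)\,\mathrm dx$ (finite since $\int_M h(t,x_0,x)\,\mathrm dx \le 1$ by sub-markovianity) and to reach $\Gamma_{cb}(E)$ through two reductions: first from a general $\eta \in \Gamma_t$ to a bounded, compactly supported measurable section, and then from the latter to a continuous one. For the first reduction I would fix $\eta \in \Gamma_t$ and truncate it simultaneously in magnitude and in support. Let $T_N : E \to E$ be the fibrewise radial truncation $T_N(v) = v$ for $\|v\|_{E_x} \le N$ and $T_N(v) = Nv/\|v\|_{E_x}$ for $\|v\|_{E_x} > N$; this map is continuous and fibre-preserving, so $T_N \circ \eta$ is measurable. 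Using the exhaustion $M = \bigcup_m U_m$ by relatively compact domains, set $\zeta_m = (T_m \circ \eta)\,\mathbf 1_{U_m}$, a measurable section bounded by $m$ and supported in $\overline{U_m}$. For almost every $x$ one has $x \in U_m$ and $\|\eta_x\|_{E_x} \le m$ for all large $m$, so $\zeta_{m,x} = \eta_x$ eventually; since moreover $\|\zeta_{m,x} - \eta_x\|_{E_x}^2 \le 4\|\eta_x\|_{E_x}^2$ with $x \mapsto \|\eta_x\|_{E_x}^2$ in $L^1(\mu_t)$, dominated convergence gives $\zeta_m \to \eta$ in $\Gamma_t$. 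It thus suffices to approximate in $\Gamma_t$ an arbitrary bounded measurable section $\zeta$ supported in some $\overline{U_i}$.

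For the second reduction I would localize to a compact set on which the weight is controlled. Pick $U_{i+1} \supseteq \overline{U_i}$; since $h(t, x_0, \cdot)$ is smooth, it is bounded by some $C < \infty$ on the compact set $\overline{U_{i+1}}$, so for every section $\phi$ supported in $U_{i+1}$,
\[ \|\zeta - \phi\|_{\Gamma_t}^2 = \int_{U_{i+1}} h(t, x_0, x)\,\|\zeta_x - \phi_x\|_{E_x}^2\,\mathrm dx \le C\,\|\zeta - \phi\|_{\Gamma^2(E|_{U_{i+1}})}^2 . \]
Being bounded and compactly supported in the relatively compact domain, $\zeta$ belongs to $\Gamma^2(E|_{U_{i+1}})$, and since $U_{i+1}$ is itself a Riemannian manifold the density of $\Gamma_0(E|_{U_{i+1}})$ in $\Gamma^2(E|_{U_{i+1}})$ recalled at the start of this section supplies compactly supported smooth sections $\phi_m \to \zeta$ in $\Gamma^2(E|_{U_{i+1}})$. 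Extended by zero these belong to $\Gamma_c(E) \subseteq \Gamma_{cb}(E)$, and the displayed inequality forces $\phi_m \to \zeta$ in $\Gamma_t$. A routine $\varepsilon/2$ argument combining the two reductions then yields the density of $\Gamma_{cb}(E)$ in $\Gamma_t$.

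The one genuine subtlety, and the reason the localization in the second reduction is indispensable, is that on an arbitrary Riemannian manifold the weight $h(t, x_0, \cdot)$ need not be globally bounded — its values may grow without bound where the geometry degenerates — so one cannot dominate the $\Gamma_t$-norm by the $\Gamma^2(E)$-norm over all of $M$ at once. Confining every approximant to a fixed compact set, on which the continuous weight is automatically bounded, removes this obstruction; with that in hand the statement becomes standard measure theory for a weighted $L^2$ space of sections.
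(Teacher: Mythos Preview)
Your proof is correct, but it follows a different path from the paper's. The paper argues by orthogonal complement: it takes $\eta \in \Gamma_{cb}(E)^\perp$, observes that $f\eta' \in \Gamma_{cb}(E)$ for every $f \in C_b(M)$ and $\eta' \in \Gamma_{cb}(E)$, and deduces that $\langle \eta'_x, \eta_x\rangle_{E_x} = 0$ almost everywhere for each such $\eta'$; it then manufactures, via Tietze extension, global bounded continuous frames $\{\eta_i^1,\dots,\eta_i^r\}$ that are orthonormal over each closed trivialization patch $\overline{V_i}$, and concludes $\eta = 0$. Your argument is instead a direct approximation: radial truncation plus restriction to an exhausting sequence of compacta reduces to bounded compactly supported sections by dominated convergence, and then local boundedness of the smooth weight $h(t,x_0,\cdot)$ lets you dominate the $\Gamma_t$-norm by the ordinary $\Gamma^2$-norm on a fixed relatively compact domain, where the standard density of $\Gamma_0$ in $\Gamma^2$ finishes the job. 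Your route is more constructive and in fact proves the stronger statement that $\Gamma_0(E)$ (hence $\Gamma_c(E)$) is already dense in $\Gamma_t$; the paper's route stays closer to the Hilbert-space duality methods used elsewhere in the article and avoids invoking smoothness of the heat kernel, relying instead on the algebraic structure of frames. Both are sound; the key geometric input you use---that $h(t,x_0,\cdot)$ is continuous, hence bounded on compacta---is exactly what makes the localization work and is available from the regularity results recalled earlier in the paper.
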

\begin{proof}
It is obvious that $\Gamma_{cb} (E) \subseteq \Gamma_t$. Let $\eta \in \Gamma_{cb} (E) ^\perp$; we shall show that $\eta=0$. If $f \in C_b (M)$ and $\eta' \in \Gamma_{cb} (E)$, then $f \eta' \in \Gamma_{cb} (E) \subseteq \Gamma_t$ and
\[ 0 = \langle f \eta', \eta \rangle _{\Gamma_t} = \int _M f(x) \, \langle \eta'_x, \eta_x \rangle _{E_x} \, \mathrm d x \ . \]
Since $f$ is arbitrary, we conclude that there exists a co-null subset $C_{\eta'} \subseteq M$ such that $\langle \eta'_x, \eta_x \rangle _{E_x} = 0$ for every $x \in C_{\eta'}$ (it is understood that we work with some measurable representative of $\eta$).

Since $M$ is separable, we may cover it with a countable family of trivialization open domains $(V_i)_{i \in \mathbb N}$; by possibly shrinking them we may assume that each $\overline {V_i}$ is a (closed) domain of trivialization. Choose an orthonormal frame in $E | _{\overline {V_i}}$ and use Tietze's theorem to extend it continuously to the whole $M$; let $\{\eta_i ^1, \dots, \eta_i ^r\}$ be the resulting continuous global frame in $E$; the sections making it up will belong to $\Gamma_{cb} (E)$. Fix $i \in \mathbb N$. There exists a co-null $C_{i,j} \subseteq M$ such that $\langle \eta_i ^j (x), \eta_x \rangle _{E_x} = 0$ for all $x \in C_{i,j}$. Letting $C_i = \bigcap _{j=1} ^r C_{i,j} \cap V_i$, we immediately obtain that $\langle u, \eta_x \rangle _{E_x} = 0$ for all $x \in C_i$ and $u \in E_x$, whence $\eta |_{V_i} = 0$ almost everywhere and therefore $\eta = 0$ in $\Gamma_t$.
\end{proof}

If we integrate the result of theorem \ref{pointwise norm} with respect to $c \in \mathcal C_t$, we obtain that
\[ \| \rho_{t, \omega, \eta} \| _{\Gamma^2 (\mathcal E^*)} \le \frac 1 {\sqrt r} \, \| \omega \| _{E_{x_0}} \| \eta \| _{\Gamma _t} \ , \]
when $\eta \in \Gamma_t$; since we have just shown that $\Gamma_{cb} (E)$ is dense in $\Gamma_t$, the map $\Gamma_{cb} (E) \ni \eta \mapsto \rho_{t, \omega, \eta} \in \Gamma^2 (\mathcal E ^*)$ extends to a continuous linear map $\Gamma_t \ni \eta \mapsto \rho_{t, \omega, \eta} \in \Gamma^2 (\mathcal E ^*)$.

\begin{lemma}
If $\eta \in \Gamma_t$ then $\| \rho _{t, \omega, \eta} (c) \| _{\mathcal E ^* _c} = \frac 1 {\sqrt r} \, \| \omega \| _{E_{x_0}} \, \| \eta _{c(t)} \| _{E_{c(t)}}$ for almost all $c \in \mathcal C_t$.
\end{lemma}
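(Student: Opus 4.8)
The plan is to transfer the pointwise-norm identity from $\Gamma_{cb}(E)$, where it is already available via theorem \ref{pointwise norm}, to the whole of $\Gamma_t$ by an approximation argument, the principal issue being the coordination of two distinct almost-everywhere convergences living on two different measure spaces. First I would fix $\eta \in \Gamma_t$ and, invoking the density of $\Gamma_{cb}(E)$ in $\Gamma_t$ established just above, pick a sequence $(\eta_n)_{n \in \mathbb N} \subseteq \Gamma_{cb}(E)$ with $\eta_n \to \eta$ in $\Gamma_t$. Since the map $\Gamma_t \ni \eta \mapsto \rho_{t,\omega,\eta} \in \Gamma^2(\mathcal E^*)$ is continuous (as noted immediately before this lemma), we obtain $\rho_{t,\omega,\eta_n} \to \rho_{t,\omega,\eta}$ in $\Gamma^2(\mathcal E^*)$. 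By the same Riesz--Fischer extraction used in the completeness proof for $\Gamma^2(\mathcal E)$, there is a subsequence along which $\rho_{t,\omega,\eta_{n_k}}(c) \to \rho_{t,\omega,\eta}(c)$ in the fiber $\mathcal E^*_c$ for $w_t$-almost all $c$, and hence, by continuity of the fiber norm, $\| \rho_{t,\omega,\eta_{n_k}}(c) \|_{\mathcal E^*_c} \to \| \rho_{t,\omega,\eta}(c) \|_{\mathcal E^*_c}$ for $w_t$-almost all $c$.

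Next I would handle the right-hand side of the desired identity. Convergence $\eta_n \to \eta$ in $\Gamma_t$ is precisely $L^2$-convergence of $\|\eta_n\|$ towards $\|\eta\|$ with respect to the measure $\mathrm d\mu(x) = h(t,x_0,x)\,\mathrm d x$ on $M$; passing to a further subsequence (which I still denote $(\eta_{n_k})$) therefore gives $\|\eta_{n_k,x}\|_{E_x} \to \|\eta_x\|_{E_x}$ for $\mu$-almost all $x \in M$. The bridge between the two measure spaces is the identity
\[ \int_M h(t,x_0,x)\,\|\zeta_x\|_{E_x}^2\,\mathrm d x = \int_{\mathcal C_t} \|\zeta_{c(t)}\|_{E_{c(t)}}^2\,\mathrm d w_t(c) \]
recorded in the definition of the inner product on $\Gamma_t$, which says exactly that the pushforward of $w_t$ under the endpoint evaluation $\operatorname{ev}(t,\cdot) : \mathcal C_t \to M$ equals $\mu$. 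Consequently every $\mu$-null subset of $M$ pulls back to a $w_t$-null subset of $\mathcal C_t$, so the pointwise convergence of $\|\eta_{n_k,x}\|$ transfers to $\|\eta_{n_k,c(t)}\|_{E_{c(t)}} \to \|\eta_{c(t)}\|_{E_{c(t)}}$ for $w_t$-almost all $c$.

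Finally, applying theorem \ref{pointwise norm} to each $\eta_{n_k} \in \Gamma_{cb}(E)$ yields $\| \rho_{t,\omega,\eta_{n_k}}(c) \|_{\mathcal E^*_c} = \frac 1 {\sqrt r}\,\|\omega\|_{E_{x_0}^*}\,\|\eta_{n_k,c(t)}\|_{E_{c(t)}}$ for almost all $c$; letting $k \to \infty$ and combining the two convergences obtained above gives the asserted equality for almost all $c \in \mathcal C_t$. The hard part is purely measure-theoretic, namely arranging both almost-everywhere convergences along a single common subsequence and correctly routing the null sets between $(\mathcal C_t, w_t)$ and $(M, \mu)$ through the pushforward relation; once this bookkeeping is in place the limiting step is immediate.
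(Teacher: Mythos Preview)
Your proof is correct and follows essentially the same approach as the paper's: approximate $\eta$ by a sequence in $\Gamma_{cb}(E)$, pass to a subsequence along which both $\rho_{t,\omega,\eta_{n_k}}(c)$ and $\eta_{n_k}(c(t))$ converge almost everywhere, and take the limit in the identity known from theorem \ref{pointwise norm}. One small imprecision: convergence $\eta_n \to \eta$ in $\Gamma_t$ is $L^2$-convergence of the \emph{sections} (i.e.\ of $\|\eta_n - \eta\|$) with respect to $\mu$, not literally of the scalar functions $\|\eta_n\|$; the latter does follow from the former via the reverse triangle inequality, so your extraction step is unaffected, and in fact your treatment of the pushforward relation between $(\mathcal C_t, w_t)$ and $(M,\mu)$ is more explicit than the paper's own argument.
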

\begin{proof}
If $\eta \in \Gamma_t$, let $(\eta_k)_{k \in \mathbb N} \subset \Gamma_{cb} (E)$ be a sequence that converges to $\eta$ in $\Gamma_t$; it follows that $\rho_{t, \omega, \eta_k} \to \rho_{t, \omega, \eta}$ in $\Gamma^2 (\mathcal E^*)$, so there exists a subsequence $(k_l) _{l \in \mathbb N} \subseteq \mathbb N$ such that $\eta_{k_l} \to \eta$ almost everywhere on $M$ and $\rho_{t, \omega, \eta_{k_l}} \to \rho_{t, \omega, \eta}$ almost everywhere on $\mathcal C_t$, whence
\[ \| \rho _{t, \omega, \eta} (c) \| _{\mathcal E ^* _c} = \frac 1 {\sqrt r} \, \| \omega \| _{E_{x_0}} \, \| \eta _{c(t)} \| _{E_{c(t)}} \]
foe almost all $c \in \mathcal C_t$ (again, we have tacitly worked with some arbitrary measurable representative of $\eta$; if we choose another one, this will coincide with the former on a co-null subset of $M$, which does not change the conclusion of the lemma).
\end{proof}

Let $p_t : \mathcal C_t \to M$ be the projection given by $p_t (c) = c(t)$. For every $v \in E_{x_0}$ we shall denote by $v^\flat \in E_{x_0} ^*$ the linear form given by $v^\flat = \sqrt r \, \langle \cdot, v \rangle _{E_{x_0}} \in E_{x_0} ^*$. The notation $p_t ^* E$ will denote the fiber bundle above $\mathcal C_t$ obtained as the pull-back of $E \to M$ under $p_t$. Its fiber $(p_t ^* E) _c$ over the curve $c \in \mathcal C_t$ will be, by definition, $E_{c(t)}$, and we shall use the latter notation for its simplicity. In the following we shall construct, for every $p \in (1, \infty]$, a continuous conjugate-linear map $\Gamma^p (\mathcal E) \ni \xi \mapsto \mathcal P_{t,v} ^p (\xi) \in \Gamma^p (p_t ^* E)$ such that
\[ [\rho_{t, v^\flat, \eta} (c)] \, [\xi(c)] = \langle \mathcal P_{t,v} ^p (\xi) (c), \, \eta(c(t)) \rangle _{E_{c(t)}} \]
for every $\eta \in \Gamma^\infty (E)$.

Let $M = \bigcup _{i \in \mathbb N} V_i '$ be a cover of $M$ with open trivialization domains for $E$. Let $V_0 = V_0 '$ and $V_i = V_i ' \setminus (V_0 \cup \dots \cup V_{i-1})$ for $i \ge 1$; these subsets will be measurable, pairwise disjoint, trivialization domains. Let $\{ \eta _i ^1, \dots, \eta _i ^r \}$ be a measurable orthonormal frame in $E | _{V_i}$. Defining $\eta ^l$ by $\eta ^l | _{V_i} = \eta _i ^l$ for all $1 \le l \le r$ and $i \in \mathbb N$, we obtain a global measurable orthonormal frame $\{ \eta ^1, \dots, \eta ^r \}$ in $E$ made of sections from $\Gamma^\infty (E) \subseteq \Gamma_t$, that is of sections $\eta ^l$ for which $\rho_{t, v^\flat, \eta ^l}$ is a well-defined object as we have seen above. Let $\{ \eta _1, \dots, \eta _r \}$ be the dual frame in $E^*$ defined by $\eta _k (\eta ^l) = \delta _k ^l$ (Kronecker's symbol).

If $\sigma \in \Gamma^{\frac p {p-1}} (p_t ^* E^*)$, then
\[ \sigma(c) = \sum _{l=1} ^r \{\sigma(c)\ \, [\eta ^l (c(t))]\}  \, \eta _l (c(t)) \in E^* _{c(t)} \]
for every $c \in \mathcal C_t$. We define then the functional $\mathcal F_{t,v,\xi} ^p : \Gamma^{\frac p {p-1}} (p_t ^* E^*) \to \mathbb C$ by
\[ \mathcal F _{t,v,\xi} ^p (\sigma) = \sum _{l=1} ^r \int _{\mathcal C_t} \{[\sigma(c)] \, [\eta ^l (c(t))]\} \, \overline{ \{[\rho_{t, v^\flat, \eta ^l} (c)] \, [\xi (c)]\} } \, \mathrm d w_t (c) \]
and it is obvious that it is linear. We have seen above that
\[ \| \rho_{t, v^\flat, \eta ^l} (c) \| _{\mathcal E ^* _c} = \frac 1 {\sqrt r} \, \| v^\flat \| _{E_{x_0}^*} \, \| \eta ^l _{c(t)} \| _{E _{c(t)}} = \| v \| _{E_{x_0}} \ , \]
whence we obtain that
\begin{align*}
|\mathcal F _{t,v,\xi} ^p (\sigma)| & \le \sum _{l=1} ^r \int _{\mathcal C_t} \| \sigma(c) \| _{E _{c(t)} ^*} \, \| \eta_l (c(t)) \| _{E_{c(t)} ^*} \, \| \xi(c) \| _{\mathcal E ^* _c} \, \| \rho_{t, v^\flat, \eta ^l} (c) \| _{\mathcal E^*_c} \, \mathrm d w_t (c) \le \\
& \le r \, \| v \| _{E_{x_0}} \int _{\mathcal C_t} \| \sigma(c) \| _{E _{c(t)} ^*} \, \| \xi(c) \| _{\mathcal E ^* _c} \,  \mathrm d w_t (c) \le \\
& \le r \, \| v \| _{E_{x_0}} \, \| \xi \| _{\Gamma^p (\mathcal E)} \, \| \sigma \| _{\Gamma^{\frac p {p-1}} (p_t ^* E^*)} \ .
\end{align*}
We conclude that there exists a unique section $\mathcal P_{t,v} ^p (\xi) \in \Gamma^p (p_t ^* E)$ such that
\[ \mathcal F _{t,v,\xi} ^p (\sigma) = \int _{\mathcal C_t} [\sigma (c)] \, [\mathcal P _{t,v} ^p (\xi) (c)] \, \mathrm d w_t (c) \]
for all $\sigma \in \Gamma^{\frac p {p-1}} (p_t ^* E^*)$, and that
\[ \| \mathcal P_{t,v} ^p (\xi) \| _{\Gamma^p (p_t ^* E)} \le r \, \| v \| _{E_{x_0}} \, \| \xi \| _{\Gamma^p (\mathcal E)} \ . \]
The continuity and conjugate-linearity of $\xi \mapsto \mathcal P_{t,v} ^p (\xi)$ are obvious.

\begin{corollary}
With the notations above, $\langle \eta_{c(t)}, \, \mathcal P _{t,v} ^p (\xi) (c) \rangle _{E_{c(t)}} = [\rho_{t, v^\flat, \eta} (c)] \, [\xi (c)]$ for all $\eta \in \Gamma^\infty (E)$ and almost all $c \in \mathcal C_t$.
\end{corollary}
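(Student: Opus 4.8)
The plan is to convert the integral identity defining $\mathcal P_{t,v}^p(\xi)$ into an explicit pointwise formula, and then to reduce the asserted equality to the fact that the sections $\rho_{t,\omega,\eta}$ depend on $\eta$ only through the terminal value $\eta(c(t))$, and linearly so. Throughout, $\{\eta^1,\dots,\eta^r\}$ denotes the global measurable orthonormal frame in $E$ fixed above; for $\eta\in\Gamma^\infty(E)$ I write $a_l(x)=\langle\eta_x,\eta^l(x)\rangle_{E_x}$, so that $\eta=\sum_{l=1}^r a_l\,\eta^l$ with each $a_l$ bounded and measurable (since $\|\eta^l(x)\|_{E_x}=1$ and $\eta$ is essentially bounded).

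First I would extract the pointwise form of $\mathcal P_{t,v}^p(\xi)$. Because $[\sigma(c)][\eta^l(c(t))]$ is linear in $\sigma(c)$ while $[\rho_{t,v^\flat,\eta^l}(c)][\xi(c)]$ is a scalar, the defining formula for $\mathcal F_{t,v,\xi}^p$ rewrites as
\[
\mathcal F_{t,v,\xi}^p(\sigma)=\int_{\mathcal C_t}[\sigma(c)]\Bigl[\textstyle\sum_{l=1}^r\overline{[\rho_{t,v^\flat,\eta^l}(c)][\xi(c)]}\,\eta^l(c(t))\Bigr]\,\mathrm d w_t(c).
\]
Comparing this with the representation $\mathcal F_{t,v,\xi}^p(\sigma)=\int_{\mathcal C_t}[\sigma(c)][\mathcal P_{t,v}^p(\xi)(c)]\,\mathrm d w_t(c)$, valid for every $\sigma\in\Gamma^{\frac p{p-1}}(p_t^*E^*)$, and testing in particular against sections of the form $\sigma(c)=g(c)\,\eta_m(c(t))$ with $g\in L^\infty(\mathcal C_t)$ and $\eta_m$ the dual frame, I would deduce coordinatewise (equivalently, by the duality $\Gamma^p(p_t^*E)=\Gamma^{\frac p{p-1}}(p_t^*E^*)^*$) that
\[
\mathcal P_{t,v}^p(\xi)(c)=\sum_{l=1}^r\overline{[\rho_{t,v^\flat,\eta^l}(c)][\xi(c)]}\,\eta^l(c(t))
\]
for almost all $c\in\mathcal C_t$.

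The second and harder ingredient is a locality statement: for almost all $c$ one has $\rho_{t,v^\flat,\eta}(c)=\sum_{l=1}^r a_l(c(t))\,\rho_{t,v^\flat,\eta^l}(c)$. To obtain it I would return to the approximants. The cylindrical section $P_{t,\omega,\eta,k}(c)=\omega\otimes P(c(0),c(\frac t{2^k}))\otimes\dots\otimes\eta(c(t))$ depends on $\eta$ only through the vector $\eta(c(t))\in E_{c(t)}$ and is linear in it; since $\eta(c(t))=\sum_l a_l(c(t))\,\eta^l(c(t))$, this gives $P_{t,v^\flat,\eta,k}(c)=\sum_l a_l(c(t))\,P_{t,v^\flat,\eta^l,k}(c)$ for every $c$ and every $k$. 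By theorem \ref{approximation of rho}, the $r+1$ sequences $(P_{t,v^\flat,\eta,k})_k$ and $(P_{t,v^\flat,\eta^l,k})_k$ converge in $\Gamma^2(\mathcal E^*)$ to $\rho_{t,v^\flat,\eta}$ and $\rho_{t,v^\flat,\eta^l}$; passing to a single subsequence along which all of them converge $w_t$-almost everywhere (a finite intersection of co-null sets), the displayed finite linear relation passes to the limit and yields the locality statement. This is the step I expect to be the main obstacle, precisely because the coefficients $a_l(c(t))$ vary with $c$, so the relation cannot be read off from the global linearity of $\eta\mapsto\rho_{t,v^\flat,\eta}$ on $\Gamma_t$ alone and must be obtained at the level of the explicit approximating sections.

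Finally I would combine the two formulas. Using that the Hermitian product is conjugate-linear in its second argument, the formula for $\mathcal P_{t,v}^p(\xi)(c)$ gives
\[
\langle\eta_{c(t)},\mathcal P_{t,v}^p(\xi)(c)\rangle_{E_{c(t)}}=\sum_{l=1}^r[\rho_{t,v^\flat,\eta^l}(c)][\xi(c)]\,\langle\eta_{c(t)},\eta^l(c(t))\rangle_{E_{c(t)}}=\sum_{l=1}^r a_l(c(t))\,[\rho_{t,v^\flat,\eta^l}(c)][\xi(c)],
\]
while the locality statement gives $[\rho_{t,v^\flat,\eta}(c)][\xi(c)]=\sum_{l=1}^r a_l(c(t))\,[\rho_{t,v^\flat,\eta^l}(c)][\xi(c)]$. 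The two right-hand sides coincide, which is exactly the claimed identity for almost all $c\in\mathcal C_t$.
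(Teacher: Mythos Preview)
Your argument is correct and follows essentially the same route as the paper: both reduce the claim to the Riesz representation of $\mathcal F_{t,v,\xi}^p$ together with the identity $\rho_{t,v^\flat,\eta}(c)=\sum_l a_l(c(t))\,\rho_{t,v^\flat,\eta^l}(c)$, and both conclude by testing against sufficiently many sections in $\Gamma^{p/(p-1)}(p_t^*E^*)$. The only differences are organizational: the paper tests directly against $\sigma=f\,p_t^*\eta^\flat$ with $f\in L^{p/(p-1)}(\mathcal C_t)$ arbitrary, rather than first extracting your explicit pointwise formula for $\mathcal P_{t,v}^p(\xi)(c)$; and on the ``locality'' step the paper simply invokes the linearity of $\eta\mapsto\rho_{t,v^\flat,\eta}$, whereas you rightly point out that the $c$-dependent coefficients $a_l(c(t))$ require more than $\mathbb C$-linearity and supply the missing justification via the approximants $P_{t,v^\flat,\eta,k}$ and an almost-everywhere convergent subsequence.
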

\begin{proof}
Let $\eta \in \Gamma^\infty (E)$ and $f \in L^{\frac p {p-1}} (\mathcal C_t)$. Denote by $\eta^\flat \in \Gamma^\infty (E^*)$ the dual section, defined pointwise by $\eta^\flat _x (u) = \langle u, \eta_x \rangle _{E_x}$ for almost all $x \in M$ and all $u \in E_x$. With this notation, $f \eta^\flat \in \Gamma^{\frac p {p-1}} (E^*)$. Using the definitions of $\mathcal P_{t,v} ^p$ and of $\mathcal F _{t,v} ^p$ given above,
\begin{gather*}
\int _{\mathcal C_t} f(c) \, \langle \mathcal P _{t,v} ^p (\xi) (c), \, \eta_{c(t)} \rangle _{E_{c(t)}} \, \mathrm d w_t (c) = \int _{\mathcal C_t} [(f \, p_t ^* \eta^\flat) (c)] \, [\mathcal P _{t,v} ^p (\xi) (c)] \, \mathrm d w_t (c) = \mathcal F _{t,v} ^p (f \, p_t ^* \eta^\flat) = \\
= \sum _{l=1} ^r \int _{\mathcal C_t} \{ [(f \, p_t ^* \eta^\flat) (c)] \, [\eta ^l _{c(t)}] \} \, \overline{ \{[\rho_{t, v^\flat, \eta ^l} (c)] \, [\xi (c)]\} } \, \mathrm d w_t (c) = \\
= \int _{\mathcal C_t} f(c) \sum _{l=1} ^r \langle \eta ^l _{c(t)}, \eta_{c(t)} \rangle _{E_{c(t)}} \overline{ \{[\rho_{t, v^\flat, \eta ^l} (c)] \, [\xi (c)]\} } \, \mathrm d w_t (c) = \\
= \int _{\mathcal C_t} f(c) \, \overline{ \{[\rho_{t, v^\flat, \eta} (c)] \, [\xi (c)]\} } \, \mathrm d w_t (c) \ ,
\end{gather*}
where for the last equality we have used the linearity of the map $\eta \mapsto \rho_{t, v^\flat, \eta}$ and the fact that
\[ \eta_{c(t)} = \sum _{l=1} ^r \langle \eta_{c(t)}, \eta ^l (c(t)) \rangle _{E_{c(t)}} \, \eta ^l (c) \]
for all $c \in \mathcal C_t$.

Since $f$ is arbitrary, we conclude that
\[ \langle \eta_{c(t)}, \, \mathcal P _{t,v} ^p (\xi) (c) \rangle _{E_{c(t)}} = \{[\rho_{t, v^\flat, \eta} (c)] \, [\xi (c)]\} \]
for almost all $c \in \mathcal C_t$.
\end{proof}

The linearity of the map $\mathcal P_{t,v} ^p (\xi)$ with respect to $v \in E_{x_0}$ allows us to define a section $\mathcal P _t ^p (\xi) \in \Gamma^p (p_t ^* E) \otimes E_{x_0} ^*$ by requiring that $\mathcal P _t ^p (\xi) (c) (v) = \mathcal P _{t,v} ^p (\xi) (c)$ for all $v \in E_{x_0}$ and almost all $c \in \mathcal C_t$. Furthermore,
\[ \| \mathcal P_t ^p (\xi) (c) \| _{E_{c(t)} \otimes E_{x_0} ^*} = \sup _{\| v \| _{E_{x_0}} = 1} \| \mathcal P_{t,v} ^p (\xi) (c) \| _{E_{c(t)}} \le r \, \| v \| _{E_{x_0}} \, \| \xi \| _{\Gamma^p (\mathcal E)} = r \, \| \xi \| _{\Gamma^p (\mathcal E)} \ . \]

The map $\mathcal P_t ^p$ encloses a great deal of information regarding the differential geometry and the stochastic calculus associated to the bundle $E$. In the rest of this article we shall see just two of its uses, hopefully enough to convince the reader of its usefulness: the stochastic parallel transport and the Feynman-Kac formula.

\section{The stochastic parallel transport}

Let us begin by defining the sections $\mathcal P_{t,v,k}$ by the explicit formula
\[ \mathcal P_{t,v,k} (c) = P \left( c(t), c \left( \frac {(2^k-1) t} {2^k} \right) \right) \dots P \left( c \left( \frac t {2^k} \right), c(0) \right) v \]
where $v \in E_{x_0}$ is arbitrary, $c \in \mathcal C_t$ and $k \in \mathbb N$. Notice that $\mathcal P_{t,v,k} (c)$ belongs to the fiber $E_{c(t)} = (p_t ^* E) _c$. Since $P$ has been shown to be a measurable map, $\mathcal P_{t,v,k}$ will be a measurable section in $p_t ^* E$. Furthremore, since $\| \mathcal P_{t,v,k} \| _{E_{c(t)}} \le \| v \| _{E_{x_0}}$, we deduce that $\mathcal P_{t,v,k} \in \Gamma^\infty (p_t ^* E) \subseteq \Gamma^2 (p_t ^* E)$.

Let us define the section $\operatorname{Id} : \mathcal C_t \to \mathcal E$ by $\operatorname{Id} (c) = \otimes _{s \in D_t} \operatorname{Id} _{E_{c(s)}} \in \mathcal E _c$; more precisely, $\operatorname{Id} (c)$ is the equivalence class (in the sense of the construction of the algebraic inductive limit as a space of equivalence classes), for instance, of the element $\operatorname{Id} _{E_{x_0}}$, and the map $\mathcal C_t \ni c \mapsto \operatorname{Id} _{E_{x_0}} \in \mathcal E _c$ is obviously continuous. Furthermore, it is obvious that $\| \operatorname{Id} (c) \| _{\mathcal E _c} = \| \operatorname{Id} (E_{x_0}) \| _{\operatorname{End} E_{x_0}} = 1$, so $\operatorname{Id} \in \Gamma^\infty (p_t ^* E) \subseteq \Gamma^2 (p_t ^* E)$. We notice then that
\[ [P_{t, v^\flat, \eta, k} (c)] \, [\operatorname{Id} (c)] = [P_{t, v^\flat, \eta, k} (c)] \, [ \operatorname{Id}_{E_{c(0)}} \otimes \dots \otimes \operatorname{Id}_{E_{c(t)}}] = \langle \eta(c(t)), \, \mathcal P_{t,v,k} (c) \rangle _{E_{c(t)}} \ . \]

\begin{theorem}
$\mathcal P_{t, v, k} \to \mathcal P_{t, v} ^2 (\operatorname{Id})$ in $\Gamma^2 (p_t ^* E)$ for all $v \in E_{x_0}$, uniformly with respect to $t \in (0, T]$ for all $T>0$.
\end{theorem}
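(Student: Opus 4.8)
The plan is to pin down the difference $\mathcal P_{t,v,k} - \mathcal P_{t,v}^2(\operatorname{Id})$ by pairing it fiberwise with the global measurable orthonormal frame $\{\eta^1,\dots,\eta^r\} \subset \Gamma^\infty(E)$ constructed in the previous section, thereby reducing the assertion to the already-established convergence $P_{t,v^\flat,\eta,k} \to \rho_{t,v^\flat,\eta}$ of theorem \ref{approximation of rho}. The key observation is that pairing against a frame section $\eta^l$ converts both sections of $p_t^*E$ into contractions of the cylindrical section $\operatorname{Id}$ with, respectively, $P_{t,v^\flat,\eta^l,k}$ and $\rho_{t,v^\flat,\eta^l}$ in the bundle $\mathcal E^*$, where convergence is available.

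Concretely, I would first record the two identities at our disposal. The identity displayed immediately before the statement gives $\langle \eta^l(c(t)), \mathcal P_{t,v,k}(c)\rangle_{E_{c(t)}} = [P_{t,v^\flat,\eta^l,k}(c)]\,[\operatorname{Id}(c)]$, while the preceding corollary, specialized to $\xi = \operatorname{Id}$ and $p=2$, gives $\langle \eta^l(c(t)), \mathcal P_{t,v}^2(\operatorname{Id})(c)\rangle_{E_{c(t)}} = [\rho_{t,v^\flat,\eta^l}(c)]\,[\operatorname{Id}(c)]$ for almost every $c$. Subtracting and using linearity of the fiber inner product in its second argument yields
\[ \langle \eta^l(c(t)),\, \mathcal P_{t,v,k}(c) - \mathcal P_{t,v}^2(\operatorname{Id})(c)\rangle_{E_{c(t)}} = [(P_{t,v^\flat,\eta^l,k} - \rho_{t,v^\flat,\eta^l})(c)]\,[\operatorname{Id}(c)]. \]
Since $\{\eta^l(c(t))\}_{l=1}^r$ is an orthonormal basis of $E_{c(t)}$, Parseval's identity expresses the fiber norm of the difference as the sum over $l$ of the squared moduli of the left-hand sides; bounding each right-hand side by the Cauchy-Schwarz inequality in the fiber $\mathcal E_c^*$ and using $\|\operatorname{Id}(c)\|_{\mathcal E_c} = 1$ gives the pointwise estimate
\[ \|\mathcal P_{t,v,k}(c) - \mathcal P_{t,v}^2(\operatorname{Id})(c)\|_{E_{c(t)}}^2 \le \sum_{l=1}^r \|(P_{t,v^\flat,\eta^l,k} - \rho_{t,v^\flat,\eta^l})(c)\|_{\mathcal E_c^*}^2. \]
Integrating this inequality over $\mathcal C_t$ against $w_t$ collapses it into
\[ \|\mathcal P_{t,v,k} - \mathcal P_{t,v}^2(\operatorname{Id})\|_{\Gamma^2(p_t^*E)}^2 \le \sum_{l=1}^r \|P_{t,v^\flat,\eta^l,k} - \rho_{t,v^\flat,\eta^l}\|_{\Gamma^2(\mathcal E^*)}^2. \]
By theorem \ref{approximation of rho} each of the finitely many summands tends to $0$ as $k\to\infty$, uniformly with respect to $t\in(0,T]$; as the sum has only $r$ terms, the right-hand side converges uniformly as well, which is precisely the claim. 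Throughout I would take $\omega = v^\flat$ and note $\|v^\flat\|_{E_{x_0}^*} = \sqrt r\,\|v\|_{E_{x_0}}$, so that all constants depend only on $v$ and $r$.

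The main obstacle is justifying the invocation of theorem \ref{approximation of rho} for the frame sections $\eta^l$: these lie in $\Gamma^\infty(E)\subseteq\Gamma_t$ rather than in $\Gamma_{cb}(E)$, and a global continuous orthonormal frame need not exist, so the merely measurable frame cannot simply be replaced by a continuous one. The identity preceding the statement is a pointwise algebraic fact that survives for any bounded measurable $\eta^l$, and $\rho_{t,v^\flat,\eta^l}$ is already defined for $\eta^l\in\Gamma_t$; what must be checked is that the \emph{uniform-in-$t$} convergence of theorem \ref{approximation of rho} persists under the passage from $\Gamma_{cb}(E)$ to $\Gamma_t$. The natural route is a density argument exploiting that $\Gamma_{cb}(E)$ is dense in $\Gamma_t$, together with the uniform operator bounds $\|P_{t,v^\flat,\eta,k}\|_{\Gamma^2(\mathcal E^*)}, \|\rho_{t,v^\flat,\eta}\|_{\Gamma^2(\mathcal E^*)} \le \frac{1}{\sqrt r}\|v^\flat\|_{E_{x_0}^*}\|\eta\|_{\Gamma_t}$ that control both maps uniformly in $k$; the delicate point is to make $\|\eta^l - \eta^l_m\|_{\Gamma_t}$ small uniformly in $t\in(0,T]$, for which the sub-Markovianity of the scalar heat semigroup governs the approximation error. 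Once this extension is secured, the finite-sum reduction above yields the uniform convergence with no further work.
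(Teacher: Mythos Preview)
Your argument is essentially identical to the paper's own proof: expand the fiber norm via the measurable orthonormal frame $\{\eta^1,\dots,\eta^r\}$, use the two identities to rewrite each component as a pairing of $\operatorname{Id}$ with $P_{t,v^\flat,\eta^l,k}-\rho_{t,v^\flat,\eta^l}$, bound by Cauchy--Schwarz using $\|\operatorname{Id}(c)\|_{\mathcal E_c}=1$, integrate, and invoke theorem~\ref{approximation of rho}. The paper carries this out in a single chain of inequalities without further comment.

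The technical concern you raise in your last paragraph---that theorem~\ref{approximation of rho} is proved for $\eta\in\Gamma_{cb}(E)$ while the frame sections $\eta^l$ are only in $\Gamma^\infty(E)$---is legitimate, and in fact the paper glosses over exactly this point: it simply applies theorem~\ref{approximation of rho} to the measurable frame without justification. Your proposed density-plus-uniform-bounds resolution is the right idea; the uniformity in $t\in(0,T]$ of the approximation $\|\eta^l-\eta^l_m\|_{\Gamma_t}\to 0$ does require some care, since $\|\cdot\|_{\Gamma_t}$ depends on $t$, but for bounded $\eta^l$ one can control $(\mathrm e^{-tH}\|\eta^l-\eta^l_m\|^2)(x_0)$ uniformly on $(0,T]$ by choosing approximants that agree with $\eta^l$ outside a set of small heat-kernel mass. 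So your proof is at least as complete as the paper's, and you are right to flag the issue.
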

\begin{proof}
Using again the global measurbale orthonormal frame $\{ \eta^1, \dots, \eta^r \}$ in $E$,
\begin{align*}
\sup _{t \in (0,T]} \| \mathcal P_{t,v} ^2 (\operatorname{Id}) & - \mathcal P_{t,v,k} \| _{\Gamma^2 (p_t ^* E)} ^2 = \\
& = \sup _{t \in (0,T]} \int _{\mathcal C_t} \| \mathcal P_{t,v} ^2 (\operatorname{Id}) (c) - \mathcal P_{t,v,k} (c) \| _{E _{c(t)}} ^2 \, \mathrm d w_t (c) = \\
& = \sup _{t \in (0,T]} \int _{\mathcal C_t} \sum _{l=1} ^r | \langle \eta ^l (c(t)), \, \mathcal P_{t,v} ^2 (\operatorname{Id}) (c) - \mathcal P_{t,v,k} (c) \rangle _{E_{c(t)}} | ^2 \, \mathrm d w_t (c) = \\
& = \sup _{t \in (0,T]} \sum _{l=1} ^r \int _{\mathcal C_t} | [\rho_{t, v^\flat, \eta ^l} (c) - P_{t, v^\flat, \eta ^l, k} (c)] \, [\operatorname{Id} (c)] | ^2 \, \mathrm d w_t (c) \le \\
& \le \sum _{l=1} ^r \sup _{t \in (0,T]} \int _{\mathcal C_t} \| \rho_{t, v^\flat, \eta ^l} (c) - P_{t, v^\flat, \eta ^l, k} (c) \| _{\mathcal E^* _c} ^2 \, \mathrm d w_t (c) \le \\
& \le \sum _{l=1} ^r \sup _{t \in (0,T]} \| \rho_{t, v^\flat, \eta ^l} - P_{t, v^\flat, \eta ^l, k} \| _{\Gamma^2 (\mathcal E^*)} ^2 \to 0 \ ,
\end{align*}
which together with theorem \ref{approximation of rho} shows the desired convergence.
\end{proof}

Comparing this result with the one obtained by probabilistic techniques (\cite{Ito63}, \cite{Ito75a}, \cite{Ito75b}), we conclude that $\mathcal P_{t, v} ^2 (\operatorname{Id})$ is the stochastic parallel transport in $E$ of the vector $v \in E_{x_0}$. In particular, $\mathcal P _{t,v} ^2 (\operatorname{Id})$ does not depend on the choices made in its construction (the domains of trivialization, the orthonormal frames above them etc.), being the limit of a sequence of sections that do not depend on these choices.

\begin{corollary}
$\| \mathcal P _{t,v} ^2 (\operatorname{Id}) (c) \| _{E_{c(t)}} = \| v \| _{E_{x_0}}$ for almost every curve $c \in \mathcal C_t$.
\end{corollary}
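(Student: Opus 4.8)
The plan is to follow closely the argument used in the proof of corollary \ref{pointwise norm above regular domains}, the only differences being that we now work globally on $\mathcal C_t$ rather than on some $\mathcal C_t(\overline{U_i})$, and that we deal with $\mathcal P_{t,v,k}$ instead of $P_{t,\omega,\eta,k}$. The key observation is that, away from the places where the cut-off truncates it to zero, each factor $P(x,y)$ in the definition of $\mathcal P_{t,v,k}(c)$ is a genuine parallel transport, hence an isometry between fibers (the connection $\nabla$ being Hermitian); consequently $\mathcal P_{t,v,k}(c)$ will be a composition of isometries, hence itself an isometry from $E_{x_0}$ to $E_{c(t)}$, for every curve $c$ and every $k$ large enough (depending on $c$).

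First I would invoke the theorem immediately preceding this corollary, which asserts that $\mathcal P_{t,v,k} \to \mathcal P_{t,v}^2(\operatorname{Id})$ in $\Gamma^2(p_t^* E)$. After fixing a measurable representative of $\mathcal P_{t,v}^2(\operatorname{Id})$, the standard extraction argument (identical to the one used in the proof of the completeness of $\Gamma^2(\mathcal E)$) produces a subsequence $(k_l)_{l \in \mathbb N} \subseteq \mathbb N$ and a co-null subset $C \subseteq \mathcal C_t$ such that $\mathcal P_{t,v,k_l}(c) \to \mathcal P_{t,v}^2(\operatorname{Id})(c)$ in $E_{c(t)}$ for every $c \in C$.

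Next, for each fixed $c \in C$ I would reuse verbatim the mechanism from lemma \ref{approximation of continuous curves}: since $c$ is uniformly continuous on the compact interval $[0,t]$, and since $c([0,t])$ is a compact subset of $M$ along which the injectivity radius is bounded below by a positive constant, there exists $k_c \in \mathbb N$ such that for every $k \ge k_c$ and every $0 \le j \le 2^k - 1$ the consecutive dyadic points $c(\frac{jt}{2^k})$ and $c(\frac{(j+1)t}{2^k})$ are joined by a unique minimizing geodesic. For such $k$, every factor $P(c(\frac{(j+1)t}{2^k}), c(\frac{jt}{2^k}))$ is precisely the parallel transport between the two points, which is a fiberwise isometry; hence the composition defining $\mathcal P_{t,v,k}(c)$ is an isometry $E_{x_0} \to E_{c(t)}$, so that $\| \mathcal P_{t,v,k}(c) \|_{E_{c(t)}} = \| v \|_{E_{x_0}}$ for all $k \ge k_c$.

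Finally, passing to the limit along the subsequence $(k_l)$ (which eventually exceeds $k_c$) yields $\| \mathcal P_{t,v}^2(\operatorname{Id})(c) \|_{E_{c(t)}} = \lim_{l \to \infty} \| \mathcal P_{t,v,k_l}(c) \|_{E_{c(t)}} = \| v \|_{E_{x_0}}$ for every $c \in C$, and since $C$ is co-null this is exactly the claimed almost-everywhere identity. I do not anticipate any genuine obstacle here: the only point deserving care is the justification that the cut-off parallel transport $P$ coincides with the true (isometric) parallel transport for all sufficiently large $k$, and this is already secured by the uniform-continuity argument of lemma \ref{approximation of continuous curves}, exactly as in corollary \ref{pointwise norm above regular domains}.
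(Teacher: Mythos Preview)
Your proposal is correct and follows essentially the same approach as the paper's own proof: extract an almost-everywhere convergent subsequence from the $\Gamma^2$-convergence, then for each fixed curve $c$ use the uniform-continuity argument of lemma \ref{approximation of continuous curves} to guarantee that for all sufficiently large $k$ every factor $P(c(\frac{(j+1)t}{2^k}),c(\frac{jt}{2^k}))$ is a genuine (hence isometric) parallel transport, and pass to the limit. The paper's proof is virtually identical, differing only in that it phrases the key step as ``$\mathcal P_{t,v,k}(c)$ is the parallel transport of $v$ along a zig-zag line made of $2^k-1$ minimizing geodesic segments'' rather than spelling out the isometry of each factor.
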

\begin{proof}
Since $\mathcal P_{t, v, k} \to \mathcal P_{t,v} ^2 (\operatorname{Id})$ in $\Gamma^2 (p_t ^* E)$, there exists a subsequence $(k_j) _{j \in \mathbb N} \subseteq \mathbb N$ such that $\mathcal P_{t, v, k_j} (c) \to \mathcal P_{t, v} ^2 (\operatorname{Id}) (c)$ in $E_{c(t)}$ for almost all $c \in \mathcal C_t$. Let $c$ be such a curve; using again the argument in lemma \ref{pointwise norm}, there exists a $l_c \in \mathbb N$ such that $\mathcal P_{t, v, k} (c)$ is the parallel transport of $v$ along a zig-zag line made of $2^k-1$ minimizing geodesic segments, for all $k \ge l_c$, so
\[ \| \mathcal P_{t, v} ^2 (\operatorname{Id}) (c) \| _{E_{c(t)}} = \lim _{j \to \infty } \| \mathcal P_{t, v, k_j} (c) \| _{E_{c(t)}} = \| v \| _{E_{x_0}} \ . \]
\end{proof}

Since $\| \mathcal P _t ^2 (\operatorname{Id}) (v) (c) \| _{E_{c(t)}} = \| v \| _{E_{x_0}}$ for almost every curve $c \in \mathcal C_t$, it makes sense to talk about $\mathcal P _t ^2 (\operatorname{Id}) ^{-1}$. One sees immediately that this object will be a section in $p_t ^* E$ with values in $E_{x_0}$, or formally $\mathcal P _t ^2 (\operatorname{Id}) ^{-1} \in \Gamma^2 (p_t ^* E) \otimes E_{x_0}$.

\section{The Feynman-Kac formula in vector bundles}

In the following we shall state and prove an extension in Hermitian bundles of the Feynman-Kac formula. Consider a "potential" $V \in \Gamma^1 _{loc} (\operatorname{End} E)$ with the property $\operatorname{ess \, inf} _{x \in M} \min \operatorname{spec} V(x) = \beta > -\infty$ (for short: $V \ge \beta$), and with $V(x)$ self-adjoint for almost all $x \in M$. The quadratic form $\Gamma_0 (E) \ni\eta \mapsto \int _M \langle V(x) \eta_x, \eta_x \rangle _{E_x} \, \mathrm d x \in \mathbb R$ will give rise to a densely-defined self-adjoint operator in $\Gamma^2 (E)$, that we shall denote again by $V$, for simplicity. Indeed, the quadratic form is well-defined because
\[ \left| \int _M \langle V(x) \eta_x, \eta_x \rangle _{E_x} \, \mathrm d x \right| \le \sup _{x \in M} \| \eta_x \| ^2 \int _{\operatorname{supp} \eta} \| V(x) \| _{op} \, \mathrm d x < \infty \ . \]
It is also lower-bounded by $\beta$ because, if $\{ e_{1,x}, \dots, e_{r,x} \}$ is an orthonormal basis in $E_x$ made of eigenvectors of $V(x)$ with corresponding eigenvalues $\lambda_{1,x} \le \dots \le \lambda_{r,x} \subset [\beta, \infty)$ for every $x \in M$, and if $\eta_x = \sum _{i=1} ^r \alpha_{i,x} \, e_{i,x}$, we have that
\begin{align*}
\langle V(x) \eta_x, \eta_x \rangle _{E_x} & = \left< \sum _{i=1} ^r \alpha_{i,x} \lambda_{i,x} e_{i,x}, \sum _{j=1} ^r \alpha_{j,x} e_{j,x} \right> _{E_x} = \\
& = \sum _{i=1} ^r \lambda_{i,x} | \alpha_{i,x} | ^2 \ge \sum _{i=1} ^r \lambda_{1,x} | \alpha_{i,x} | ^2 = \lambda_{1,x} \| \eta \| ^2 _{E_x} \ge \beta \| \eta \| ^2 _{E_x} \ .
\end{align*}
One may construct the self-adjoint, densely-defined operator corresponding to the sum of $H_\nabla$ and $V$ in the same way, using quadratic forms. Of course, the same construction may be performed not only on $M$, but also on any relatively compact open subset with smooth boundary.

When the starting point of the continuous curves is no longer the fixed point $x_0 \in M$, as until now, but some variable $x \in M$, all the objects that depend on it will gain it as a supplementary lower index; this means that the space of continuous curves starting at $x$ will be $\mathcal C _{t,x}$, on which we shall have the Wiener measure $w_{t,x}$, and all the objects constructed ini this article so far will also gain a supplementary lower index $x$, meaning that we shall have the sections $\rho_{t, \omega, \eta, x}$, $\mathcal P _{t,v,x}$ and $\mathcal P_{t,x}$ etc.

For each $k \in \mathbb N$ denote by $\operatorname V_{t,x,k} \in \Gamma^\infty (\mathcal E)$ the section given by
\[ \operatorname V_{t,x,k} (c) = \mathrm e ^{- \frac t {2^k} V \left( c \left( \frac {t} {2^k} \right) \right)} \otimes \dots \otimes \mathrm e ^{- \frac t {2^k} V(c (t))} \ . \]
Since $V \ge \beta$ și $t \ge 0$, it is immediate that $\| \operatorname V_{t,x,k} (c) \| _{\mathcal E _c} \le \mathrm e^{-t \beta}$ for almost all $c \in \mathcal C_{t,x}$, whence we conclude with the Banach-Alaoglu theorem that there xists a subsequence $(k_l) _{l \in \mathbb N} \subseteq \mathbb N$ such that the subsequence $(\operatorname V_{t,x,k_l}) _{l \in \mathbb N}$ has a weak limit denoted $\operatorname V_{t,x} \in \Gamma^2 (\mathcal E)$. In particular, we conclude that the section $\mathcal P _{t,x} ^2 (\operatorname V_{t,x})$ exists in $\Gamma^2 (p_t ^* E) \otimes E_x ^*$.

\begin{theorem}[The Feynman-Kac formula]
If $\eta \in \Gamma^2(E)$ then
\[ (\mathrm e ^{-t H_\nabla - t V} \eta) (x) = \int _{\mathcal C_{t,x}} [\mathcal P_{t,x} ^2 (\operatorname V_{t, x}) (c)] ^* \, \eta_{c(t)} \, \mathrm d w_{t,x} (c) \]
for every $t>0$ and almost all $x \in M$.
\end{theorem}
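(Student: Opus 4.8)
The plan is to prove the formula by testing both sides against an arbitrary vector $v \in E_x$ and reducing everything to the functional-analytic machinery already developed, with the Chernoff-type limits replaced at the very end by the Trotter product formula. Since both sides lie in $E_x$, it suffices to show that $\langle (\mathrm e^{-tH_\nabla - tV}\eta)(x), v\rangle_{E_x}$ equals $\int_{\mathcal C_{t,x}} \langle [\mathcal P_{t,x}^2(\operatorname{V}_{t,x})(c)]^* \eta_{c(t)}, v\rangle_{E_x}\, \mathrm d w_{t,x}(c)$ for every $v$. Moving the adjoint across the inner product and using $\mathcal P_{t,x}^2(\xi)(c)(v) = \mathcal P_{t,v,x}^2(\xi)(c)$ turns the right-hand side into $\int_{\mathcal C_{t,x}} \langle \eta_{c(t)}, \mathcal P_{t,v,x}^2(\operatorname{V}_{t,x})(c)\rangle_{E_{c(t)}}\, \mathrm d w_{t,x}(c)$. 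First I would assume $\eta \in \Gamma^\infty(E)$, so that the corollary characterizing $\mathcal P_{t,v,x}^2$ applies and rewrites the integrand as $[\rho_{t,v^\flat,\eta,x}(c)][\operatorname{V}_{t,x}(c)]$, deferring the extension to general $\eta \in \Gamma^2(E)$ to the end.

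The next step exploits the abstract duality. Since $\rho_{t,v^\flat,\eta,x} \in \Gamma^2(\mathcal E^*) = \Gamma^2(\mathcal E)^*$ is a continuous linear functional on $\Gamma^2(\mathcal E)$, and since $\operatorname{V}_{t,x}$ is by construction the weak limit of the subsequence $(\operatorname{V}_{t,x,k_l})_l$, I may pass the limit through the pairing:
\[ \int_{\mathcal C_{t,x}} [\rho_{t,v^\flat,\eta,x}(c)][\operatorname{V}_{t,x}(c)]\, \mathrm d w_{t,x}(c) = \lim_{l \to \infty} \int_{\mathcal C_{t,x}} [\rho_{t,v^\flat,\eta,x}(c)][\operatorname{V}_{t,x,k_l}(c)]\, \mathrm d w_{t,x}(c) = \lim_{l \to \infty} W_{t,v^\flat,\eta,x}(\operatorname{V}_{t,x,k_l}), \]
the last equality holding because $\rho_{t,v^\flat,\eta,x}$ represents, on bounded cylindrical sections, the global heat-kernel contraction functional $W_{t,v^\flat,\eta,x}$ (as follows by letting the exhaustion index tend to infinity in the regular-domain identity), and each $\operatorname{V}_{t,x,k_l}$ is such a section.

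It then remains to evaluate $W_{t,v^\flat,\eta,x}(\operatorname{V}_{t,x,k})$ explicitly. By the tensor-contraction rule recalled in the introduction, the $2^k$ factors $\mathrm e^{-\frac{t}{2^k}V(x_j)}$ of $\operatorname{V}_{t,x,k}$ become interleaved between the consecutive heat-kernel factors $h_\nabla(\tfrac{t}{2^k}, x_{j-1}, x_j)$; carrying out the successive integrations and using $(\mathrm e^{-sH_\nabla}\psi)(x) = \int_M h_\nabla(s,x,y)\psi(y)\,\mathrm d y$ collapses the iterated integral into $\langle (\mathrm e^{-\frac{t}{2^k}H_\nabla}\mathrm e^{-\frac{t}{2^k}V})^{2^k}\eta\,(x), v\rangle_{E_x}$, the normalization constants $\tfrac1r$ being exactly absorbed by the factor $\sqrt r$ in $v^\flat = \sqrt r\,\langle\cdot, v\rangle_{E_x}$. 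This is the analogue, with $\operatorname{Id}(c)$ replaced by $\operatorname{V}_{t,x,k}(c)$, of the identity $[P_{t,v^\flat,\eta,k}(c)][\operatorname{Id}(c)] = \langle\eta(c(t)), \mathcal P_{t,v,k}(c)\rangle_{E_{c(t)}}$ already used in the construction of the stochastic parallel transport.

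The final and hardest step is the passage to the limit in $k$. The operator $(\mathrm e^{-\frac{t}{2^k}H_\nabla}\mathrm e^{-\frac{t}{2^k}V})^{2^k}$ is precisely the Lie--Trotter approximant of the semigroup generated by the form sum $H_\nabla + V$, so I would invoke the Trotter--Kato product formula to conclude that it converges strongly to $\mathrm e^{-tH_\nabla - tV}$ as $k \to \infty$, whence $\lim_l W_{t,v^\flat,\eta,x}(\operatorname{V}_{t,x,k_l}) = \langle (\mathrm e^{-tH_\nabla - tV}\eta)(x), v\rangle_{E_x}$; since this identifies the limit independently of the subsequence, it also confirms that $\mathcal P_{t,x}^2(\operatorname{V}_{t,x})$ is well defined. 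The main obstacle I anticipate is justifying this product formula under the paper's weak hypotheses --- $V$ only locally integrable, lower bounded, and fibrewise self-adjoint, with $H_\nabla + V$ a form sum rather than an operator sum --- since the classical statement requires care in the semibounded, form-sum setting; I would rely on the self-adjoint form version of the product formula, valid for the form sum of two lower-bounded self-adjoint operators, together with the strong continuity of the two semigroups already established. The remaining points --- the extension from $\eta \in \Gamma^\infty(E)$ to $\eta \in \Gamma^2(E)$, using the boundedness of $\mathcal P_{t,x}^2$ and the density of $\Gamma^\infty(E)$ in $\Gamma^2(E)$, and the arbitrariness of $v$ --- are routine.
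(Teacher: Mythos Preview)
Your strategy differs from the paper's in two structural ways, and the second of these hides a genuine gap.

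First, the paper tests both sides against a section $\eta' \in \Gamma^2(E)$ and integrates over $x \in M$, rather than testing against a single vector $v \in E_x$ at a fixed point. Second, and more importantly, the paper never works with a global functional $W_{t,v^\flat,\eta,x}$ built from the heat kernel $h_\nabla$ on $M$; it stays on the relatively compact domains $U_j$ throughout, using only $W^{(j)}$ and $\rho^{(j)}$, applies Trotter--Kato on $U_j$ to produce the approximant $\big(\mathrm e^{-\frac{t}{2^k}H_\nabla^{(j)}}\mathrm e^{-\frac{t}{2^k}V}\big)^{2^k}$, and only then passes to $j\to\infty$ via Simon's monotone-convergence theorem for forms, together with a careful splitting of the Wiener measures $w_{t,x}^{(j)}$ versus $w_{t,x}$.

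The gap in your route is the sentence ``$\rho_{t,v^\flat,\eta,x}$ represents, on bounded cylindrical sections, the global heat-kernel contraction functional $W_{t,v^\flat,\eta,x}$ (as follows by letting the exhaustion index tend to infinity in the regular-domain identity)''. No such global $W$ is ever defined in the paper, and establishing the identity
\[
\int_{\mathcal C_{t,x}} [\rho_{t,v^\flat,\eta,x}(c)]\,[\operatorname{V}_{t,x,k}(c)]\,\mathrm d w_{t,x}(c) \;=\; \big\langle \big(\mathrm e^{-\frac{t}{2^k}H_\nabla}\,\mathrm e^{-\frac{t}{2^k}V}\big)^{2^k}\eta\,(x),\, v\big\rangle_{E_x}
\]
pointwise in $x$ would require the pointwise convergence $h_\nabla^{(j)}\to h_\nabla$ (or equivalently pointwise convergence of the $j$-th Trotter iterates to the global one). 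But Simon's theorem and Trotter--Kato give only $\Gamma^2$-convergence, and the pointwise statement $h_\nabla^{(j)}\to h_\nabla$ is proved in the paper \emph{as a corollary of} the Feynman--Kac formula, so invoking it here is circular. The paper's device of pairing with $\eta'$ and integrating over $x$ is exactly what turns these pointwise questions into $\Gamma^2(E)$-questions that the available tools settle; working at a fixed $x$, as you do, loses that leverage.
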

\begin{proof}
Let us consider an exhaustion $M = \bigcup _{j \ge 0} U_j$ with relatively compact connected open subsets with smooth boundary, as we have already done in this article, the notations being the ones already encountered. For every $x \in M$ there exists a $j_x \in \mathbb N$ such that $x \in U_j$ for all $j \ge j_x$. This means that for every $x \in M$ it makes sense to consider the spaces $\mathcal C_{t,x} (\overline{U_j})$ for large enough $j$, and this is enough because in the following we shall let $j \to \infty$.

From theorem 4.2 in \cite{Simon78} applied to the exponential function we have that
\[ \mathrm e ^{-t H_\nabla -t V} = \lim _{j \to \infty} \mathrm e ^{-t H_\nabla ^{(j)} -t V} \]
strongly in $\Gamma^2(E)$, while from the Trotter-Kato formula (see \cite{Kato74}) we have that 
\[ \mathrm e ^{-t H_\nabla ^{(j)} -t V} = \lim _{l \to \infty} \left( \mathrm e ^{-\frac t {2^{k_l}} L_\nabla ^{(j)}} \, \mathrm e ^{-\frac t {2^{k_l}} V} \right) ^{2^{k_l}} \]
strongly in $\Gamma^2(E |_{U_j})$, where $(k_l) _{l \in \mathbb N} \subseteq \mathbb N$ is the subsequence found right above this theorem. It follows that there exists a sub-subsequence $(k_{l_m}) _{m \in \mathbb N} \subseteq \mathbb N$ such that
\[ [\mathrm e ^{-t H_\nabla ^{(j)} - t V} \, \eta] (x) = \lim _{m \to \infty} \left( \mathrm e ^{-\frac t {2^{k_{l_m}}} H_\nabla ^{(j)}} \, \mathrm e ^{-\frac t {2^{k_{l_m}}} V} \, \eta \right) ^{2^{k_{l_m}}} (x) \]
for all $\eta \in \Gamma^2 (E)$ and almost all $x \in U_j$.

It follows that if $\eta, \eta' \in \Gamma^2(E)$, then
\begin{gather}
\nonumber \langle \mathrm e ^{-t H_\nabla - t V} \eta, \eta' \rangle _{\Gamma^2(E)} = \lim _{j \to \infty} \langle \mathrm e ^{-t H_\nabla ^{(j)} -t V} \eta, \eta' \rangle _{\Gamma^2 (E | _{U_j})} = \\
\nonumber = \lim _{j \to \infty} \left< \lim _{m \to \infty} \left( \mathrm e ^{-\frac t {2^{k_{l_m}}} H_\nabla ^{(j)}} \, \mathrm e ^{-\frac t {2^{k_{l_m}}} V} \right) ^{2^{k_{l_m}}} \eta, \eta' \right> _{\Gamma^2 (E | _{U_j})} = \\
\nonumber = \lim _{j \to \infty} \int _{U_j} \mathrm d x \lim _{m \to \infty} \left< \int _{U_j} \mathrm d x_1 \, h_\nabla ^{(j)} \left( \frac t {2^{k_{l_m}}}, x, x_1 \right) \mathrm e ^{-\frac t {2^{k_{l_m}}} V(x_1)} \right. \dots \\
\nonumber \dots \left. \int _{U_j} \mathrm d x_{2^{k_{l_m}}} \, h_\nabla ^{(j)} \left( \frac t {2^{k_{l_m}}}, x_{2^{k_{l_m}} - 1}, x_{2^{k_{l_m}}} \right) \mathrm e ^{-\frac t {2^{k_{l_m}}} V(x_{2^{k_{l_m}}})} \eta(x_{2^{k_{l_m}}}) , \eta'_x \right> _{E_x} = \\
\nonumber = \lim _{j \to \infty} \int _{U_j} \mathrm d x \lim _{m \to \infty}  W_{t, {\eta'_x} ^\flat, \eta, x} ^{(j)} \left( \operatorname V_{t, x, k_{l_m}} | _{\mathcal C_{t,x} (\overline{U_j})} \right) = \\
\nonumber = \lim _{j \to \infty} \int _{U_j} \mathrm d x \lim _{m \to \infty} \int _{\mathcal C_{t,x} (\overline{U_j})} [\rho_{t, {\eta'_x} ^\flat, \eta, x} ^{(j)} (c)] \, [\operatorname V_{t, x, k_{l_m}} (c)] \, \mathrm d w_{t,x} ^{(j)} (c) = \\
= \lim _{j \to \infty} \int _{U_j} \mathrm d x \lim _{m \to \infty} \int _{\mathcal C_{t,x} (\overline{U_j})} [\rho_{t, {\eta'_x} ^\flat, \eta, x} ^{(j)} (c)] \, [\operatorname V_{t, x, k_{l_m}} (c)] \, \mathrm d w_{t,x} (c) + \label{first summand} \\
+ \lim _{j \to \infty} \int _{U_j} \mathrm d x \lim _{m \to \infty} \int _{\mathcal C_{t,x} (\overline{U_j})} [\rho_{t, {\eta'_x} ^\flat, \eta, x} ^{(j)} (c)] \, [\operatorname V_{t, x, k_{l_m}} (c)] \, \mathrm d [w_{t,x} ^{(j)} - w_{t,x}] (c) \ . \label{second summand}
\end{gather}

Due to the weak convergence of $\operatorname V_{t,x,{k_{l_m}}}$ to $\operatorname V_{t,x}$ in $\Gamma^2 (\mathcal E)$, and therefore in $\Gamma^2 (\mathcal E | _{\mathcal C_{t,x} (\overline{U_j})})$ (both spaces being considered with respect to the measure $w_{t,x}$), the term (\ref{first summand}) is
\begin{gather*}
\lim _{j \to \infty} \int _{U_j} \mathrm d x \int _{\mathcal C_{t,x} (\overline{U_j})} [\rho_{t, {\eta'_x} ^\flat, \eta, x} ^{(j)} (c)] \, [\operatorname V_{t, x} (c)] \, \mathrm d w_{t,x} (c) = \\
= \int _M \mathrm d x \int _{\mathcal C_{t,x}} [\rho_{t, {\eta'_x} ^\flat, \eta, x} (c)] \, [\operatorname V_{t, x} (c)] \, \mathrm d w_{t,x} (c) = \\
= \int _M \mathrm d x \int _{\mathcal C_{t,x}} \langle \eta_{c(t)}, \mathcal P_{t,x} ^2 (\operatorname V_{t, x}) (c) \, \eta'_x \rangle _{E_{c(t)}} \, \mathrm d w_{t,x} (c) = \\
= \int _M \mathrm d x \int _{\mathcal C_{t,x}} \langle [\mathcal P_{t,x} ^2 (\operatorname V_{t, x}) (c)] ^* \, \eta_{c(t)}, \eta'_x \rangle _{E_{c(t)}} \, \mathrm d w_{t,x} (c) = \\
= \int _M \mathrm d x \left\langle \int _{\mathcal C_{t,x}} [\mathcal P_{t,x} ^2 (\operatorname V_{t, x}) (c)] ^* \, \eta_{c(t)} \, \mathrm d w_{t,x} (c), \eta'_x \right\rangle _{E_x} \ .
\end{gather*}

In order to obtain the limit when $j \to \infty$ we have applied the dominated convergence theorem on $M$ to the limit
\[ \lim _{j \to \infty} 1 _{U_j} (x) \int _{\mathcal C_{t,x} (\overline{U_j})} [\rho_{t, {\eta'_x} ^\flat, \eta, x} ^{(j)} (c)] [\operatorname V_{t,x,{k_l}} (c)] \, \mathrm d w_{t,x} (c) = \int _{\mathcal C_{t,x}} [\rho_{t, {\eta'_x} ^\flat, \eta, x} (c)] [\operatorname V_{t,x,{k_l}} (c)] \, \mathrm d w_{t,x} (c) \]
valid for almost all $x \in M$, where $1_{U_j}$ is the characteristic function of $U_j$. The domination is insured by the fact that both $\| \rho_{t, {\eta'_x} ^\flat, \eta, x} ^{(j)} (c) \| _{\mathcal E ^* _c}$ and $\| \rho_{t, {\eta'_x} ^\flat, \eta, x} (c) \| _{\mathcal E ^* _c}$ are bounded by $\| \eta'_x \| _{E_x} \, \| \eta (c(t)) \| _{E_{c(t)}}$, and $\| \operatorname V_{t,x,{k_l}} (c) \| _{\mathcal E _c}$ is bounded by $\mathrm e ^{-t \beta}$, for almost all $c$, hence
\begin{gather*}
\left| 1 _{U_j} (x) \int _{\mathcal C_{t,x} (\overline{U_j})} [\rho_{t, {\eta'_x} ^\flat, \eta, x} ^{(j)} (c)] [\operatorname V_{t,x,{k_l}} (c)] \, \mathrm d w_{t,x} (c) \right| \le \mathrm e ^{-t \beta} \, \| \eta'_x \| _{E_x} \int _{\mathcal C_{t,x} (\overline{U_j})}  \| \eta (c(t)) \| _{E_{c(t)}} \, \mathrm d w_{t,x} (c) \le \\
\le \mathrm e ^{-t \beta} \, \| \eta'_x \| _{E_x} \int _{\mathcal C_{t,x}}  \| \eta (c(t)) \| _{E_{c(t)}} \, \mathrm d w_{t,x} (c) = \mathrm e ^{-t \beta} \, \| \eta' _x \| _{E_x} \int _M h(t,x,y) \, \| \eta_y \| _{E_y} \, \mathrm d y \le \\
\le \mathrm e ^{-t \beta} \, \| \eta' _x \| _{E_x} \, (\mathrm e ^{-t H_{\mathrm d, 0}} \| \eta \|) (x) \ ,
\end{gather*}
and the latter function is finite at every $x \in M$ and has the integral
\[ \int _M \| \eta' _x \| _{E_x} \, (\mathrm e ^{-t H} \| \eta \|) (x) \, \mathrm d x = \langle \| \eta' \|, \, \mathrm e ^{-t H} \| \eta \| \rangle _{L^2 (M)} \le \| \eta' \| _{\Gamma^2 (E)} \, \| \eta \| _{\Gamma^2 (E)} < \infty \ . \]

Using the same majorizations as above, and using that $w_{t,x} ^{(j)} \le w_{t,x}$, the term (\ref{second summand}) is $0$ because
\begin{gather*}
\lim _{j \to \infty} \left| \int _{U_j} \mathrm d x \lim _{m \to \infty} \int _{\mathcal C_{t,x} (\overline{U_j})} [\rho_{t, {\eta'_x} ^\flat, \eta, x} ^{(j)} (c)] \, [\operatorname V_{t, x, k_{l_m}} (c)] \, \mathrm d [w_{t,x} ^{(j)} - w_{t,x}] (c) \right| \le \\
\le \lim _{j \to \infty} \int _{U_j} \mathrm d x \lim _{m \to \infty} \int _{\mathcal C_{t,x} (\overline{U_j})} \| \rho_{t, {\eta'_x} ^\flat, \eta, x} ^{(j)} (c) \| _{\mathcal E ^* _c} \, \| \operatorname V_{t, x, k_{l_m}} (c) \| _{\mathcal E _c} \, \mathrm d [w_{t,x} - w_{t,x} ^{(j)}] (c) \le \\
\le \mathrm e ^{-t \beta} \lim _{j \to \infty} \int _{U_j} \mathrm d x \, \| \eta'_x \| _{E_x} \int _{\mathcal C_{t,x} (\overline{U_j})} \| \eta_{c(t)} \| _{E_{c(t)}} \, \mathrm d [w_{t,x} - w_{t,x} ^{(j)}] (c) \le \\
\le \mathrm e ^{-t \beta} \lim _{j \to \infty} \int _{U_j} \mathrm d x \, \| \eta'_x \| _{E_x} \left[ \int _{\mathcal C_{t,x}} \| \eta_{c(t)} \| _{E_{c(t)}} \, \mathrm d w_{t,x} (c) - \int _{\mathcal C_{t,x} (\overline{U_j})} \| \eta_{c(t)} \| _{E_{c(t)}} \, \mathrm d w_{t,x} ^{(j)} (c) \right] \le \\
\le \| \eta' \| _{\Gamma^2 (E)} \, \lim _{j \to \infty} \left\| \mathrm e ^{-t H} \| \eta \| - \mathrm e ^{-t H ^{(j)}} \| \eta \| \right\| _{L^2 (M)} = 0 \ .
\end{gather*}

We conclude that
\[ \langle \mathrm e ^{-t H_\nabla - t V} \eta, \eta' \rangle _{\Gamma^2(E)} = \int _M \mathrm d x \left\langle \int _{\mathcal C_{t,x}} [\mathcal P_{t,x} ^2 (\operatorname V_{t, x}) (c)] ^* \, \eta_{c(t)} \, \mathrm d w_{t,x} (c), \eta'_x \right\rangle _{E_x} \ , \]
whence
\[ (\mathrm e ^{-t H_\nabla - t V} \eta) (x) = \int _{\mathcal C_{t,x}} [\mathcal P_{t,x} ^2 (\operatorname V_{t, x}) (c)] ^* \, \eta_{c(t)} \, \mathrm d w_{t,x} (c) \]
for every $\eta \in \Gamma^2(E)$ and almost all $x \in M$.
\end{proof}

Notice that if we define the map $\mathcal V_{t,x} : \mathcal C_{t,x} \to \operatorname{End} E_x$ by
\[ \mathcal V _{t,x} (c) = [\mathcal P_{t,x} ^2 (\operatorname V_{t, x}) (c)] ^* \, [\mathcal P_{t,x} ^2 (\operatorname {Id}) (c)] \]
we may trivially rewrite the Feynman-Kac formula under the equivalent form
\[ (\mathrm e ^{-t H_\nabla - t V} \eta) (x) = \int _{\mathcal C_{t,x}} [\mathcal V_{t,x} (c)] \, [\mathcal P_{t,x} ^2 (\operatorname {Id}) (c)] ^{-1} \, \eta_{c(t)} \, \mathrm d w_{t,x} (c) \ ; \]
so rewritten, the Feynman-Kac formula in bundles has been obtained by other authors, too, but in other contexts and under different assumptions:
\begin{itemize}[wide]
\item the authors of \cite{BP08} use functional-analytic techniques (again based on Chernoff's theorem) but the potential $V$ is assumed smooth and $M$ is a closed manifold;
\item the authors of \cite{DT01} use probabilistic techniques to give abstract conditions in proposition 4.5 under which the Feynman-Kac formula is valid, after which proposition 5.1 shows that these conditions are met when $M$ is closed; the potential (denoted therein by $\mathcal R$) is assumed smooth (p.48); 
\item in \cite{Guneysu10} the Feynman-Kac formula is proved using functional-analytic techniques, but assuming the existence of the stochastic parallel transport, under the assumption that the manifold is both metrically and stochastically complete, and under very generous hypotheses on the potential (in theorem 3.1 it is assumed essentially bounded, and in theorem 3.3 the result is extended to the more general situation when the potential is locally square-integrable); in remark 1.4 therein the author sketches the modifications to be made to the proof in order for the assumption of metric completeness to be dropped, but does not give further details;
\item in \cite{BG20} (arXiv preprint, submitted for publication but still unpublished at the date of writing of this text), the potential $V$, which may be understood as a differential operator of order $0$, is assumed now to be a differential operator of order $0$ or $1$ acting on the smooth sections in $E$ (in particular, this means that $V$ has smooth coefficients), such that the operator $\nabla ^* \nabla + V$ should be sectorial; such a potential gives rise naturally to a stochastic differential equation, the unique solution of which is assumed to be locally square-integrable in a certain uniform way with respect to $x \in M$ (in our notations); this hypothesis guarantees that the equality in the Feynman-Kac formula will be valid everywhere, not just almost everywhere. No restrictions are placed on the manifold $M$.
\end{itemize}

It can be seen, by way of comparison with the cited previous work, that the Feynman-Kac formula presented here seems to be the most general one currently existing in the literature.

\begin{corollary}
If $V : M \to \mathbb R$ is continuous and lower-bounded, then the above Feynman-Kac formula reduces to
\[ (\mathrm e ^{-t H_\nabla -t V} \eta) (x) = \int _{\mathcal C_{t,x}} \mathrm e ^{- \int _0 ^t V(c(s)) \, \mathrm d s} \, [\mathcal P_{t,x} ^2 (\operatorname {Id}) (c)] ^{-1} \, \eta_{c(t)} \, \mathrm d w_{t,x} (c) \ . \]
\end{corollary}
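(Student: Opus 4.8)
The plan is to exploit the fact that a real scalar potential acts on each fibre as a multiple of the identity, so that the tensor section $\operatorname V_{t,x,k}$ collapses to a scalar times the identity section $\operatorname{Id}$. Concretely, when $V$ is interpreted as the endomorphism field $x \mapsto V(x)\operatorname{Id}_{E_x}$ (which is self-adjoint, locally integrable since continuous, and bounded below by $\inf V > -\infty$, so that the general Feynman-Kac theorem applies verbatim), each factor in the definition of $\operatorname V_{t,x,k}$ becomes $\mathrm e^{-\frac t{2^k}V(c(\frac{jt}{2^k}))}\operatorname{Id}_{E_{c(jt/2^k)}}$. Pulling the scalars out of the tensor product, I would first record the identity
\[ \operatorname V_{t,x,k}(c) = \exp\left( -\frac t{2^k}\sum_{j=1}^{2^k} V\left( c\left( \frac{jt}{2^k}\right)\right)\right) \operatorname{Id}(c) \]
for every $c \in \mathcal C_{t,x}$ and every $k$, using the inductive-limit identification $\bigotimes_j \operatorname{Id}_{E_{c(jt/2^k)}} = \operatorname{Id}(c)$.

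The second step is to pass to the limit $k \to \infty$. Since $V$ and $c$ are continuous, the map $s \mapsto V(c(s))$ is continuous on the compact interval $[0,t]$, hence Riemann integrable, and the right-endpoint Riemann sums appearing above converge to $\int_0^t V(c(s))\,\mathrm ds$ for each individual curve $c$. Thus $\operatorname V_{t,x,k}(c) \to \mathrm e^{-\int_0^t V(c(s))\,\mathrm ds}\operatorname{Id}(c)$ pointwise on $\mathcal C_{t,x}$. Because $V \ge \beta$ forces $\|\operatorname V_{t,x,k}(c)\|_{\mathcal E_c} \le \mathrm e^{-t\beta}$, a constant that is integrable with respect to the finite measure $w_{t,x}$, the dominated convergence theorem upgrades this to strong convergence in $\Gamma^2(\mathcal E)$. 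Since strong convergence implies weak convergence, and since the subsequential weak limit $\operatorname V_{t,x}$ is thereby pinned down uniquely, I conclude that $\operatorname V_{t,x} = \mathrm e^{-\int_0^t V(c(s))\,\mathrm ds}\operatorname{Id}$.

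The final step transports this identity through $\mathcal P_{t,x}^2$ and into the Feynman-Kac formula. Writing $g(c) = \mathrm e^{-\int_0^t V(c(s))\,\mathrm ds}$, a real, strictly positive function, I would use the defining pairing of $\mathcal P_{t,x}^2$ together with the linearity of the functional $\rho_{t,v^\flat,\eta,x}(c) \in \mathcal E_c^*$ to obtain $\mathcal P_{t,x}^2(g\operatorname{Id})(c) = g(c)\,\mathcal P_{t,x}^2(\operatorname{Id})(c)$ for almost all $c$; here the reality of $g$ matters, so that the conjugate-linearity of $\mathcal P_{t,x}^2$ reproduces $g(c)$ rather than its conjugate. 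Taking adjoints gives $[\mathcal P_{t,x}^2(\operatorname V_{t,x})(c)]^* = g(c)[\mathcal P_{t,x}^2(\operatorname{Id})(c)]^*$, and since the previously established norm identity $\| \mathcal P_{t,v,x}^2(\operatorname{Id})(c)\|_{E_{c(t)}} = \|v\|_{E_x}$ shows that $\mathcal P_{t,x}^2(\operatorname{Id})(c)$ is norm-preserving between the equidimensional fibres $E_x$ and $E_{c(t)}$, it is an isometric isomorphism, whence its adjoint equals its inverse. Substituting $[\mathcal P_{t,x}^2(\operatorname V_{t,x})(c)]^* = g(c)[\mathcal P_{t,x}^2(\operatorname{Id})(c)]^{-1}$ into the general Feynman-Kac formula yields the claimed identity. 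I expect the only genuinely delicate point to be the identification of the weak limit $\operatorname V_{t,x}$ with the explicit section $g\operatorname{Id}$: one must observe that the full sequence, not merely the subsequence selected in the construction, converges strongly, which is what forces every subsequential weak limit to coincide with $g\operatorname{Id}$.
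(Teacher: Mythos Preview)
Your argument is correct and follows the same route as the paper: identify $\operatorname V_{t,x,k}(c)$ as a Riemann-sum scalar times $\operatorname{Id}(c)$, pass to the limit using continuity of $V\circ c$ and dominated convergence, and feed the resulting identity $\operatorname V_{t,x}=g\operatorname{Id}$ into $\mathcal P_{t,x}^2$. You simply spell out several points the paper leaves implicit (the reality of $g$ versus the conjugate-linearity of $\mathcal P_{t,x}^2$, the adjoint-equals-inverse step via the isometry of the stochastic parallel transport, and the fact that strong convergence of the full sequence forces the subsequential weak limit $\operatorname V_{t,x}$ to equal $g\operatorname{Id}$).
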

\begin{proof}
When $V$ is a continuous scalar function,
\begin{align*}
\operatorname V_{t,x,k} (c) & = \mathrm e ^{- \frac t {2^k} V \left( c \left( \frac {t} {2^k} \right) \right)} \otimes \dots \otimes \mathrm e ^{- \frac t {2^k} V(c (t))} = \mathrm e ^{- \frac t {2^k} \sum _{j=1} ^{2^k} V \left( c \left( \frac {jt} {2^k} \right) \right)} \, \operatorname{Id} _{E_{c(\frac t {2^k})}} \otimes \dots \otimes \operatorname{Id} _{E_{c(\frac {2^k t} {2^k})}} = \\
& = \mathrm e ^{- \frac t {2^k} \sum _{j=1} ^{2^k} V \left( c \left( \frac {jt} {2^k} \right) \right)} \, \operatorname{Id} \to \mathrm e ^{- \int _0 ^t V(c(s)) \, \mathrm d s} \, \operatorname{Id} \ ,
\end{align*}
the convergence being valid for all $c \in \mathcal C_{t,x}$, therefore also weakly in $\Gamma^2 (\mathcal E)$, using the dominated convergence theorem. It follows that
\[ \mathcal P _{t,x} ^2 (\operatorname V_{t,x}) = \mathrm e ^{- \int _0 ^t V(c(s)) \, \mathrm d s} \, \mathcal P _{t,x} ^2 (\operatorname {Id}) \]
and the conclusion is immediate.
\end{proof}

\begin{remark}
Comparing the results herein with the ones obtained by the author in \cite{Mustatea22}, we notice that if $E = M \times \mathbb C$ and $\nabla = \mathrm d + \mathrm i \alpha$, then
\[ \mathcal P_{t,x} ^2 (\operatorname{Id}) = \mathrm e^{- \mathrm i \operatorname{Strat} _{t,x} (\alpha)} \]
for almost all $x \in M$. This shows once more that the Stratonovich stochastic integral is the "most geometrically-flavoured" of all the stochastic integrals considered therein, since its exponential (including the negative imaginary unit) is the stochastic parallel transport, in perfect analogy with how the parallel transport along some smooth curve $c$ with respect to $\nabla$ is $\mathrm e ^{- \mathrm i \int _c \alpha}$.
\end{remark}

When $V=0$, the Feynman-Kac formula and the disintegration theorem for measures allow us to derive a formula expressing the heat kernel $h_\nabla$ in the bundle $E$ in terms of the heat kernel $h$ acting on functions and the stochastic parallel transport in $E$. In order to state it, we shall need to introduce some notations.

Let us endow the manifold $M$ with the measure $h(t, x, \cdot) \, \mathrm d x$, where $\mathrm d x$ is the natural measure on $M$. The map $p_t : \mathcal C_{t,x} \to M$ satisfies the hypotheses of the disintegration theorem (p.78-III and following of \cite{DM78}), therefore there exists a family $(\nu_{t,x,y}) _{y \in M}$ of Borel regular probabilities on $\mathcal C_{t,x}$, uniquely determined for almost all $y \in M$, such that $\nu_{t,x,y}$ is concentrated on $p_t ^{-1} (\{y\}) = \{ c \in \mathcal C_{t,x} \mid c(t) = y \}$ for almost all $y \in M$, and
\[ \int _{\mathcal C_{t,x}} f \, \mathrm d w_{t,x} = \int _M h(t, x, y) \left( \int _{p_t ^{-1} (\{y\})} f \, \mathrm d \nu_{t,x,y} \right) \mathrm d y \]
for all $f \in L^1 (\mathcal C_{t,x})$.

\begin{corollary}
\[ h_\nabla (t,x,y) = h(t,x,y) \int_{p_t ^{-1} (\{y\})} [\mathcal P_{t,x} ^2 (\operatorname{Id})] ^{-1} \, \mathrm d \nu_{t,x,y} \in E_x \otimes E_y ^* \]
for all $t>0$, all $x \in M$ and almost all $y \in M$.
\end{corollary}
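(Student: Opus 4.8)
The plan is to specialize the Feynman-Kac formula to the case $V = 0$ and then apply the disintegration theorem in the endpoint variable $c(t)$. When $V = 0$ one has $\mathrm e^{-\frac t {2^k} V} = \operatorname{Id}$, so that $\operatorname V_{t,x,k}(c) = \operatorname{Id}_{E_{c(t/2^k)}} \otimes \dots \otimes \operatorname{Id}_{E_{c(t)}} = \operatorname{Id}(c)$ for every $k$, and hence the (sub)sequential weak limit is $\operatorname V_{t,x} = \operatorname{Id}$. The Feynman-Kac formula then reads $(\mathrm e^{-t H_\nabla}\eta)(x) = \int_{\mathcal C_{t,x}} [\mathcal P_{t,x}^2(\operatorname{Id})(c)]^* \, \eta_{c(t)} \, \mathrm d w_{t,x}(c)$. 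Because the corollary above shows that $\mathcal P_{t,x}^2(\operatorname{Id})(c) : E_x \to E_{c(t)}$ is a norm-preserving linear map between fibres of equal complex dimension $r$, it is a surjective isometry, hence unitary, so that $[\mathcal P_{t,x}^2(\operatorname{Id})(c)]^* = [\mathcal P_{t,x}^2(\operatorname{Id})(c)]^{-1}$; thus I may rewrite the right-hand side with the inverse in place of the adjoint.

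Next I would recall the defining property of the heat kernel, $(\mathrm e^{-t H_\nabla}\eta)(x) = \int_M h_\nabla(t,x,y)\,\eta(y)\,\mathrm d y$, so that the identity to be proved becomes the equality of $\int_M h_\nabla(t,x,y)\,\eta(y)\,\mathrm d y$ with $\int_{\mathcal C_{t,x}} [\mathcal P_{t,x}^2(\operatorname{Id})(c)]^{-1}\,\eta_{c(t)}\,\mathrm d w_{t,x}(c)$. To apply the disintegration theorem to the latter I must first check integrability: since the inverse is again an isometry, $\|[\mathcal P_{t,x}^2(\operatorname{Id})(c)]^{-1}\eta_{c(t)}\|_{E_x} = \|\eta_{c(t)}\|_{E_{c(t)}}$, whose square integrates to $(\mathrm e^{-tH}\|\eta\|^2)(x)$, finite for almost all $x$; as $w_{t,x}$ is a finite measure, the integrand lies in $L^1(\mathcal C_{t,x})$. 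The integrand takes values in the fixed finite-dimensional space $E_x$ (all curves in $\mathcal C_{t,x}$ start at $x$), so I would fix an orthonormal basis of $E_x$, apply the scalar disintegration theorem to each coordinate, and reassemble to obtain $\int_{\mathcal C_{t,x}} [\mathcal P_{t,x}^2(\operatorname{Id})(c)]^{-1}\eta_{c(t)}\,\mathrm d w_{t,x}(c) = \int_M h(t,x,y)\left(\int_{p_t^{-1}(\{y\})} [\mathcal P_{t,x}^2(\operatorname{Id})(c)]^{-1}\eta_{c(t)}\,\mathrm d \nu_{t,x,y}(c)\right)\mathrm d y$.

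On the fibre $p_t^{-1}(\{y\})$ one has $c(t) = y$ for every $c$, so $\eta_{c(t)} = \eta_y$ is constant there and may be pulled out of the inner integral; moreover $[\mathcal P_{t,x}^2(\operatorname{Id})(c)]^{-1}$ then takes values in the fixed space $\operatorname{Hom}(E_y, E_x) = E_x \otimes E_y^*$, so the inner integral defines an element of $E_x \otimes E_y^*$. Combining this with the previous displays yields $\int_M h_\nabla(t,x,y)\,\eta(y)\,\mathrm d y = \int_M \left[\, h(t,x,y)\int_{p_t^{-1}(\{y\})}[\mathcal P_{t,x}^2(\operatorname{Id})(c)]^{-1}\,\mathrm d\nu_{t,x,y}(c)\right]\eta_y\,\mathrm d y$ for every $\eta \in \Gamma^2(E)$. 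Since $\eta$ is arbitrary, the two operator-valued kernels must coincide for almost all $y$, which is precisely the claimed formula.

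The main obstacle will be the careful handling of the vector- and operator-valued disintegration: the theorem quoted above is stated for scalar $f \in L^1(\mathcal C_{t,x})$, so I must reduce to the scalar case through a fixed basis of $E_x$ (and, for identifying the kernel on each $p_t^{-1}(\{y\})$, through the fixed space $E_x \otimes E_y^*$), and I must confirm that the measurable $y$-dependence of the disintegrating measures $\nu_{t,x,y}$ is compatible with this coordinatization, so that the resulting kernel is genuinely measurable in $y$. The remaining steps, namely extracting the constant $\eta_y$ from the inner integral and invoking the arbitrariness of $\eta$ to pass from an integral identity to an almost-everywhere pointwise identity of kernels, are routine.
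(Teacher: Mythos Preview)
Your proposal is correct and follows essentially the same route as the paper: specialize the Feynman-Kac formula to $V=0$, rewrite the adjoint as the inverse using unitarity, apply the disintegration of $w_{t,x}$ along $p_t$, pull $\eta_y$ out of the fibre integral, and conclude by arbitrariness of $\eta$. The only cosmetic difference is that the paper reduces the vector-valued integrals to scalars by pairing with an arbitrary $\omega \in E_x^*$, whereas you do so via an orthonormal basis of $E_x$; these are equivalent, and your additional remarks on integrability and measurable $y$-dependence are reasonable care that the paper leaves implicit.
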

\begin{proof}
Choosing $V=0$ in the Feynman-Kac formula in bundles, we obtain
\begin{gather*}
\omega \left[ \int _M h_\nabla (t,x,y) \, \eta_y \, \mathrm d y \right] = \omega \{ [\mathrm e ^{-t H_\nabla} \eta] (x) \} =  \omega \left[ \int _{\mathcal C_{t,x}} [\mathcal P_{t,x} ^2 (\operatorname {Id}) (c)] ^{-1} \, \eta_{c(t)} \, \mathrm d w_{t,x} (c) \right] = \\
= \omega \left[ \int _M \mathrm d y \, h(t,x,y) \int _{p_t ^{-1} (\{y\})} [\mathcal P_{t,x} ^2 (\operatorname {Id}) (c)] ^{-1} \, \eta_y \, \mathrm d \nu_{t,x,y} (c) \right]
\end{gather*}
for all $\eta \in \Gamma^2 (E)$ and $\omega \in E_x ^*$, whence the conclusion is clear.
\end{proof}

\begin{remark}
Since $h_\nabla (t,x,y) ^* = h_\nabla (t,y,x)$, the above equality may be rewritten, equivalently, as
\[ h_\nabla (t,x,y) = h(t,x,y) \int_{p_t ^{-1} (\{x\})} [\mathcal P_{t,y} ^2 (\operatorname{Id})] \, \mathrm d \nu_{t,y,x} \in E_x \otimes E_y ^* \]
for all $t>0$ and $y \in M$, and for almost all $x \in M$.
\end{remark}

If $M = \bigcup _{j \in \mathbb N} U_j$, we already know that $h_j \to h$ pointwise; as a final application of all the results obtained above, we shall show that $h_\nabla ^{(j)} \to h_\nabla$ pointwise, too. It is clear that the disintegration theorem may be used, analogously, on each space $\mathcal C_{t,x} (\overline{U_j})$ endowed with the measure $w_{t,x} ^{(j)}$, in order to obtain
\[ \int _{\mathcal C_{t,x} (\overline{U_j})} f \, \mathrm d w_{t,x} ^{(j)} = \int _{U_j} h ^{(j)} (t, x, y) \left( \int _{p_t ^{-1} (\{y\})} f \, \mathrm d \nu_{t,x,y} ^{(j)} \right) \mathrm d y \]
for all $f \in L^1 (\mathcal C_{t,x} (\overline{U_j}), w_{t,x} ^{(j)}) \subseteq L^1 (\mathcal C_{t,x}, w_{t,x})$.

\begin{lemma}
\[ \lim _{j \to \infty} \int _{p_t ^{-1} (\{y\})} f \, \mathrm d \nu_{t,x,y} ^{(j)} = \int _{p_t ^{-1} (\{y\})} f \, \mathrm d \nu_{t,x,y} \]
for all $f \in L^1 (\mathcal C_{t,x})$, all $t>0$ and $x \in M$, and almost all $y \in M$.
\end{lemma}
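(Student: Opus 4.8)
The plan is to recognise the two fibre integrals as Radon--Nikodym densities of pushforward measures on $M$, and thereby to reduce the statement to the monotone convergence $w_{t,x}^{(j)} \nearrow w_{t,x}$. By decomposing $f$ into its positive, negative, real and imaginary parts and using the linearity of the disintegration in $f$, it suffices to treat the case $f \ge 0$; I shall write $g_j(y) = \int_{p_t^{-1}(\{y\})} f \, \mathrm d \nu_{t,x,y}^{(j)}$ and $g(y) = \int_{p_t^{-1}(\{y\})} f \, \mathrm d \nu_{t,x,y}$. Applying the two disintegration formulas with $f$ replaced by $f \cdot (1_B \circ p_t)$ for an arbitrary Borel set $B \subseteq M$, and using that each bridge measure is concentrated on the fibre $p_t^{-1}(\{y\})$ on which $1_B \circ p_t$ equals the constant $1_B(y)$, one obtains
\[ (p_t)_* (f \cdot w_{t,x}) (B) = \int_B h(t,x,y) \, g(y) \, \mathrm d y, \qquad (p_t)_* (f \cdot w_{t,x}^{(j)}) (B) = \int_B h^{(j)}(t,x,y) \, g_j(y) \, 1_{U_j}(y) \, \mathrm d y. \]
In other words, $h(t,x,\cdot) \, g$ and $h^{(j)}(t,x,\cdot) \, g_j \, 1_{U_j}$ are exactly the densities, with respect to $\mathrm d y$, of the pushforwards under $p_t$ of $f \cdot w_{t,x}$ and $f \cdot w_{t,x}^{(j)}$.

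Next I would prove that $w_{t,x}^{(j)} \nearrow w_{t,x}$, i.e. that $w_{t,x}^{(j)} \le w_{t,x}^{(j+1)}$ and $w_{t,x}^{(j)}(A) \to w_{t,x}(A)$ for every Borel $A \subseteq \mathcal C_{t,x}$. On a nonnegative bounded cylindrical function the integrals against $w_{t,x}^{(j)}$ and $w_{t,x}$ are finite iterated integrals of the kernels $h^{(j)}$ and $h$ over $U_j^n$ and $M^n$ respectively; since $h^{(j)} \nearrow h$ pointwise and monotonically and $U_j \nearrow M$, the monotone convergence theorem gives both the monotone increase and the convergence of these integrals. A monotone-class argument upgrades this to arbitrary Borel sets, and, together with the known inequality $w_{t,x}^{(j)} \le w_{t,x}$, shows that the densities $\theta_j = \mathrm d w_{t,x}^{(j)} / \mathrm d w_{t,x}$ satisfy $0 \le \theta_j \nearrow 1$ almost everywhere. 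Consequently, for $f \ge 0$ we have $f \cdot w_{t,x}^{(j)} = f \theta_j \cdot w_{t,x} \nearrow f \cdot w_{t,x}$ as measures, and since $(p_t)_* \mu (B) = \mu(p_t^{-1}(B))$ preserves monotone increase of measures, also $(p_t)_* (f \cdot w_{t,x}^{(j)}) \nearrow (p_t)_* (f \cdot w_{t,x})$ on $M$.

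I would then convert this monotone convergence of measures into one of densities and divide out the kernel. If $\mu_j \nearrow \mu$ are finite measures, all absolutely continuous with respect to $\mathrm d y$ with densities $\psi_j$ and $\psi$, then integrating $\mu_j \le \mu_{j+1}$ over $\{\psi_j > \psi_{j+1}\}$ forces $\psi_j \le \psi_{j+1}$ almost everywhere, whereupon the monotone convergence theorem identifies the a.e.-limit of $(\psi_j)_j$ with $\psi$; applied to the measures of the first paragraph this gives $h^{(j)}(t,x,y) \, g_j(y) \, 1_{U_j}(y) \nearrow h(t,x,y) \, g(y)$ for almost all $y \in M$. For almost every $y$ there is a $J$ with $y \in U_j$ for all $j \ge J$ (because $U_j \nearrow M$), and then $h^{(j)}(t,x,y) > 0$ by the strict positivity of the heat kernel on the connected relatively compact domain $U_j$; since moreover $h^{(j)}(t,x,y) \to h(t,x,y) > 0$, dividing the displayed relation by $h^{(j)}(t,x,y)$ yields $g_j(y) \to g(y)$ for almost all $y$, and the linearity noted at the outset extends this to all $f \in L^1(\mathcal C_{t,x})$. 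The main obstacle is thus establishing $w_{t,x}^{(j)} \nearrow w_{t,x}$ and carrying out the density-convergence passage: the pushforward viewpoint is what makes the argument work, since it converts the geometric degeneration of the bridge measures $\nu_{t,x,y}^{(j)}$ into a transparent monotone convergence and reduces the concluding step to the strict positivity of the heat kernel, sidestepping any direct comparison of the conditional measures themselves.
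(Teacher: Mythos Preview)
The paper states this lemma without proof, so there is no argument to compare against. Your proposal is correct and fills the gap cleanly. Interpreting the fibre integrals $g_j$ and $g$ as (up to the factors $h^{(j)}$ and $h$) the Radon--Nikodym densities of $(p_t)_*(f\,w_{t,x}^{(j)})$ and $(p_t)_*(f\,w_{t,x})$ with respect to the Riemannian measure is exactly the right reduction, and the monotone increase $w_{t,x}^{(j)}\nearrow w_{t,x}$ (which indeed follows from $h^{(j)}\nearrow h$ on cylindrical functions together with a monotone-class argument, and is used implicitly elsewhere in the paper) then yields the monotone increase of the pushforwards and hence of their densities. The concluding division by $h^{(j)}(t,x,y)$ is legitimate because once $j$ is large enough that both $x$ and $y$ lie in the connected open set $U_j$, the Dirichlet heat kernel $h^{(j)}(t,x,y)$ is strictly positive, and it converges to $h(t,x,y)>0$.

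Two minor remarks. First, the statement ``for almost every $y$ there is a $J$ with $y\in U_j$ for all $j\ge J$'' in fact holds for \emph{every} $y$, since $M=\bigcup_j U_j$ with $U_j$ increasing; the ``almost every $y$'' in the final conclusion comes solely from the passage from measures to densities. Second, the disintegration of $w_{t,x}^{(j)}$ defines $\nu_{t,x,y}^{(j)}$ (and hence $g_j(y)$) only for $y\in U_j$, so $g_j(y)$ exists only eventually in $j$; your factor $1_{U_j}$ already accounts for this, and it does not affect the limit.
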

%\begin{proof}
%!!!
%\end{proof}

\begin{corollary}
$h_\nabla (t,x,y) = \lim _{j \to \infty} h_\nabla ^{(j)} (t,x,y)$ for all $t>0$ and $x,y \in M$.
\end{corollary}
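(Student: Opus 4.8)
The plan is to extract, for each domain $U_j$ in the exhaustion, an exact representation of the intrinsic bundle heat kernel $h_\nabla^{(j)}$ that features the \emph{same} global stochastic parallel transport $\mathcal P_{t,x}^2(\operatorname{Id})$ as the corollary already proved on $M$, and then to let $j\to\infty$ using the preceding lemma together with the known convergence $h^{(j)}\to h$. Concretely, I would revisit the computation in the proof of the Feynman--Kac formula specialized to $V=0$ (so that $\operatorname V_{t,x,k}=\operatorname{Id}$ for every $k$, hence $\operatorname V_{t,x}=\operatorname{Id}$): its intermediate step, carried out before the passage $j\to\infty$, already yields
\[ (\mathrm e^{-tH_\nabla^{(j)}}\eta)(x)=\int_{\mathcal C_{t,x}(\overline{U_j})}[\mathcal P_{t,x}^2(\operatorname{Id})(c)]^{-1}\,\eta_{c(t)}\,\mathrm d w_{t,x}^{(j)}(c) \]
for $\eta\in\Gamma^2(E|_{U_j})$ and almost all $x\in U_j$. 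Here $[\mathcal P_{t,x}^2(\operatorname{Id})(c)]^{-1}=[\mathcal P_{t,x}^2(\operatorname{Id})(c)]^{*}$ because the stochastic parallel transport is a fibrewise isometry, and the section that appears is the global one precisely because $\rho_{t,\omega,\eta,x}^{(j)}=\rho_{t,\omega,\eta,x}|_{\mathcal C_{t,x}(\overline{U_j})}$ by the compatibility theorem. Disintegrating this identity against $(\nu_{t,x,y}^{(j)})_{y}$ exactly as in the paragraph preceding the lemma, and matching integral kernels, I obtain the twin of the $M$-corollary,
\[ h_\nabla^{(j)}(t,x,y)=h^{(j)}(t,x,y)\int_{p_t^{-1}(\{y\})}[\mathcal P_{t,x}^2(\operatorname{Id})(c)]^{-1}\,\mathrm d\nu_{t,x,y}^{(j)}(c) \]
for almost all $x,y\in U_j$.

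Next I would pass to the limit in this representation. The scalar factor converges, $h^{(j)}(t,x,y)\to h(t,x,y)$, for all $x,y$. For the bundle factor, fix $x$ and $y$ and observe that on the fibre $p_t^{-1}(\{y\})$ the integrand $[\mathcal P_{t,x}^2(\operatorname{Id})(c)]^{-1}$ takes values in the fixed finite-dimensional space $\operatorname{Hom}(E_y,E_x)=E_x\otimes E_y^*$ and is bounded in norm by $1$; choosing bases of $E_x$ and $E_y$ reduces its integral to finitely many scalar integrals of bounded measurable functions, each lying in $L^1(\mathcal C_{t,x},w_{t,x})$ since the Wiener measure is finite. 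The preceding lemma then gives $\int_{p_t^{-1}(\{y\})}[\mathcal P_{t,x}^2(\operatorname{Id})]^{-1}\,\mathrm d\nu_{t,x,y}^{(j)}\to\int_{p_t^{-1}(\{y\})}[\mathcal P_{t,x}^2(\operatorname{Id})]^{-1}\,\mathrm d\nu_{t,x,y}$ for all $x$ and almost all $y$. Multiplying the two convergences and invoking the $M$-corollary for the limiting value, I conclude that $h_\nabla^{(j)}(t,x,y)\to h_\nabla(t,x,y)$ for every $t>0$, every $x$, and almost every $y$.

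Finally comes what I expect to be the main obstacle: upgrading this almost-everywhere statement to the pointwise equality claimed for \emph{all} $x,y$. Each $h_\nabla^{(j)}(t,\cdot,\cdot)$ is smooth (the regularity argument through \cite{Mizohata57} used earlier applies verbatim on $U_j$), and on any fixed compact set $K\subseteq M$ the operator $H_\nabla^{(j)}$ agrees, for all large $j$, with $H_\nabla$, since both are local and the Dirichlet condition only influences a neighbourhood of $\partial U_j$; hence on $K$ the kernels $h_\nabla^{(j)}$ satisfy, for large $j$, the same parabolic equation $(2\partial_t+H_{\nabla,x}+H_{\nabla,y})u=0$ as $h_\nabla$. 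The diamagnetic inequality combined with the monotone bound $h^{(j)}\le h$ yields $\|h_\nabla^{(j)}(t,x,y)\|_{op}\le h(t,x,y)$, which is locally bounded; interior parabolic estimates then provide $j$-independent $C^\infty$ bounds on compact subsets, so by Arzel\`a--Ascoli the sequence $(h_\nabla^{(j)})_j$ is precompact in $C^\infty_{loc}$. Every subsequential limit is continuous and coincides with $h_\nabla$ on a co-null set, hence with $h_\nabla$ everywhere; therefore the whole sequence converges to $h_\nabla$ in $C^\infty_{loc}$, and in particular $h_\nabla^{(j)}(t,x,y)\to h_\nabla(t,x,y)$ for all $t>0$ and all $x,y\in M$. (One could also reach every $x$ from the almost-every-$x$ version by taking adjoints via $h_\nabla^{(j)}(t,x,y)^*=h_\nabla^{(j)}(t,y,x)$ and then invoking joint continuity, but the compactness argument handles both variables at once.)
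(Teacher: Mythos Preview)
Your approach is essentially the same as the paper's: derive the analogous disintegration formula for $h_\nabla^{(j)}$ in terms of $h^{(j)}$ and the (restricted) stochastic parallel transport, apply the preceding lemma together with $h^{(j)}\to h$ to obtain convergence for all $x$ and almost all $y$, and then upgrade to all $y$ using smoothness. The paper compresses this last step into the single phrase ``an elementary argument shows that $C=M$'' (with $C$ the co-null set of good $y$'s), whereas you supply an explicit mechanism---uniform local bounds from the diamagnetic inequality, interior parabolic estimates, and Arzel\`a--Ascoli---to pass from convergence on a dense set to convergence everywhere; your version is more carefully justified at precisely the point where the paper is terse.
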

\begin{proof}
We know that $h(t,x,y) = \lim _{j \to \infty} h^{(j)} (t,x,y)$ for all $t>0$ și $x,y \in M$, whence, by corroborating the preceding results, we obtain that there exists a co-null $C \subseteq M$ such that $h_\nabla (t,x,y) = \lim _{j \to \infty} h_\nabla ^{(j)} (t,x,y)$ for all $t>0$, $x \in M$ and all $y \in C$. Since $h_\nabla$ and $h_\nabla ^{(j)}$ are smooth on $(0, \infty) \times M \times M$, and since $C$ is dense in $M$ by virtue of being co-null, an elementary argument shows that $C=M$.
\end{proof}

\end{document}